\documentclass[11pt,leqno]{amsart}
\usepackage[english,activeacute]{babel}
\usepackage{epsfig}
\usepackage{graphicx}
\RequirePackage{amsthm,amsmath,amsfonts,amssymb}
\RequirePackage[numbers,sort&compress]{natbib}
\RequirePackage{nicefrac}
\usepackage{hyperref}
\usepackage[utf8]{inputenc}
\usepackage[T1]{fontenc}
\usepackage{amsmath,amsfonts,amssymb,mathrsfs,amsthm}
\usepackage{graphicx}
\usepackage[mathscr]{eucal}
\usepackage{graphics,graphicx}
\usepackage{textcomp}
\usepackage[usenames, dvipsnames]{color}
\usepackage[pagewise]{lineno}

\usepackage{bbold}

\def\e{{\varepsilon}}

\newcommand\R{\mathbb{R}}
\newcommand\C{\mathbb{C}}
\newcommand\N{\mathbb{N}}
\newcommand\Z{\mathbb{Z}}
\newcommand\T{\mathbb{T}}

\newcommand\D{\mathcal{D}}
\newcommand\X{\mathcal{X}}

\newcommand{\A}{\mathcal A}
\newcommand{\B}{B}
\newcommand{\id}{\operatorname{I}}

\newcommand\bna{\begin{eqnarray*}}
\newcommand\ena{\end{eqnarray*}}

\newcommand\bnan{\begin{eqnarray}}
\newcommand\enan{\end{eqnarray}}

\newcommand\bnp{\begin{proof}}
\newcommand\enp{\end{proof}}

\newcommand\bneq{\begin{eqnarray*}\left\lbrace \begin{array}{rll}}
\newcommand\eneq{\end{array} \right.\end{eqnarray*}}
\newcommand\bneqn{\begin{eqnarray}\left\lbrace \begin{array}{rll}}
\newcommand\eneqn{\end{array} \right.\end{eqnarray}}

\newcommand\bni{\begin{itemize}}
\newcommand\eni{\end{itemize}}

\newcommand\nor[2]{\left\|#1\right\|_{#2}}

\renewcommand\d{\partial}

\newcommand\M{\mathcal{M}}

\def\SS{(e^{tA})_{t\geq 0}}

\newtheorem{theorem}{Theorem}[section]
\newtheorem{remark}{Remark}[section]

\newtheorem{lemma}{Lemma}[section]

\newtheorem{prop}{Proposition}[section]
\newtheorem{assumptions}{Assumptions}[section]
\newtheorem{definition}{Definition}[section]

\newtheorem{corollary}[lemma]{Corollary}
\newtheorem{assumption}[lemma]{Assumption}
\theoremstyle{definition}

\newtheorem{assum}{Assumption} 
\newenvironment{assump}[2][]
  {\renewcommand\theassum{#2}\begin{assum}[#1]}
  {\end{assum}}
\begin{document}

\title{Periodic solutions for weakly damped systems}

\author[C. Laurent]{Camille Laurent}
\address{CNRS UMR 9008, Universit\'e Reims-Champagne-Ardennes, Laboratoire de Math\'ematiques de Reims (LMR), Moulin de la Housse-BP 1039, 51687 REIMS cedex 2, France}
\email{camille.laurent@univ-reims.fr}

 \author[I. Rivas]{Ivonne Rivas}
 \address{Universidad del Valle, Departamento de Matem\'aticas, Cali, Colombia}
 \email{ivonne.rivas@correounivalle.edu.co}

\begin{abstract}
In this article, we investigate the existence and properties of time-periodic solutions for damped evolutionary partial differential equations subject to periodic forcing. Particular emphasis is placed on configurations where the energy decay of the corresponding free equation is non-uniform. We link the speed of decay and the existence of periodic solutions for the forced equation.  Furthermore, we characterize the relationship between the resolvent growth and the associated loss of regularity. The theoretical framework is illustrated through several examples, including linear and nonlinear damped wave equations and coupled hyperbolic-parabolic systems. Finally, we provide a counterexample demonstrating the occurrence of resonance, in which regular but unbounded solutions emerge despite damping.
\end{abstract}

\maketitle
 \begin{center}
 {\it \thanks{Dedicated to Jean-Michel Coron on the occasion of his seventieth birthday, with profound admiration, gratitude, and friendship.}}
\end{center}
\vspace{0.3cm}

\vspace{0.5cm}

\textbf{Keywords:}  periodic solutions, damped equations, wave equations, hyperbolic/parabolic coupled equations

\tableofcontents
\section{Introduction}
In this article, we study time-periodic solutions of evolutionary partial differential equations subject to periodic forcing and damping. The existence of periodic solutions is a classical problem with a long history, and it remains highly relevant from an applied perspective.\\

Historically, an important direction of research has concerned conservative systems, such as nonlinear wave equations. In some situations, the free linear equation naturally admits a large family of periodic solutions, and it is natural to ask whether this persists for the associated nonlinear equations. There is an extensive literature on these questions for one-dimensional nonlinear wave equations. The pioneering works of Rabinowitz~\cite{R:78}, Brézis-Nirenberg~\cite{BN:78}, Brézis-Coron-Nirenberg~\cite{BCN:80}, and Coron~\cite{C:83} established the existence of nontrivial periodic solutions of $\partial_t^2 u-\partial_x^2u=g(u)$ in various settings, and have been highly influential in subsequent research. The literature is vast; we mention only, in higher dimensions, the recent results of Chatzikaleas-Smulevici~\cite{CS:24} and Berti-Langella-Silimbani~\cite{BLS:25}, and refer the reader to the references therein.\\
\medskip

Another class of results concerns free systems that exhibit natural damping under the influence of periodic perturbations. From a physical perspective, the central inquiry is whether a time-periodic input necessarily induces a corresponding time-periodic response within the system. Formally, we seek to establish the conditions under which resonance phenomena, defined herein as the emergence of unbounded solutions derived from bounded inputs, can be rigorously excluded. Furthermore, it is conjectured that, when a periodic solution exists, it may serve as a global attractor; specifically, trajectories are expected to converge to this unique periodic orbit, thereby necessitating its detailed investigation.\\

The literature on this subject is extensive. In the parabolic framework, we note the foundational contributions of Lunardi \cite{L:88}, Galdi and Sohr \cite{GS:04}, and Galdi and Kyed \cite{GK:18}. For hyperbolic equations of the damped-wave type, we refer the reader to Haraux \cite[Chapter 8]{H:Book18} and the citations therein. While high-intensity damping typically yields robust existence results for forced periodic solutions, numerous physical configurations involve dissipation mechanisms that are insufficient to ensure decay in the natural norm. A comprehensive introduction to these challenges is provided by Galdi et al. \cite{GMZZ:14}, where the authors investigate coupled hyperbolic-parabolic systems. In such models, dissipation is restricted to the parabolic component, a configuration the authors term \emph{partially dissipative systems}. Notwithstanding this specific focus, a significant portion of the analysis in \cite{GMZZ:14} addresses more general abstract systems of the form
\begin{equation}\label{PP}
    \begin{cases}
         \dfrac{d u}{dt}(t) = Au(t) + f(t), \quad t \in [0,+\infty),\\
         u(0)=u_0,
    \end{cases}
\end{equation}
considered in a Banach space $\mathcal{X}$, where $A$ denotes the infinitesimal generator of the semigroup $(e^{At})_{t \geq 0}$. The authors focus on the existence of periodic solutions $u: [0, \infty) \to \mathcal{X}$ for the abstract linear evolution equation subject to a periodic forcing term $f: [0, \infty) \to \mathcal{X}$. Under the typical assumption that the system is partially damped, the semigroup $(e^{At})_{t \geq 0}$ is assumed to be contractive (or dissipative), such that $\Vert u(t) \Vert_{\mathcal{X}} \leq \Vert u_0 \Vert_{\mathcal{X}}$ for all $t \geq 0$. Among the abstract frameworks developed, the following primary theorems are established:

\begin{itemize}
    \item if the system is \emph{uniformly stable}, see Definition~\ref{uniformlyS}, then, for any $T>0$ and $f\in L^{1}_{per}([0,T],\X)$, there exists a unique $T$-periodic solution of \eqref{PP}. (see \cite[Theorem 4.4]{GMZZ:14} or Bostan~\cite{B:09}).
    \item if the system is \emph{strongly stable}, see Definition~\ref{stronglyS}, then, for any $T>0$, there exists a dense set $\mathcal{Q}\subset L^{1}_{per}([0,T],\X)$, so that for any $f\in \mathcal{Q}$, there exists a unique $T$-periodic solution of \eqref{PP}, see \cite[Theorem 4.2]{GMZZ:14}. 
\end{itemize}
Numerous configurations exist in which systems exhibit strong stability without attaining uniform stability; coupled hyperbolic-parabolic systems serve as a particularly salient example and constitute the primary focus of their investigation. In such instances, a characterization of the subset $\mathcal{Q}$ or at least the establishment of sufficient conditions for membership therein is of significant theoretical importance. In \cite{GMZZ:14}, this set is characterized only in an abstract functional setting. Subsequently, Mosny, Muha, Schwarzacher, and Webster \cite{MMSW:24} addressed this problem for a specific class of coupled hyperbolic-parabolic systems (refer to Section \ref{s:exampleheatwave} for the precise model). Their findings demonstrate that, under certain geometric constraints, there exists a functional space of source terms that generate periodic solutions. Notably, this framework entails a loss of regularity; specifically, the source terms must be smoother than what is strictly required to generate a solution to \eqref{PP}.\\

The purpose of the present article is to make a bridge between the problem of finding periodic solutions and some non-uniform stability estimates.  Additionally, after establishing the relation with the decay rate, we connect it to some resolvent estimates.\\

\medskip

Let us now present our results more precisely.  Throughout the article, we will make the following assumptions without repeating them. \\
\begin{assumptions}\label{assumptions} Let $\X$ be a Banach space, and let $A: D(A)\subset \X \mapsto \X$ denotes a linear operator, which is the infinitesimal generator of the strongly continuous $C^0$ semigroup $(S(t))_{t\geq 0}=\SS$.  We assume  $\SS$ is a uniformly bounded semigroup, that is, there exists $M>0$, so that $\nor{S(t)}{\mathcal{L}(\X)}\leq M$ for all $t\geq 0$. Furthermore, we  assume the imaginary axis $i\R$ is contained in the resolvent set $\rho(A)$ (we refer to Section~\ref{s:not} for detailed notations).\\
\end{assumptions}
\begin{remark}
    Under Assumption~\ref{assumptions}, the semigroup $\SS$ is known to be strongly stable, see Definition~\ref{stronglyS}.  Moreover, as detailed in Lemma~\ref{t:decay}, these conditions ensure the existence of a non-uniform decay rate converging to zero as in \eqref{e:decayabtrsinterp} below.
\end{remark}
In light of the assumptions on the semigroup $(S(t))_{t \geq 0}$, we obtain the following result concerning the existence of periodic solutions:
\begin{theorem}\label{t:interp}
Let $\alpha>0$ and $\SS$ satisfying 
\begin{align}
\label{e:decayabtrsinterp}
\nor{e^{At}u_{0}}{\X}\leq h(t)\nor{u_{0}}{\D(A)}, \quad \forall t\geq 0, \quad \forall u_0\in \D(A),
\end{align}
for $h:[0,+\infty)\mapsto \R_+ $ continuous decreasing so that $h^{\alpha}\in L^{1}([0,+\infty))$. Then, for any $T>0$, $f\in L^{1}_{per}([0,T],D((-A)^{\alpha}))$, there exists an initial data $w_0\in \X$ such that the solution $u_{per}$ of \eqref{PP} with $u_{per}(0)=w_0$ is the unique $T$-periodic solution of \eqref{PP}. Moreover, there exists a constant $C>0$ such that 
\begin{equation*}
    \nor{w_0}{\X}\leq C\nor{f}{L^{1}_{per}([0,T],\D((-A)^{\alpha}))}.
\end{equation*}
The same result holds if  instead, $f\in W^{\lceil \alpha\rceil,1}_{per,0}([0,T],\X)$.\\
Moreover, 
for any solution $u$ to \eqref{PP}, we have
\begin{align*}
 \nor{u(t)-u_{per}(t)}{\X}   \underset{t\to+\infty}{\longrightarrow}0.
\end{align*}
\end{theorem}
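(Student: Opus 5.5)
The plan is to write the periodic solution explicitly as a sum over periods and to use the decay hypothesis, upgraded by interpolation, to make that sum converge. By Duhamel, a solution of \eqref{PP} satisfies $u(T)=S(T)u(0)+\int_0^T S(T-s)f(s)\,ds$. It is therefore $T$-periodic exactly when $w_0=u(0)$ solves
\[
(\id-S(T))w_0=F:=\int_0^T S(T-s)f(s)\,ds .
\]
Formally $w_0=\sum_{k\ge0}S(kT)F$. Using the $T$-periodicity of $f$, this equals
\[
w_0=\int_0^{\infty}S(\sigma)f(-\sigma)\,d\sigma ,
\]
where $f$ is extended periodically to $\R$. Equivalently, $u_{per}(t)=\int_{-\infty}^t S(t-s)f(s)\,ds$. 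So the whole task is to show that this integral converges absolutely in $\X$.

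\textbf{Step 1: interpolated decay.} Since $0\in\rho(A)$, the norm $\nor{u}{\D(A)}$ is equivalent to $\nor{Au}{\X}$, and \eqref{e:decayabtrsinterp} reads $\nor{S(t)(-A)^{-1}}{\mathcal L(\X)}\le Ch(t)$. The trivial bound is $\nor{S(t)}{\mathcal L(\X)}\le M$. Interpolating between these two, via the moment inequality for fractional powers of the sectorial operator $-A$, should give
\[
\nor{S(t)(-A)^{-\beta}}{\mathcal L(\X)}\le C_\beta\, h(t)^{\beta},\qquad 0<\beta\le1 .
\]
For $\alpha>1$, write $\alpha=n+\beta$ and iterate using the semigroup property: $S(t)(-A)^{-n}=\bigl(S(t/n)(-A)^{-1}\bigr)^n$. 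This gives decay $h(t/n)^n$, and combining with the fractional part gives a bound of the form $h(t/(n+1))^{\alpha}$. Because $h$ is decreasing, $h(t/c)^{\alpha}$ is still integrable on $[0,\infty)$. I expect this step to be the main technical obstacle. The moment inequality for non-integer $\beta$ is delicate, and $h$ must be kept monotone under the rescaling. If the interpolation proves awkward, I would instead use the representation $(-A)^{-\beta}=c_\beta\int_0^\infty \lambda^{-\beta}(\lambda-A)^{-1}d\lambda$ and split the $\lambda$-integral at a threshold depending on $h(t)$.

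\textbf{Step 2: convergence and the bound on $w_0$.} Split the integral into periods:
\[
\nor{w_0}{\X}\le\sum_k\int_{kT}^{(k+1)T}\nor{S(\sigma)(-A)^{-\alpha}}{}\nor{(-A)^\alpha f(-\sigma)}{\X}\,d\sigma\le C\Bigl(\sum_k h(kT/c)^\alpha\Bigr)\nor{f}{L^1_{per}(\D((-A)^\alpha))} .
\]
The series converges because $h$ is decreasing and $h^\alpha\in L^1$, so its terms are dominated by $h^\alpha(0)$ plus the integral. This yields the estimate on $w_0$, and a direct check shows that $u_{per}$ is a mild $T$-periodic solution.

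\textbf{Step 3: uniqueness, convergence, and the time-regular case.} For uniqueness and attraction, let $u$ be any solution. Then $u-u_{per}$ solves the free equation, so $u(t)-u_{per}(t)=S(t)(u_0-w_0)$, which tends to $0$ by strong stability (Remark after Assumption~\ref{assumptions}). In particular two periodic solutions would have a difference that is both periodic and tending to zero, hence identically zero.

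For the case $f\in W^{\lceil\alpha\rceil,1}_{per,0}$, set $m=\lceil\alpha\rceil\ge\alpha$. Integrate by parts $m$ times in $\int_0^\infty S(\sigma)f(-\sigma)d\sigma$, using $S(\sigma)=\frac{d}{d\sigma}\bigl[S(\sigma)A^{-1}\bigr]$. Boundary terms vanish at $\infty$ by the decay and at $0$ by the zero-mean/vanishing condition encoded in the subscript $0$; the natural route is to take primitives of $f$, which stay periodic because of the mean-zero condition. This converts the integral to $\pm\int_0^\infty S(\sigma)A^{-m}f^{(m)}(-\sigma)\,d\sigma$. Its convergence follows from Step 1 with $\beta=m$, since $h^m\le h(0)^{m-\alpha}h^\alpha$ is integrable.
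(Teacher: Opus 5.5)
Your proof is correct, and its skeleton matches the paper's. Both use the Neumann series $w_0=\sum_{k\ge0}S(kT)F$; your integral $\int_0^\infty S(\sigma)f(-\sigma)\,d\sigma$ is the same object. Both get decay on $\D((-A)^{\alpha})$ at rate $h(t/\lceil\alpha\rceil)^{\alpha}$ by iteration plus interpolation, summability from monotonicity of $h$, and uniqueness and attraction from strong stability as in Proposition~\ref{p:convuniq}. Two steps are implemented differently.

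\emph{Fractional decay.} The paper iterates up to $m=\lceil\alpha\rceil$ (Lemma~\ref{lm:iterpower}) and then uses Komatsu's real interpolation between $\X$ and $\D((-A)^m)$ (Lemmas~\ref{lm:inter} and~\ref{claimalpha}). You apply the moment inequality to $y=S(t)(-A)^{-1}x$, giving $\nor{S(t)(-A)^{-\beta}}{\mathcal{L}(\X)}\le Ch(t)^{\beta}$ for $\beta\in(0,1]$, and then factor $(-A)^{-\alpha}$. This is more elementary and gives the same rate.

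\emph{Time-regular case.} Here the routes genuinely differ. The paper integrates by parts over one period to place $F_T$ in $\D(A^{\lceil\alpha\rceil})$ (Lemma~\ref{lm:gainder}, with its density hypothesis), embeds this into $\D((-A)^{\alpha})$, and invokes Theorem~\ref{t:variant}. You integrate by parts on $[0,R]$, let $R\to\infty$, and only use the integer-power decay $h(t/m)^m\le h(0)^{m-\alpha}h(t/m)^{\alpha}$.

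Your aside about \emph{zero mean} and \emph{primitives} is misplaced. In \eqref{e:defper0} the subscript $0$ means $f^{(j)}(0)=f^{(j)}(T)=0$ for $j\le m-1$, and your integration by parts differentiates $f$ rather than integrating it.

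In fact your route does not need these vanishing conditions at all. The boundary terms at $\sigma=0$ are $\pm A^{-1-j}f^{(j)}(0)$, which are bounded in $\X$ by $C\nor{f}{W^{m,1}_{per}([0,T],\X)}$ whether or not they vanish. So you obtain the result for every $f\in W^{m,1}_{per}([0,T],\X)$, which the paper only claims under the extra assumptions of Lemma~\ref{lm:gainder2}. The constant forcing $f\equiv y$ shows that requiring $F_T\in\D(A^m)$ is an artifact: its stationary solution $-A^{-1}y$ exists, while $F_T=A^{-1}(S(T)-\id)y\notin\D(A^2)$ in general. In that generality the original integral need not converge absolutely. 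So define $w_0=\lim_N\sum_{k<N}S(kT)F$ and read off periodicity from $(\id-S(T))\sum_{k<N}S(kT)F=(\id-S(NT))F\to F$.

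What the paper's detour through $F_T\in E_2$ buys is modularity. The same Theorem~\ref{t:variant} then handles admissible control operators (Theorem~\ref{t:variantop}) and the nonlinear fixed point of Section~\ref{s:nonlin}.
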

We refer to \eqref{e:defper0} for the precise definition of the space $W^{k,1}_{per,0}([0,T],\X)$. Although this represents a simplification of the conditions, the result holds for $W^{k,1}_{per}([0,T],\X)$ under further weak assumptions.\\

In particular, for  the case of polynomial decreasing functions, we obtain the following Corollary. 
\begin{corollary}
\label{cor:decaypol}
    Under the assumptions of the previous Theorem~\ref{t:interp}, define 
    \begin{equation*}
       \beta_0:= \sup \left\{\beta\in \R_+; \ \exists C_{\beta}>0  \textnormal{  that \eqref{e:decayabtrsinterp} holds for }h(t)=\frac{C_{\beta}}{ t^{\beta}} \ \right\}\in [0,+\infty].  
    \end{equation*}
 If $\beta_0\neq 0$. Then, for either
\begin{itemize}
    \item $f\in L^1([0,T],\D((-A)^{\alpha}))$ for some $\alpha>1/\beta_0$,
    \item $f\in W^{k,1}_{per,0}([0,T],\X)$ for some $k\in \N$, $k>1/\beta_0$,
\end{itemize}
 the conclusion of Theorem~\ref{t:interp} holds.
\end{corollary}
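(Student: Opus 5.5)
This corollary should follow directly from Theorem~\ref{t:interp} by picking a polynomial $h$. Since $\beta_0\neq 0$, either $\beta_0$ is a positive real or $\beta_0=+\infty$. In both cases, given $\alpha>1/\beta_0$ (with $1/\beta_0=0$ if $\beta_0=+\infty$), we have $1/\alpha<\beta_0$. By the definition of $\beta_0$ as a supremum, we can therefore choose $\beta$ with $1/\alpha<\beta<\beta_0$ (or $\beta>1/\alpha$ arbitrary if $\beta_0=+\infty$). We then need \eqref{e:decayabtrsinterp} to hold with $h(t)=C_\beta t^{-\beta}$. The set of admissible exponents is downward closed for $t\geq 1$, since $t^{-\beta'}\le t^{-\beta}$ when $\beta'\geq \beta$. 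So there is some admissible $\beta'\ge\beta$, and the bound with exponent $\beta$ follows on $[1,+\infty)$.

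The remaining issue is that $t^{-\beta}$ is singular at $t=0$, whereas Theorem~\ref{t:interp} asks for $h$ continuous and decreasing on $[0,+\infty)$. We fix this using the uniform bound from Assumption~\ref{assumptions}: $\nor{e^{At}u_0}{\X}\le M\nor{u_0}{\X}\le M'\nor{u_0}{\D(A)}$. Set
\begin{equation*}
h(t)=\min\bigl(M',\,C_\beta t^{-\beta}\bigr),
\end{equation*}
with the convention $h(0)=M'$. This $h$ is continuous, nonincreasing and bounded, and \eqref{e:decayabtrsinterp} holds for it. If strict monotonicity is needed, we may replace $h$ by $h(t)+e^{-t}$, which only enlarges the bound. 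Then $h^\alpha$ is bounded near $0$ and behaves like $C t^{-\alpha\beta}$ at infinity. Since $\alpha\beta>1$, we get $h^\alpha\in L^1([0,+\infty))$.

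With this $h$, Theorem~\ref{t:interp} applies to $f\in L^1_{per}([0,T],\D((-A)^\alpha))$ and gives the first item. For the second item, take $k\in\N$ with $k>1/\beta_0$ and apply the same construction with $\alpha=k$. Then $\lceil\alpha\rceil=k$, and the $W^{k,1}_{per,0}$ version of Theorem~\ref{t:interp} gives the conclusion.

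The only delicate step is making $h$ admissible near $t=0$, since a bare power $t^{-\beta}$ is neither defined nor integrable there. Truncating by the uniform bound $M$ handles this. Everything else is choosing $\beta$ strictly between $1/\alpha$ and $\beta_0$.
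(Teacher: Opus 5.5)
Your proof is correct and follows essentially the paper's argument: choose an exponent strictly between $1/\alpha$ and $\beta_0$, regularize the power near $t=0$ using the uniform bound (the paper uses $C\langle t\rangle^{-\gamma}$ rather than your truncation $\min(M',C_\beta t^{-\beta})$, which also avoids the $e^{-t}$ fix for strict monotonicity), and apply Theorem~\ref{t:interp}, taking $\alpha=k$ for the second item. One small point: you only derive the power bound on $[1,+\infty)$, so you need to take $C_\beta\geq \max(M',C_{\beta'})$, so that the minimum equals $M'$ on $(0,1]$ and the bound is valid there.
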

It is well known by the works of Lebeau~\cite{L:96}, Batty-Duyckaerts~\cite{BD:08} and Borichev-Tomilov~\cite{BT:10} that the rate of decay of sufficiently smooth orbits for the corresponding semigroup $\SS$  can be associated with the size of the resolvent of $A$, $R(\lambda,
A)=(\lambda \id-A)^{-1}$,  with $\lambda$ on the imaginary axis.  This leads to the following result, establishing a connection between the resolvent bound and the loss of derivatives for the periodic source term.
\begin{theorem}
 \label{t:perresolv}
Assume that, for some $\alpha>0$, we have the resolvent bound $$\nor{R(i\eta ,
A)}{\mathcal{L}(\X)} \le C (1+|\eta|^{\alpha}), \quad \text{ for all} \quad \eta\in \R.$$
Then, for either 
\begin{itemize}
    \item $f\in L^1_{per}([0,T],\D((-A)^{\beta}))$, for some $\beta>\alpha$,
    \item $f\in W^{k,1}_{per,0}([0,T],\X)$, for some $k\in \N$, $k>\alpha$,
\end{itemize}
the conclusion of Theorem~\ref{t:interp} holds.
\end{theorem}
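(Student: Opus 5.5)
Theorem~\ref{t:perresolv} will follow from Theorem~\ref{t:interp} once a resolvent bound is converted into a decay rate. The tool for this is the Borichev--Tomilov theorem \cite{BT:10}, together with its Banach space analogue by Batty--Duyckaerts \cite{BD:08} (up to a logarithmic loss). These say that a uniformly bounded semigroup with $i\R\subset\rho(A)$ and $\nor{R(i\eta,A)}{\mathcal{L}(\X)}\le C(1+|\eta|^{\alpha})$ satisfies the decay estimate \eqref{e:decayabtrsinterp}.

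The admissible rate depends on the setting.
\begin{itemize}
\item If $\X$ is a Hilbert space, Borichev--Tomilov gives $h(t)=Ct^{-1/\alpha}$.
\item In a general Banach space, Batty--Duyckaerts gives $h(t)=C\left(\frac{\log t}{t}\right)^{1/\alpha}$ for large $t$.
\end{itemize}
In both cases $h(t)\le C_\varepsilon t^{-1/\alpha+\varepsilon}$ for every $\varepsilon>0$. Hence the quantity $\beta_0$ of Corollary~\ref{cor:decaypol} satisfies $\beta_0\ge 1/\alpha$, and in particular $\beta_0\neq 0$.

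Applying Corollary~\ref{cor:decaypol} then gives the first alternative for any $\beta>1/\beta_0$, and this range includes every $\beta>\alpha$ since $\alpha\ge 1/\beta_0$. Equivalently, one can apply Theorem~\ref{t:interp} directly. For $\beta>\alpha$ we have $h^{\beta}(t)\lesssim t^{-\beta/\alpha}(\log t)^{\beta/\alpha}$, which is integrable at infinity. Near $t=0$ we replace $h$ by $\min(M',h)$, using uniform boundedness together with $\nor{u_0}{\X}\lesssim \nor{u_0}{\D(A)}$. The modified $h$ is continuous and decreasing, and $h^{\beta}\in L^1$. This yields the $L^1_{per}([0,T],\D((-A)^{\beta}))$ statement with the norm bound on $w_0$ and convergence to $u_{per}$.

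For the second alternative, the same $h$ with $k>\alpha$ gives $h^{k}\in L^1$, and the $W^{k,1}_{per,0}$ part of Theorem~\ref{t:interp} applies with $\alpha$ replaced by $k$.

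The only delicate step is quoting the resolvent-to-decay result correctly. Its hypotheses are a bounded semigroup, $i\R\subset\rho(A)$ and polynomial resolvent growth, all of which hold under Assumption~\ref{assumptions} and the stated bound. We must also check that the Banach-space logarithmic loss is harmless, which it is because of the strict inequalities $\beta>\alpha$ and $k>\alpha$.

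A more self-contained alternative would be to write the periodic solution through Fourier series in time, $\hat u_n=R(2\pi i n/T,A)\hat f_n$. However, this only gives summability in $n$ in Hilbert settings (via Plancherel) and would need more care for $L^1$-in-time data. So I would go through the decay route above.
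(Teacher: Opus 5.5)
Your proposal is correct and takes essentially the same route as the paper. The paper also invokes Batty--Duyckaerts to get $\nor{e^{At}A^{-1}}{\mathcal{L}(\X)}\lesssim (\log t/t)^{1/\alpha}$. It absorbs the logarithm by picking $\gamma\in(\alpha,\beta)$ and taking $h(t)=\left<t\right>^{-1/\gamma}$, which also sidesteps your slightly informal $\min(M',h)$ patch near $t=0$, and then applies Theorem~\ref{t:interp}. Your route through $\beta_0\ge 1/\alpha$ in Corollary~\ref{cor:decaypol} is the same argument packaged differently, and it covers both bullets.
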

The theorems obtained here are, in fact, broader in scope, accommodating source terms in more versatile spaces, that we call admissible, and define in Section~\ref{s:admiss}. Motivated by applications to non-homogeneous periodic boundary forcing, we also consider source terms of the form $B g$, where $B$ is an admissible operator as commonly defined in control theory. We refer the reader to Theorem~\ref{t:variant} for a more comprehensive treatment of general source terms, and to Theorem~\ref{t:variantop} for results specifically suited to boundary control.\\

It is also possible to obtain some nonlinear results for small source terms as it is stated in the abstract result in Theorem~\ref{t:nonlin} in Section~\ref{s:nonlin}.   After all these abstract results, we give several applications in Section~\ref{s:example}:\\

\begin{itemize}
    \item \textbf{Damped Wave equation}: in Section~\ref{s:examplewave}, we give applications to the damped wave equation $$\partial_t^2 u-\Delta_g u+a(x)\partial_t u=0,$$ where various results are available, depending on the geometric setting. The decay is known to be enhanced when the trapped set (i.e., the set of geodesics avoiding the active damping zone) is meager and exhibits instability.  In our context, it means that when the trapped set is "small" and "unstable", the loss of regularity for the source term $f$ is smaller.  We refer to Theorem~\ref{t:dampedwave} for a more detailed statement.  We also prove some nonlinear results for a small forcing term in Theorem~\ref{t:wavenonlin}.
    \item \textbf{Damped Wave equation with non homogeneous boundary forcing}: in Section~\ref{s:examplewaveboundary}, we give some applications to the damped wave equation with some forcing term coming from a non homogeneous boundary value problem.  More precisely, we consider the damped wave equation $$\partial_t^2 u-\Delta_g u+a(x)\partial_t u=0,$$ subject to the nonhomogeneous Dirichlet boundary condition  
    $$u=f,\quad \textnormal{ on } \R_+\times \d\Omega,$$
    where $f$ is periodic.  We refer to Theorem~\ref{t:periodboundary} for the details.
   
    \item \textbf{Coupled parabolic/hyperbolic systems:} in Section~\ref{s:exampleheatwave}, we study some systems of coupled heat/wave equation 
    \bneq
\partial_t u-\Delta_g u=&0,\quad &\textnormal{ on }[0,T]\times \Omega_H\\
\partial_t^2 w-\Delta_g w=&0,\quad &\textnormal{ on }[0,T]\times \Omega_W,
\eneq
where the two solutions are coupled by some boundary conditions on an interface $\gamma$ where $\partial_t w=u$ and $\partial_{\nu_1} u=-\partial_{\nu_2}  w$.  In particular, the damping is only induced by the heat equation.  This decay shall be transferred to the wave equation by the boundary coupling which requires some geometric assumptions.  The result is stated in Theorem~\ref{t:perheatwave} after a more detailed introduction of the system in Section~\ref{s:exampleheatwave}
\end{itemize}
The study of the existence of a periodic solution $u$ is crucial to understand the asymptotic behavior of solutions of \eqref{PP}, since any solution $v$ to the same equation $\partial_{t}v=Av+f$ converge to this one, see Proposition \ref{p:convuniq} below.  But, we could wonder if the decay rate that we are asking, that is merely an integrable decay, is necessary. 
\medskip
We give one counterexample where there is a very weak decay, that is logarithmic.  The following theorem can be found in Laurent-L\'eautaud \cite{LL:21hypo} \footnote{The result is actually stated in a more general context of operators "sum of square" with analytic coefficients.  Yet, the abstract result allows to transfer this decay from resolvent estimates with observations that are well known without the analyticity for elliptic operators.  We refer to the comments in \cite[Theorem 1.5]{LL:21hypo}.}.
\begin{theorem}\cite[Theorem 1.5]{LL:21hypo}
~Let $\M$ be a compact manifold and $a\in L^{\infty}(\M,\R_+)$ is such that $a\geq  \delta  > 0$ a.e. on a
nonempty open set.  Then, any solution of 
\bneqn
\label{dampedSchrod}
i\partial_t u-\Delta_g u+ia(x)u&=&0,\\
u(0)&=&u_0\in L^2(\M),
\eneqn
satisfies $u(t)\underset{t\to +\infty}{\longrightarrow}0$ in $L^2(\M)$. If moreover for some $k\in \N^*$, $u_0\in \D(\mathcal{A}_S^k)$ with $\mathcal{A}_S=-i\Delta_g-a(x)$, then we have
\begin{equation*}
 \nor{u(t)}{L^2(\M)} \leq \frac{C_k}{\left[\log(t+2)\right]^{2k}} \nor{\mathcal{A}_S^k u_0}{L^2(\M)}.
\end{equation*}
\end{theorem}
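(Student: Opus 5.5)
The plan follows the standard route from resolvent estimates with observation to decay rates, which is the route of \cite{LL:21hypo}.

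\textbf{Strong stability.} The operator $\mathcal{A}_S=-i\Delta_g-a(x)$ on $L^2(\M)$ is maximal dissipative. Indeed,
\[
\mathrm{Re}\,(\mathcal{A}_S u,u)=-\int_\M a|u|^2\leq 0,
\]
and $\mathcal{A}_S$ is a bounded perturbation of the skew-adjoint operator $-i\Delta_g$. So it generates a contraction semigroup, and Assumptions~\ref{assumptions} hold with $M=1$. The resolvent of $\mathcal{A}_S$ is compact, so its spectrum consists of eigenvalues. Suppose $\mathcal{A}_S u=i\eta u$ with $\eta\in\R$. Taking the real part of the inner product with $u$ gives $\int a|u|^2=0$. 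Hence $u=0$ on the open set $\omega$ where $a\geq\delta$, and $-\Delta_g u=\eta u$ on $\M$. Unique continuation for second-order elliptic operators then forces $u\equiv 0$. Therefore $i\R\subset\rho(\mathcal{A}_S)$, and the Arendt--Batty--Lyubich--V\~u theorem (or the Remark following Assumptions~\ref{assumptions}) gives $u(t)\to 0$ in $L^2$ for every $u_0\in L^2$.

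\textbf{Resolvent bound.} For the rate, the key input is the quantitative unique continuation estimate of Lebeau--Robbiano type for the Laplacian on a compact manifold. For any $u\in H^2$ and $\lambda\geq 0$ it reads
\[
\nor{u}{L^2(\M)}\leq Ce^{C\sqrt{\lambda}}\left(\nor{(-\Delta_g-\lambda)u}{L^2}+\nor{u}{L^2(\omega)}\right).
\]
We convert this into a resolvent bound for $\mathcal{A}_S$. Set $(i\eta-\mathcal{A}_S)u=F$, i.e.\ $i(\eta+\Delta_g)u+au=F$.
\begin{itemize}
\item Pairing with $u$ and taking real parts gives $\int a|u|^2\le \nor{F}{}\nor{u}{}$, which controls $\nor{u}{L^2(\omega)}^2\le \delta^{-1}\nor{F}{}\nor{u}{}$.
\item The elliptic error is $(-\Delta_g-\eta)u=-iF-i\,au$. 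The term $au$ is bounded by $\nor{\sqrt a u}{}\lesssim(\nor{F}{}\nor{u}{})^{1/2}$, using that $a$ is bounded.
\item For $\eta<0$ the operator $-\Delta_g-\eta$ is positive, so the bound there is trivial.
\end{itemize}
Absorbing the cross terms with Young's inequality should yield
\[
\nor{R(i\eta,\mathcal{A}_S)}{\mathcal{L}(L^2)}\leq Ce^{C\sqrt{|\eta|}},\qquad \eta\in\R.
\]

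\textbf{Decay rate.} Next we apply Batty--Duyckaerts \cite{BD:08}, or more precisely its $\mathcal{D}(A^k)$ version, with $M(\eta)=Ce^{C\sqrt{|\eta|}}$. Its inverse is $M_{\log}^{-1}(t)\sim c(\log t)^2$. This gives
\[
\nor{e^{t\mathcal{A}_S}\mathcal{A}_S^{-k}}{}\lesssim \left(\frac{1}{\log t}\right)^{2k}.
\]
This is exactly the claim, since $0\in\rho(\mathcal{A}_S)$ makes $\nor{\mathcal{A}_S^k u_0}{}$ equivalent to the $\D(\mathcal{A}_S^k)$ norm. The power $2k$ comes from iterating the $k=1$ estimate via the semigroup property, splitting $t$ into $k$ equal parts.

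\textbf{Main obstacle.} The main obstacle is the quantitative unique continuation with the precise $e^{C\sqrt\lambda}$ cost for merely $L^\infty$ damping. It is available only through Carleman estimates (Lebeau--Robbiano, or the interpolation inequalities for $\partial_s^2+\Delta_g$). Getting the $\sqrt{\lambda}$ rather than $\lambda$ exponent is what produces the $(\log t)^{2}$ rather than $\log t$ rate. I would cite this estimate, together with the abstract transfer from observability to the resolvent in \cite{LL:21hypo}, rather than reprove it.
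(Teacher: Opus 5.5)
Your argument is correct and follows the route the paper points to. The paper gives no proof beyond citing \cite[Theorem 1.5]{LL:21hypo}, whose abstract transfer turns the classical elliptic observation estimate with cost $e^{C\sqrt{\lambda}}$ into decay, and you reconstruct that transfer by hand (observation estimate $\Rightarrow$ damped resolvent bound $Ce^{C\sqrt{|\eta|}}$ $\Rightarrow$ Batty--Duyckaerts $\Rightarrow$ iteration via $S(t)\mathcal{A}_S^{-k}=(S(t/k)\mathcal{A}_S^{-1})^k$), with only a harmless sign slip: the correct identity is $(-\Delta_g-\eta)u=iF-i\,au$.
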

Theorem~\ref{t:interp} cannot be applied in this case since $\left[\log(2+t)\right]^{-\alpha}$ is never integrable on $\R_+$. The next result shows that the conclusion of Theorem~\ref{t:interp} is indeed false in this situation.  There are a lot of resonant forcing terms, as regular as we want, so that the resulting solutions are always unbounded.  
\begin{theorem}
\label{t:counter}
Let $\M=\mathbb{S}^2$ and $a\in C^{\infty}(\M,\R_+)$ so that $a=0$ in a neighborhood of one equator.
Then, for $T=2\pi$ and any $k\in \N$, there is a $G_{\delta}$ dense subset $G$ of $L^1_{per}([0,T],H^k(\M))$ such that for any $f\in G$, the solution of 
\bneqn
\label{dampedSchrodper}
i\partial_t u-\Delta_g u+ia(x)u&=&f,\\
u(0)&=&0,
\eneqn
satisfies that the sequence $\left(\nor{u(nT)}{L^2}\right)_{n\in \N}$ is unbounded. In particular, for any $v_0\in L^2(\M)$ and $f\in G$, the solution of 
\bneqn
\label{dampedSchrodperu0}
i\partial_t v-\Delta_g v+ia(x)v&=&f,\\
v(0)&=&v_0,
\eneqn
is unbounded in $L^2(\M)$ and therefore not periodic. 
\end{theorem}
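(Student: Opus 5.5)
The plan is to combine the Banach--Steinhaus (uniform boundedness) principle with an explicit family of resonant quasimodes concentrating on the undamped equator. For $n\in\N$, consider the linear map
\begin{equation*}
L_n : L^1_{per}([0,T],H^k(\M))\to L^2(\M),\qquad L_n f := u(nT)=\int_0^{nT} S(nT-s)f(s)\,ds,
\end{equation*}
where $S(t)=e^{t\mathcal{A}_S}$ is the semigroup generated by \eqref{dampedSchrod}. It is a contraction semigroup since $a\geq 0$. Each $L_n$ is bounded, with $\nor{L_n}{}\leq n$ up to the constant of the embedding $H^k\subset L^2$. By the resonance form of Banach--Steinhaus, if $\sup_n \nor{L_n}{}=+\infty$, then the set
\begin{equation*}
G=\{f;\ \sup_n\nor{L_nf}{L^2}=+\infty\}=\bigcap_{m}\bigcup_n\{f;\ \nor{L_nf}{L^2}>m\}
\end{equation*}
is a dense $G_\delta$. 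So the whole statement about $u$ reduces to exhibiting $f_\ell$ with $\nor{L_{n_\ell}f_\ell}{L^2}/\nor{f_\ell}{L^1H^k}\to+\infty$ along some sequence $n_\ell$.

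\textbf{Construction of the resonant data.} After a rotation, assume the equator is $\{x_3=0\}$ and $a=0$ on $\{|x_3|<\delta\}$. Take the highest weight spherical harmonics $Y_\ell=c_\ell(x_1+ix_2)^\ell$, normalized in $L^2$. They satisfy $-\Delta_g Y_\ell=\lambda_\ell Y_\ell$ with $\lambda_\ell=\ell(\ell+1)\in\N$. Since $|Y_\ell|^2=c_\ell^2(1-x_3^2)^\ell$ and $c_\ell$ grows only polynomially, we get $\nor{aY_\ell}{L^2}\leq C e^{-c\ell}$ for some $c>0$.

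Set $f_\ell(t)=e^{i\lambda_\ell t}Y_\ell$. It is $2\pi$-periodic because $\lambda_\ell$ is an integer; this is exactly where $T=2\pi$ is used. It satisfies $\nor{f_\ell}{L^1H^k}\leq C\ell^{2k}$. A direct computation shows that, without damping, the solution of \eqref{dampedSchrodper} with zero initial data is the secular solution
\begin{equation*}
u^0_\ell(t)=-it e^{i\lambda_\ell t}Y_\ell,\qquad \nor{u^0_\ell(t)}{L^2}=t.
\end{equation*}

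\textbf{Comparison with the damped solution.} Let $u_\ell$ be the damped solution. The difference $w=u_\ell-u^0_\ell$ solves $i\partial_tw-\Delta_gw+iaw=-iau^0_\ell$ with $w(0)=0$. Since the semigroup is a contraction, the Duhamel formula gives
\begin{equation*}
\nor{w(t)}{L^2}\leq\int_0^t s\,\nor{aY_\ell}{L^2}\,ds\leq Ct^2e^{-c\ell}.
\end{equation*}
Hence $\nor{L_nf_\ell}{L^2}\geq nT-C(nT)^2e^{-c\ell}$. Choosing $n_\ell\approx \varepsilon e^{c\ell}$ with $\varepsilon$ small gives $\nor{L_{n_\ell}f_\ell}{L^2}\gtrsim e^{c\ell}$. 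The ratio is then at least $e^{c\ell}\ell^{-2k}\to\infty$, so $\sup_n\nor{L_n}{}=\infty$ and $G$ is a dense $G_\delta$.

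\textbf{The ``in particular'' part.} For $v$ solving \eqref{dampedSchrodperu0}, the difference $v-u=S(t)v_0$ satisfies $\nor{v(t)-u(t)}{L^2}\leq\nor{v_0}{L^2}$. Therefore $v$ is unbounded along $t=nT$ whenever $u$ is, and in particular $v$ is not periodic.

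The only delicate step is the quantitative comparison. We need the exponential smallness of $aY_\ell$ to beat both the quadratic error growth $t^2$ and the polynomial loss $\ell^{2k}$ from the $H^k$ norm; the Gaussian-beam concentration of $(x_1+ix_2)^\ell$ provides exactly this.
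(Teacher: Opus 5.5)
Your argument is correct and is essentially the paper's proof. The paper runs the resonance form of Banach--Steinhaus by hand, through the closed sets $F_{m,n}=\{f:\ \|u_f(2\pi n)\|_{L^2}\leq m\|f\|\}$ and Baire, and gets $\sup_n\|L_n\|=\infty$ from the normalized equatorial harmonics $(x_1+ix_2)^j$ via the same Duhamel comparison with the undamped flow, using only that the damped semigroup is a contraction.

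The one difference is the choice of forcing. The paper takes $f(s)=C_je^{(s-T)\mathcal{A}}\Phi_{\lambda_j}/T$, so that the one-period Duhamel term is exactly $C_j\Phi_{\lambda_j}$, and then compares $\sum_{m<n}e^{mT\mathcal{A}}\Phi_{\lambda_j}$ with $n\Phi_{\lambda_j}$; this needs $H^k$ bounds on the backward damped group, hence smoothness of $a$. Your free resonant forcing $e^{i\lambda_\ell t}Y_\ell$ has an explicit $H^k$ norm and is compared directly with the secular solution $-ite^{i\lambda_\ell t}Y_\ell$, so your version works for any $a\in L^\infty$, $a\geq 0$, vanishing near the equator.
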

Our results demonstrate the existence of unbounded solutions for certain source terms in any Sobolev space.  However, this does not preclude the possibility that analytic periodic forcing terms might yield periodic solutions.  And the result would be consistent with the exponential growth of the resolvent, see \cite[Theorem 1.6]{LL:21hypo}.  Furthermore, we establish the existence of resonant solutions only for the fundamental period $T=2\pi$, which corresponds to the period of the undamped equation. It remains possible that alternative periods could lead to bounded solutions.  
\section{Admissibility}
\label{s:admiss}
In this section, we will define several concepts related to admissible source terms or admissible operators.  They will include the situation of Theorem~\ref{t:interp}, but might be more general, and it could be useful in other situations.
\subsection{Admissible source terms}
\begin{definition}\label{def:admiss}Let $E_2\subset \X$ be a Banach space and $T>0$. We say that a Banach  space $W \subset  L^1_{per}([0,T],\X)$ is admissible for $\SS$ and $E_2$, if there exists $C>0$ so that  for any $f\in W$, $\displaystyle\int_0^T e^{A(T-s)}f(s)ds\in E_2$ together with the estimate
    \begin{align}
\label{ineqfadmiX}
 \nor{ \int_0^T e^{A(T-s)}f(s)ds  }{E_2}\leq C \nor{f}{W}, \quad \forall f\in W.
\end{align}
\end{definition}
We will often write for simplicity "admissible for $E_2$" when the semigroup is implicit in the context. 
\begin{remark}
    The assumption $W \subset  L^1([0,T],\X)$ is only technical here. It allows us to show that the Cauchy problem with forcing $f\in W$ is well posed by standard semigroup theory.  It could be avoided in some cases where the equation $\partial_t u=Au+f$ is known to be well posed in $\X$ for $f\in W$.  This holds, for example, when the semigroup admits some regularizing effect.  For instance, for the heat equation on a bounded domain with Dirichlet boundary conditions, a source term in $L^2([0,T],H^{-1}(\Omega))$ allows us to construct weak solutions in $L^{\infty}([0,T],L^2(\Omega))$.  Our method would certainly apply in this context, but describing it in full generality is more technical.  We leave the details to the reader interested in such a context.
\end{remark}
Definition~\ref{def:admiss} is made useful by the following immediate property.
\begin{lemma}
\label{lm:L1E2admiss}
 Let $E_2\subset \X$ be a Banach space.   Assume that we have the inequality 
\begin{align}
\label{contE_2}
    \nor{e^{As}y}{E_2}\leq C\nor{y}{E_2}, \quad \forall s\in [0,T],\ \forall y\in E_2.
\end{align}
Then, $L^1_{per}([0,T],E_2)$ is admissible for $E_2$.
\end{lemma}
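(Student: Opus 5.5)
The plan is a direct Bochner-integral estimate, with the only delicate points being measurability and the fact that the integral converges in $E_2$ and not merely in $\X$. We take $W=L^1_{per}([0,T],E_2)$ with its natural norm $\nor{f}{L^1([0,T],E_2)}$. This is a Banach space, and since $E_2\subset \X$ continuously, it is a subspace of $L^1_{per}([0,T],\X)$. The implicit assumption is that the inclusion $E_2\hookrightarrow\X$ is continuous, which is the standing meaning of ``Banach space $E_2\subset\X$''.

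Fix $f\in W$. First, the integrand $s\mapsto e^{A(T-s)}f(s)$ is Bochner integrable in $\X$. Indeed, it is strongly measurable, since $f$ is a limit of simple functions and $e^{A(T-s)}$ is strongly continuous. Its norm is bounded by $M\nor{f(s)}{\X}\le MC'\nor{f(s)}{E_2}$, which is integrable. So $I(f)=\int_0^T e^{A(T-s)}f(s)\,ds$ is well defined in $\X$. Hypothesis \eqref{contE_2} gives the pointwise bound
\begin{equation*}
\nor{e^{A(T-s)}f(s)}{E_2}\leq C\nor{f(s)}{E_2}\quad \text{for a.e. } s\in[0,T].
\end{equation*}

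Second, we show that $I(f)\in E_2$ with $\nor{I(f)}{E_2}\le C\nor{f}{L^1([0,T],E_2)}$. We use density. For simple $E_2$-valued $f=\sum_j \mathbb{1}_{B_j}y_j$ with $y_j\in E_2$, we have $I(f)=\sum_j \int_{B_j} e^{A(T-s)}y_j\,ds$. The required bound then holds for each such piece; in fact it suffices to use Riemann sums for step functions if strong continuity in $E_2$ is unavailable. For general $f$, we take simple $f_n\to f$ in $L^1([0,T],E_2)$. Then $I(f_n)$ is Cauchy in $E_2$ by linearity and the bound, so it converges in $E_2$ to some limit. It also converges to $I(f)$ in $\X$ by continuity of $E_2\hookrightarrow\X$, so the two limits agree. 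Hence $I(f)\in E_2$ with $\nor{I(f)}{E_2}\le C\nor{f}{W}$, which is exactly \eqref{ineqfadmiX}.

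The main obstacle is the step for simple functions, because $s\mapsto e^{A(T-s)}y$ need not be measurable or continuous as an $E_2$-valued map. The fallback is a weak formulation. When $E_2$ is, say, reflexive or a dual space, or more generally when its closed balls are closed in $\X$, one approximates $\int_{B_j}e^{A(T-s)}y_j\,ds$ in $\X$ by Riemann-type sums. These sums are bounded in $E_2$ by $C|B_j|\nor{y_j}{E_2}$, and closedness of balls then passes the bound to the limit. Given that the lemma is called ``immediate'', I would present the direct Bochner estimate and note that it is rigorous whenever $s\mapsto e^{As}y$ is strongly measurable in $E_2$, which holds in all the examples, such as $E_2=\D((-A)^\alpha)$.
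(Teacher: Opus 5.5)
Your proposal is correct and takes the same route as the paper. The paper's proof is just the inclusion $L^1_{per}([0,T],E_2)\subset L^1_{per}([0,T],\X)$ plus the one-line estimate $\|\int_0^T e^{A(T-s)}f(s)\,ds\|_{E_2}\le\int_0^T\|e^{A(T-s)}f(s)\|_{E_2}\,ds\le C\|f\|_{L^1([0,T],E_2)}$, tacitly treating the integral as an $E_2$-valued Bochner integral. Your density and measurability discussion makes that hidden hypothesis explicit: either $s\mapsto e^{As}y$ is strongly measurable in $E_2$, or closed $E_2$-balls are closed in $\X$. You also correctly note that this holds for $E_2=\D((-A)^{\alpha})$, which is where the paper actually uses the lemma.
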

\bnp
The inclusion $W=L^1_{per}([0,T],E_2)\subset L^1_{per}([0,T],\X)$ is direct. Moreover, we have the estimate
\begin{align*}
\nor{\int_{0}^{T}e^{A(T-s)}f(s)ds}{E_2}&\leq \int_{0}^{T}\nor{e^{A(T-s)}f(s)}{E_2}ds\\
    &\leq C \int_{0}^{T}\nor{f(s)}{E_2}ds=C\nor{f}{ L^{1}([0,T],E_{2})}.
\end{align*} \enp
\begin{lemma}\label{lm:gainder}
Let $p\in [1,+\infty)$ and $E_3$ be a Banach space so that $L^p_{per}([0,T],E_3)$ is admissible for $\X$. For $k\in \N$  assume that $C^k_{per,0}([0,T],\D(A^k))\cap W^{k,p}_{per,0}([0,T],E_3)$ is dense $W^{k,p}_{per,0}([0,T],E_3)$.
If $f\in W^{k,p}_{per,0}([0,T],E_3)$ then $\int_0^T e^{A(T-s)}f(s)ds \in \D(A^k)$
 together with 
    \begin{align*}
 \nor{ \int_0^T e^{A(T-s)}f(s)ds  }{\D(A^k)}\leq C \nor{f}{W^{k,p}_{per}([0,T],E_3)}.
\end{align*}
In particular, $W^{k,p}_{per,0}([0,T],E_3)$ is admissible for $\D(A^k)$.
\end{lemma}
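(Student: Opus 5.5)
The plan is to prove the estimate first for smooth $f$ in the dense class $C^k_{per,0}([0,T],\D(A^k))\cap W^{k,p}_{per,0}([0,T],E_3)$, and then pass to the limit using that $A^k$ is closed. The key identity, for such smooth $f$, is an integration by parts in $s$ that trades each power of $A$ for a time derivative of $f$. Since $\frac{d}{ds}e^{A(T-s)}y=-Ae^{A(T-s)}y$ for $y\in\D(A)$, we get, for $f\in C^1([0,T],\D(A))$,
\begin{align*}
A\int_0^T e^{A(T-s)}f(s)ds=\int_0^T e^{A(T-s)}Af(s)ds = -\bigl[e^{A(T-s)}f(s)\bigr]_0^T+\int_0^T e^{A(T-s)}f'(s)ds .
\end{align*}
Here the boundary term equals $-f(T)+e^{AT}f(0)$. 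I expect the definition of $W^{k,p}_{per,0}$ in \eqref{e:defper0} to impose $f^{(j)}(0)=f^{(j)}(T)=0$ for $j\le k-1$, so that this term vanishes. (If instead it only imposes $f(0)=0$ plus periodicity, the term $(e^{AT}-\id)f(0)$ still vanishes.)

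Iterating $k$ times then gives, for $j\le k$,
\begin{align*}
A^j\int_0^T e^{A(T-s)}f(s)ds=\int_0^T e^{A(T-s)}f^{(j)}(s)ds .
\end{align*}
At each step I use that $f^{(j)}\in C^{k-j}([0,T],\D(A^{k-j}))$ with vanishing endpoint values. Each right-hand side has $f^{(j)}\in L^p_{per}([0,T],E_3)$, so the admissibility hypothesis on $L^p_{per}([0,T],E_3)$ for $\X$ (Definition~\ref{def:admiss}) bounds its $\X$-norm by $C\nor{f^{(j)}}{L^p([0,T],E_3)}$. Summing over $j=0,\dots,k$ yields
\begin{align*}
\nor{\int_0^T e^{A(T-s)}f(s)ds}{\D(A^k)}\le C\nor{f}{W^{k,p}_{per}([0,T],E_3)}
\end{align*}
for smooth $f$, with the graph norm on $\D(A^k)$.

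For general $f\in W^{k,p}_{per,0}([0,T],E_3)$, take smooth $f_n\to f$ in that space, which the density hypothesis allows. Put $y_n=\int_0^Te^{A(T-s)}f_n(s)ds$. By admissibility for $\X$ applied to $f_n-f$ and to its derivatives, the $y_n$ converge to $y=\int_0^Te^{A(T-s)}f(s)ds$ in $\X$. Moreover $A^jy_n$ is Cauchy in $\X$ for every $j\le k$, since by the identity above and admissibility it is controlled by $\nor{f_n-f_m}{W^{k,p}}$. Closedness of $A$ (applied successively) gives $y\in\D(A^k)$, and the estimate passes to the limit. The final claim follows directly from Definition~\ref{def:admiss} with $E_2=\D(A^k)$, since $W^{k,p}_{per,0}([0,T],E_3)\subset L^p_{per}([0,T],E_3)\subset L^1_{per}([0,T],\X)$; the last inclusion is implicit in the admissibility of $L^p_{per}([0,T],E_3)$.

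The main obstacle is the boundary terms in the integration by parts and their exact cancellation, which depends on the precise vanishing conditions in the definition of $W^{k,p}_{per,0}$. A secondary point is justifying the interchange of $A$ with the Bochner integral, which uses $f\in C([0,T],\D(A^k))$ and closedness of $A$.
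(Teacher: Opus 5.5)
Your proof is correct and follows the paper's argument. On the dense class $C^k_{per,0}([0,T],\D(A^k))\cap W^{k,p}_{per,0}([0,T],E_3)$, you integrate by parts $k$ times, and the conditions $f^{(j)}(0)=f^{(j)}(T)=0$ kill the boundary terms $-A^{k-1-j}\bigl(f^{(j)}(T)-e^{AT}f^{(j)}(0)\bigr)$; you then apply admissibility of $L^p_{per}([0,T],E_3)$ for $\X$ to the derivatives of $f$ and conclude by density, with your closedness-of-$A$ argument and the sum over $j\le k$ for the graph norm just spelling out the step the paper leaves as ``by density.''
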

\bnp
Using integration by parts together with the periodic boundary conditions (see the comments in Section~\ref{s:not}, for the spaces defined in \eqref{e:defper0}), we can write, for $f\in C^k_{per,0}([0,T],\D(A^k))\cap L^p_{per}([0,T],E_3)$,
      \begin{eqnarray*}
    A^k \int_0^T e^{A(T-s)}f(s)ds &=& -\sum_{j=0}^{k-1} A^{k-1-j} \left( f^{(j)}(T) - e^{AT}f^{(j)}(0) \right) + \int_0^T e^{A(T-s)}f^{(k)}(s)ds \\
    &=&\int_0^T e^{A(T-s)}f^{(k)}(s)ds.
    \end{eqnarray*}
In particular, the admissibility condition \eqref{ineqfadmiX} provides
    \begin{align*}
 \nor{\int_0^T e^{A(T-s)}f(s)ds }{\X}\leq C\nor{\frac{\d^kf}{\d s ^k}}{L^p([0,T],E_3)}\leq  \nor{f}{W^{k,p}_{per}([0,T],E_3)}.
\end{align*}
This inequality gives the expected result for functions in a dense subset. We obtain the expected result by density.
\enp
 Following the same proof, we have actually proved the following more technical result.
\begin{lemma}\label{lm:gainder2}
Let $p\in [1,+\infty)$ and $E_3$ be a Banach space so that $L^p_{per}([0,T],E_3)$ is admissible for $\X$. For $k\in \N$  assume that $C^k_{per}([0,T],\D(A^k))$ is dense in 
\begin{equation*}W:=\left\{f\in W^{k,p}_{per}([0,T],E_3); f^{(j)}(0)\in \D(A^{k-j-1}), \forall j=0,...,k-1\right\}
\end{equation*} with its natural norm. If $f\in W$, then $\int_0^T e^{A(T-s)}f(s)ds \in \D(A^k)$ together with 
    \begin{align*}
 \nor{ \int_0^T e^{A(T-s)}f(s)ds  }{\D(A^k)}\leq C \nor{f}{W^{k,p}_{per}([0,T],E_3)}+\nor{\sum_{j=0}^{k-1} A^{k-j-1}\left[ f^{(j)}(0)-e^{AT}f^{(j)}(0)\right] }{\X}.
\end{align*}
\end{lemma}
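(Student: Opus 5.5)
The plan is to follow the proof of Lemma~\ref{lm:gainder}, except that the boundary terms produced by the repeated integrations by parts are kept instead of vanishing. First fix $f\in C^k_{per}([0,T],\D(A^k))$. For any $g\in C^1([0,T],\D(A))$ we have $\frac{d}{ds}\bigl(e^{A(T-s)}g(s)\bigr)=e^{A(T-s)}(g'(s)-Ag(s))$. Integrating over $[0,T]$ gives
\begin{equation*}
A\int_0^T e^{A(T-s)}g(s)ds = -\bigl(g(T)-e^{AT}g(0)\bigr)+\int_0^T e^{A(T-s)}g'(s)ds .
\end{equation*}
We apply this identity $k$ times, successively to $g=f,f',\dots,f^{(k-1)}$, multiplying the $j$-th step by $A^{k-1-j}$. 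This is legitimate because $f^{(j)}$ takes values in $\D(A^{k})$ and $A$ commutes with the semigroup on the domain. The result is
\begin{equation*}
A^k \int_0^T e^{A(T-s)}f(s)ds = -\sum_{j=0}^{k-1} A^{k-1-j} \bigl( f^{(j)}(T) - e^{AT}f^{(j)}(0) \bigr) + \int_0^T e^{A(T-s)}f^{(k)}(s)ds.
\end{equation*}
By periodicity $f^{(j)}(T)=f^{(j)}(0)$, so the sum equals $\sum_j A^{k-1-j}[f^{(j)}(0)-e^{AT}f^{(j)}(0)]$.

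Next we bound each term in $\X$. The last integral is controlled by admissibility of $L^p_{per}([0,T],E_3)$ for $\X$, which gives at most $C\nor{f^{(k)}}{L^p([0,T],E_3)}\le C\nor{f}{W^{k,p}_{per}([0,T],E_3)}$. The boundary sum appears exactly as in the statement. Lower-order parts of the $\D(A^k)$ graph norm, namely $\nor{A^m\int_0^T e^{A(T-s)}f(s)ds}{\X}$ for $m<k$, are handled by the same identity with $k$ replaced by $m$. Alternatively, since $0\in\rho(A)$ by Assumption~\ref{assumptions}, the norm $\nor{A^k\cdot}{\X}$ is already equivalent to the $\D(A^k)$ norm. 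Either way this yields the estimate on the dense class.

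The last step is density, which I expect to be the only delicate point. Take $f\in W$ and approximants $f_n\in C^k_{per}([0,T],\D(A^k))$ converging to $f$ in the natural norm of $W$. That norm includes $\nor{A^{k-j-1}f^{(j)}(0)}{\X}$, so the right-hand side is continuous along the sequence. In particular $A^{k-1-j}(\id-e^{AT})f_n^{(j)}(0)$ converges, using that $e^{AT}$ commutes with $A^{k-1-j}$ on $\D(A^{k-1-j})$ and is bounded. Applying the estimate to $f_n-f_m$ then shows that $y_n:=\int_0^T e^{A(T-s)}f_n(s)ds$ is Cauchy in $\D(A^k)$. On the other hand, $y_n\to y:=\int_0^T e^{A(T-s)}f(s)ds$ in $\X$, because $W^{k,p}\subset L^p(E_3)$ and admissibility is continuous. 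Since $A^k$ is closed, $y\in\D(A^k)$ with $y_n\to y$ in $\D(A^k)$, and passing to the limit in the inequality gives the claim.
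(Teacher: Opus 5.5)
Your proof is correct and is essentially the paper's. The paper simply says that the argument of Lemma~\ref{lm:gainder} goes through with the boundary terms $A^{k-1-j}\bigl(f^{(j)}(0)-e^{AT}f^{(j)}(0)\bigr)$ kept, followed by density, and your Cauchy-sequence argument using the closedness of $A^k$ correctly fills in that density step. One small remark: with the paper's convention $\nor{x}{\D(A^k)}=\nor{x}{\X}+\nor{A^kx}{\X}$, bounding $\nor{y}{\X}$ by admissibility and $\nor{A^ky}{\X}$ by your identity gives the boundary term with coefficient exactly $1$, as stated, whereas your norm-equivalence alternative puts a constant in front of it.
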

\subsection{Admissible operators}
In this part, we are concerned with applications of boundary controllability, where the operators are typically unbounded. 
\begin{definition}\cite[Definition 4.2.1]{TucsnakWeiss}
Let $\X$ and $U$ be Hilbert spaces.  An operator $B\in \mathcal{L}(U,\X_{-1})$ is said to be an admissible control
operator for $\SS$ if for some $\tau>0$, the operator $\Phi_{\tau}\in \mathcal{L}(L^2([0,+\infty)),\X_{-1})$ defined by
\begin{equation*}
    \Phi_{\tau} =\int_0^{\tau}S(\tau-s) Bu(s)~ds,
\end{equation*}
satisfies $\operatorname{Ran} \Phi_{\tau}\subset \X$, where $\operatorname{Ran}$ denotes the range of $\Phi_{\tau}$.  
\end{definition}
In this case,  $\operatorname{Ran} \Phi_{t}\subset \X$ holds for any $t\geq 0$, see \cite[Proposition 4.2.2.]{TucsnakWeiss}. Moreover, for every $z_0 \in \X$ and $v \in L^2_{loc}([0,+\infty);U)$, the equation 
\begin{equation*}
    \begin{cases}
         \partial_t z(t) = Az(t) + B v(t), \quad t \in [0,+\infty),\\
         z(0)=z_0,
    \end{cases}
\end{equation*}
has a unique mild solution $z$ which belongs to $C([0,+\infty);\X)$, see \cite[Proposition 4.2.5.]{TucsnakWeiss}.
\begin{definition}\label{def:admiopforE2}Let $E_2\subset \X$ be a Banach space and $U$ be a Banach space. An operator $B\in \mathcal{L}(U,\X_{-1})$ is said to be an admissible control
operator for $\SS$ and $E_2$, if for  all $\tau>0$, the operator $\Phi_{\tau}\in \mathcal{L}(L^2([0,+\infty)),\X_{-1})$ defined by
\begin{equation*}
    \Phi_{\tau}u =\int_0^{\tau}S(\tau-s) Bu(s)~ds,
\end{equation*}
satisfies $\operatorname{Ran} \Phi_{\tau}\subset E_2$. We also assume that for all $\tau>0$, there exists $C_{\tau}$ so that 
   \begin{align*}
 \nor{ \Phi_{\tau}u }{E_2}\leq C_{\tau} \nor{u}{L^2([0,\tau],U)}, \quad \forall u\in L^2([0,\tau],U).
\end{align*}
\end{definition}
\begin{remark}
    The $L^2$  norm in time is adopted here due to its relevance in boundary control applications. However, an analogous definition can be established for the $L^p$ norm, where $p\in [1,+\infty]$.
\end{remark}

In the same spirit as Lemma~\ref{lm:gainder}, we obtain the following result.
\begin{lemma}
Let $U$ be a Banach space and $B\in \mathcal{L}(U,\X_{-1})$  be  an  an admissible control operator for $E_2$.  For $k\in \N$, assume that there exists a dense set $Y\subset U$, so that $BY\subset  \D(A^k)$ and $C^k_{per,0}([0,T],Y)$ dense in $ H^{k}_{per,0}([0,T],U)$. If $f\in H^{k}_{per,0}([0,T],U)$, then $\Phi_T(f)\in \D(A^k)$ and
    \begin{align*}
 \nor{ \Phi_T(f)}{\D(A^k)}\leq C \nor{f}{H^{k}_{per}([0,T],U)}.
\end{align*}
\bnp
The proof is similar to Lemma~\ref{lm:gainder} working on the dense set $C^k_{per,0}([0,T],Y)$ that is dense in $f\in H^{k}_{per,0}([0,T],U)$.
\enp
\end{lemma}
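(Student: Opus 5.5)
The argument mirrors the proof of Lemma~\ref{lm:gainder}: we obtain the identity and the estimate on the dense class $C^k_{per,0}([0,T],Y)$ and then pass to the limit. Take $f\in C^k_{per,0}([0,T],Y)$. Since $BY\subset \D(A^k)$, the function $s\mapsto Bf(s)$ lies in $C^k([0,T],\D(A^k))$. Then $\Phi_T(f)=\int_0^T e^{A(T-s)}Bf(s)\,ds$ is an honest Bochner integral in $\D(A^k)$. We commute $A^k$ with the integral and integrate by parts $k$ times, using
\[
A e^{A(T-s)}=-\frac{d}{ds}e^{A(T-s)}
\]
on $\D(A)$, together with the fact that $B$ is bounded from $U$ to $\X_{-1}$ and therefore commutes with $d/ds$. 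This gives
\begin{align*}
A^k\Phi_T(f)=-\sum_{j=0}^{k-1}A^{k-1-j}\bigl(Bf^{(j)}(T)-e^{AT}Bf^{(j)}(0)\bigr)+\Phi_T(f^{(k)}).
\end{align*}
The vanishing conditions encoded in $C^k_{per,0}$ (see \eqref{e:defper0}), namely $f^{(j)}(0)=f^{(j)}(T)=0$ for $j\le k-1$, kill all boundary terms. Hence $A^k\Phi_T(f)=\Phi_T(f^{(k)})$.

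Next we apply the admissibility of $B$ for $E_2$ (Definition~\ref{def:admiopforE2}), using $E_2\hookrightarrow\X$. This yields
\[
\nor{A^k\Phi_T(f)}{\X}\le C_T\nor{f^{(k)}}{L^2([0,T],U)}.
\]
Similarly, $\nor{\Phi_T(f)}{\X}\le C_T\nor{f}{L^2([0,T],U)}$. Together these give
\[
\nor{\Phi_T(f)}{\D(A^k)}\le C\nor{f}{H^k_{per}([0,T],U)}
\]
on the dense class.

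To conclude, take a general $f\in H^k_{per,0}([0,T],U)$ and a sequence $f_n\in C^k_{per,0}([0,T],Y)$ converging to $f$ in $H^k$. By the estimate, $\Phi_T(f_n)$ is Cauchy in $\D(A^k)$, which is complete because $A$ is closed. By admissibility it converges in $\X$ to $\Phi_T(f)$, since $L^2$ convergence suffices. Therefore the $\D(A^k)$-limit equals $\Phi_T(f)$, which gives both the membership $\Phi_T(f)\in\D(A^k)$ and the bound.

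The main obstacle is justifying the integration by parts. It requires $s\mapsto e^{A(T-s)}Bf^{(j)}(s)$ to be $C^1$ with values in $\D(A^{k-j})$, and this is where the hypothesis $BY\subset\D(A^k)$ and the $C^k$-in-$Y$ regularity are essential. I would handle this step by induction on $k$, applying the case $k=1$ to $f^{(j)}$ at each stage.
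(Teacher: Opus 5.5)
Your proposal is correct and follows the paper's argument: integrate by parts on the dense class $C^k_{per,0}([0,T],Y)$ as in Lemma~\ref{lm:gainder}, where the vanishing boundary terms give $A^k\Phi_T(f)=\Phi_T(f^{(k)})$, then bound this by admissibility and pass to the limit using completeness of $\D(A^k)$ and identification of the limit in $\X$. One point that you and the paper both leave implicit is that $s\mapsto Bf(s)$ being $C^k$ into $\D(A^k)$ requires $B|_Y$ to be continuous into $\D(A^k)$ for whatever topology $Y$ carries; you can avoid this by differentiating $e^{A(T-s)}Bf(s)$ in the extrapolation space $\X_{-2}$, which needs only $Bf\in C^k([0,T],\X_{-1})$ and, together with admissibility, yields $A^k\Phi_T(f)=\Phi_T(f^{(k)})$ without using $BY\subset\D(A^k)$ at all.
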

Note that similar computations were performed in \cite{EZ:10} concerning the regularity of the control.
\section{Uniqueness and convergence}
\begin{prop}
\label{p:convuniq}
 Let $f\in L^1_{per}([0,T],\X)$. Assume the existence of a $T$-periodic solution $u_{per}\in C^0_{per}([0,T],\X)$ of \eqref{PP}. Then, this solution is unique, and for any solution $u\in C^0(\R_+,\X)$ to \eqref{PP}, we have
\begin{align*}
 \nor{u(t)-u_{per}(t)}{\X}   \underset{t\to+\infty}{\longrightarrow}0.
\end{align*}
\end{prop}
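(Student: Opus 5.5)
The plan is to reduce everything to the free semigroup via linearity and then use the strong stability of $\SS$ guaranteed by Assumption~\ref{assumptions} (see the Remark following it and Definition~\ref{stronglyS}). Let $u$ be any (mild) solution of \eqref{PP} with forcing $f\in L^1_{per}([0,T],\X)$, and set $w:=u-u_{per}$. Both $u$ and $u_{per}$ satisfy the Duhamel formula with the same source $f$, so subtracting the two formulas cancels the integral terms and gives $w(t)=e^{At}(u(0)-u_{per}(0))$ for all $t\geq 0$. Hence $w$ is an orbit of the free semigroup.

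To prove convergence, I will invoke strong stability: for every $x\in\X$, $\nor{e^{At}x}{\X}\to 0$ as $t\to+\infty$. This follows from Assumption~\ref{assumptions}, namely uniform boundedness together with $i\R\subset\rho(A)$ (Arendt--Batty--Lyubich--V\~u; alternatively, on the dense set $\D(A)$ one can use Lemma~\ref{t:decay}, which gives a decay rate $h(t)\to 0$, and extend to all of $\X$ using the uniform bound $M$). Applying this with $x=u(0)-u_{per}(0)$ gives $\nor{u(t)-u_{per}(t)}{\X}\to 0$.

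For uniqueness, let $u_{per}$ and $\tilde u_{per}$ be two $T$-periodic solutions. Their difference $w$ is $T$-periodic, so $w(0)=w(nT)=e^{AnT}w(0)$ for every $n\in\N$. The right-hand side tends to $0$ as $n\to\infty$ by strong stability, which forces $w(0)=0$ and hence $w\equiv 0$. Equivalently, uniqueness follows from the convergence statement, since the difference of two periodic functions tending to $0$ must vanish identically.

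The only nontrivial input is strong stability for arbitrary data in $\X$, not just smooth data. If one relies on Lemma~\ref{t:decay}, the step to check carefully is the density argument. Given $x\in\X$ and $\e>0$, choose $y\in\D(A)$ with $\nor{x-y}{\X}<\e$. Then
\begin{equation*}
\nor{e^{At}x}{\X}\leq M\e+h(t)\nor{y}{\D(A)},
\end{equation*}
whose limsup as $t\to+\infty$ is at most $M\e$. Since $\e$ is arbitrary, this gives $e^{At}x\to 0$.
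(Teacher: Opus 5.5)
Your proof is correct and is essentially the paper's argument. You write the difference of two solutions as the free orbit $e^{At}(u(0)-u_{per}(0))$, obtain strong stability by splitting the data into a $\D(A)$ part, controlled by $\nor{e^{At}A^{-1}}{\mathcal{L}(\X)}\to 0$ from Theorem~\ref{t:decay}, plus a small remainder controlled by the uniform bound $M$, and get uniqueness from $w(0)=e^{AnT}w(0)\to 0$.
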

\bnp
Let $u_2$ be another $T$-periodic solution of \eqref{PP}.  Then, $v=u_{per}-u_2$ is $T$-periodic and it can be written $v(t)=S(t)v(0)$. By periodicity, we get, 
\begin{align}
\label{e:uniqn}
    \nor{v(0)}{\X}=\nor{v(nT)}{\X}=\nor{S(nT)v(0)}{\X}.
\end{align}
We claim that \begin{gather}\label{claim}\nor{S(t)v_0}{\X}\underset{t\to+\infty}{\longrightarrow}0,\end{gather} for any $v_0\in \X$. This will end the proof of the first statement since it proves that $v(0)=0$, after letting $n$ go to $+\infty$ in \eqref{e:uniqn}. The second statement is obtained by considering $v=u-u_{per}$ (which is not periodic except if it is zero)  satisfies $v(t)=S(t)v(0)$.  We conclude using the claim \eqref{claim}.\\

We now review for the convenience of the reader the proof of \eqref{claim} (certainly already known).  By density of the domain, we can decompose $v_0=w+r$ with $w\in D(A)$ and $\nor{r}{\X}\leq \e$. We have
\begin{align*}
\nor{S(t)w}{\X}\leq \nor{S(t)A^{-1}}{\mathcal{L}(\X)}\nor{A w}{\X}\underset{t\to+\infty}{\longrightarrow}0,
\end{align*}
where we have used Theorem \ref{t:decay}. Moreover, denoting $C=\sup_{t\geq 0}\nor{S(t)}{\mathcal{L}(\X)}<+\infty$ which is finite by assumption, we can estimate
\begin{align*}
\nor{S(t)r}{\X}\leq \nor{S(t)}{\mathcal{L}(\X)}\nor{r}{\X}\leq C\e.
\end{align*}
This concludes the proof of the claim.
\enp
\section{Existence of periodic solutions for linear damped systems}
\subsection{From decay to the existence of periodic solutions}
\begin{theorem}\label{BasicIdea}
Let $E_{2}$ be a Banach space with $E_{2}\subset \X$. 
Assume that the semigroup $\SS$ satisfies
\begin{equation*}
\nor{e^{At}u_{0}}{\X}\leq h(t)\nor{u_{0}}{E_{2}}, \quad \forall t\geq 0,\ \forall u_0\in E_2,
\end{equation*}
for some $h\in L^{1}([0,+\infty))$ continuous decreasing. Let $T>0$. Assume also that we have the inequality 
\begin{align}
\label{contE_2thm}
    \nor{e^{As}y}{E_2}\leq C\nor{y}{E_2}, \quad \forall s\in [0,T], \ \forall y\in E_2.
\end{align}
Then, for any $f\in L^{1}_{per}([0,T],E_{2})$, there exists a unique initial data $w_0\in \X$  such that  the solution  of 
\bneqn
\label{PPthm}
\partial_{t}u(t)&=&Au(t)+f(t), \quad t \in [0,+\infty),\\
u(0)&=&w_0
\eneqn
is $T$-periodic. Moreover, there exists a constant $C>0$ so that 
\begin{equation*}
    \nor{w_0}{\X}\leq C\nor{f}{L^{1}([0,T],E_{2})}.
\end{equation*}
\end{theorem}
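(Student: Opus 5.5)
The plan is to reduce periodicity to a fixed-point equation for the initial data. By Duhamel, the solution of \eqref{PPthm} satisfies $u(T)=e^{AT}w_0+F$, where $F:=\int_0^T e^{A(T-s)}f(s)\,ds$. Since $f$ is $T$-periodic and the equation is autonomous, $u$ is $T$-periodic if and only if $u(T)=w_0$, that is,
\begin{equation*}
(\id-e^{AT})w_0=F .
\end{equation*}
Uniqueness of such a $w_0$ is already contained in Proposition~\ref{p:convuniq}, or directly: $e^{AT}v=v$ forces $v=e^{nAT}v\to 0$ by the strong stability claim \eqref{claim}. So everything reduces to existence together with the bound on $w_0$.

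For existence I will write down the Neumann-type series
\begin{equation*}
w_0:=\sum_{n\geq 0} e^{nAT}F .
\end{equation*}
By Lemma~\ref{lm:L1E2admiss}, which uses \eqref{contE_2thm}, we have $F\in E_2$ with $\nor{F}{E_2}\leq C\nor{f}{L^1([0,T],E_2)}$. The decay hypothesis then gives $\nor{e^{nAT}F}{\X}\leq h(nT)\nor{F}{E_2}$. Because $h$ is decreasing and integrable,
\begin{equation*}
\sum_{n\geq 1}h(nT)\leq \frac1T\int_0^\infty h(t)\,dt<\infty .
\end{equation*}
Hence the series converges absolutely in $\X$, and
\begin{equation*}
\nor{w_0}{\X}\leq \Big(\nor{F}{\X}+T^{-1}\nor{h}{L^1}\nor{F}{E_2}\Big)\leq C\nor{f}{L^1([0,T],E_2)},
\end{equation*}
where we use the continuous embedding $E_2\subset\X$ to control $\nor{F}{\X}$.

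Next I check that $w_0$ solves the fixed-point equation. The operator $e^{AT}$ is bounded on $\X$, so it commutes with the convergent sum, and the series telescopes:
\begin{equation*}
(\id-e^{AT})\sum_{n=0}^N e^{nAT}F=F-e^{(N+1)AT}F .
\end{equation*}
The remainder satisfies $\nor{e^{(N+1)AT}F}{\X}\leq h((N+1)T)\nor{F}{E_2}\to 0$, since $h$ is integrable and decreasing and hence tends to $0$. Thus $(\id-e^{AT})w_0=F$, and the mild solution starting from $w_0$ is $T$-periodic. To see this, let $\tilde u(t)=u(t+T)$. It solves the same equation, because $f$ is periodic, and has the same initial value $\tilde u(0)=u(T)=w_0$, so uniqueness of mild solutions gives $\tilde u=u$.

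The only mildly delicate points are justifying the comparison of the sum with the integral, which needs $h$ monotone, and making sure that $F$ genuinely lies in $E_2$ with the stated estimate, which is where \eqref{contE_2thm} enters through Lemma~\ref{lm:L1E2admiss}. Beyond these, the argument is routine.
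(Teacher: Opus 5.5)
Your proposal is correct and follows the paper's proof: you use Duhamel to reduce periodicity to $u(T)=w_0$, define $w_0=\sum_{n\ge 0}e^{nAT}F_T$, get normal convergence in $\X$ from $F_T\in E_2$ and $\sum_n h(nT)<\infty$, and obtain uniqueness from strong stability as in Proposition~\ref{p:convuniq}. Your telescoping remainder and the shift argument $\tilde u(t)=u(t+T)$ only make explicit steps the paper carries out by reindexing the series directly.
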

\begin{proof}
Initially, we need to find  $u_0\in \X$  so that the solution is periodic, that is, for all $n\in \N$, $u_{0}=u(nT)$.  By Duhamel formula, the solution of $\partial_t u=Au+f(t)$ is given by 
\bna
u(T)&=&e^{AT}u_{0}+\int_{0}^{T}e^{A(T-s)}f(s)ds=e^{AT}u_{0}+F_{T},
\ena
where we denote,  $$F_{T}=\int_{0}^{T}e^{A(T-s)}f(s)ds.$$
By the hypothesis and the assumption \eqref{contE_2thm}, $F_T\in E_2$ and satisfies
\begin{align*}
    \nor{F_T}{E_2}&= \nor{\int_{0}^{T}e^{A(T-s)}f(s)ds}{E_2}\leq \int_{0}^{T}\nor{e^{A(T-s)}f(s)}{E_2}ds\\
    &\leq C \int_{0}^{T}\nor{f(s)}{E_2}ds=C\nor{f}{ L^{1}([0,T],E_{2})}.
\end{align*}
By iteration, for $n\in\N$, we should have 
\begin{equation}
\label{e:unT}
\begin{split}
  u(nT)&=e^{nAT}u_0+\int_0^{nT}e^{A(nT-s)}f(s)ds\\
  &=e^{nAT}u_0+\int_0^{nT}e^{A(T-s +(n-1)T)}f(s)ds\\
&=e^{nAT}u_0+\sum_{k=1}^{n}\int_{(k-1)T}^{kT}e^{A\left((T-s)+(n-1)T\right)}f(s)ds\\
&=e^{nAT}u_0+\sum_{k=1}^{n}\int_{0}^{T}e^{A(T-\Tilde{s}+(n-k)T)}f(\Tilde{s}+(k-1)T)d\Tilde{s}\\
&=e^{nAT}u_0+\sum_{k=1}^{n}e^{A(n-k)T}\int_{0}^{T}e^{A(T-\Tilde{s})}f(\Tilde{s})d\Tilde{s}\\
&=e^{nAT}u_0+\sum_{k=1}^{n}e^{A(n-k)T}F_T=e^{nAT}u_0+\sum_{m=0}^{n-1}e^{A m T}F_T,
\end{split}
\end{equation}
by the $T$-periodicity of $f$.
\medskip
As we have verified in the proof of Proposition~\ref{p:convuniq},  $\|e^{nAT}u_0\|_{\X}\to 0$ as $n\to \infty$. We now formally define the following choice of initial datum: 
$$w_{0}:=\sum_{n=0}^{+\infty}e^{nAT}F_{T}.$$
$w_0$ is well defined in $\X$ when $F_T\in E_2$. Indeed, it is a sum normally convergent in $\X$ and together with the estimate
\begin{equation*}
\nor{w_{0}}{\X}\leq \sum_{n=0}^{+\infty}\nor{e^{AnT}F_{T}}{\X}\leq \sum_{n=0}^{+\infty} h(nT)\nor{F_{T}}{E_{2}}\leq C\nor{F_{T}}{E_{2}}\leq  C\nor{f}{ L^{1}([0,T],E_{2})},
\end{equation*}
with $h\in L^{1}([0,+\infty))$ a decreasing function, so that we have $\sum_{n=0}^{+\infty} h(nT)<+\infty$. It remains to check, that the proposed formula for the initial datum $w_0$ produces a periodic solution as expected.  For $u(T):=e^{AT}w_{0}+\int_{0}^{T}e^{A(T-s)}f(s)ds$, we have
\bna
u(T)-w_{0}&=&e^{AT}w_{0}+F_{T}-w_{0}\\
&=&e^{AT}\sum_{n=0}^{+\infty}e^{nAT}F_{T}+F_{T}-\sum_{n=0}^{+\infty}e^{nAT}F_{T}\\
&=&\sum_{n=0}^{+\infty}e^{(n+1)AT}F_{T}+F_{T}-\sum_{n=0}^{+\infty}e^{nAT}F_{T}\\
&=&\sum_{n=1}^{+\infty}e^{nAT}F_{T}+F_{T}-\sum_{n=0}^{+\infty}e^{nAT}F_{T}=0.
\ena
The uniqueness comes from Proposition~\ref{p:convuniq} (or by using the reasoning in \eqref{e:unT} which imposes a unique formula).
\end{proof}
In the proof of Theorem~\ref{BasicIdea}, we only used the fact that $F_T\in E_2$. This can be generalized to any set $W$ that is admissible, see Definition~\ref{def:admiss}.  It leads to the following variant that will be useful.
\begin{theorem}\label{t:variant}
Let $E_{2}$ be a Banach space with $E_{2}\subset \X$. Assume that the semigroup $\SS$ satisfies
\begin{equation*}
\nor{e^{At}u_{0}}{\X}\leq h(t)\nor{u_{0}}{E_{2}}, \quad \forall t\geq 0, \ \forall u_0\in E_2,
\end{equation*}
for some $h\in L^{1}([0,+\infty))$ continuous decreasing. Let $T>0$ and $W$ a Banach space  admissible for $E_2$ according to Definition \ref{def:admiss}. Then, for any $f\in W$, there exists a unique initial data $w_0\in \X$ such that the solution of \eqref{PPthm} is $T$-periodic.  Moreover, there exists a constant $C>0$ so that
\begin{equation}
\label{e:boundu0adm}
    \nor{w_0}{\X}\leq C\nor{f}{W}.
\end{equation}\end{theorem}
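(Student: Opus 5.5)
The argument repeats the proof of Theorem~\ref{BasicIdea} verbatim. The only change is where the regularity of $F_T$ comes from: admissibility of $W$ replaces the pointwise estimate \eqref{contE_2thm}. Fix $f\in W$. Since $W\subset L^1_{per}([0,T],\X)$, standard semigroup theory gives a unique mild solution of \eqref{PPthm} for every $w_0\in\X$, namely
\begin{equation*}
u(t)=e^{At}w_0+\int_0^t e^{A(t-s)}f(s)\,ds .
\end{equation*}
Set $F_T:=\int_0^T e^{A(T-s)}f(s)\,ds$. By Definition~\ref{def:admiss}, $F_T\in E_2$ and $\nor{F_T}{E_2}\leq C\nor{f}{W}$.

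Next, I define the candidate initial datum
\begin{equation*}
w_0:=\sum_{n=0}^{+\infty}e^{nAT}F_T .
\end{equation*}
By the decay hypothesis, $\nor{e^{nAT}F_T}{\X}\leq h(nT)\nor{F_T}{E_2}$. Because $h$ is decreasing and integrable, $\sum_{n\geq 0}h(nT)\leq h(0)+T^{-1}\int_0^{\infty}h<+\infty$. The series therefore converges normally in $\X$, and
\begin{equation*}
\nor{w_0}{\X}\leq C\nor{F_T}{E_2}\leq C\nor{f}{W},
\end{equation*}
which is \eqref{e:boundu0adm}. The telescoping computation from the proof of Theorem~\ref{BasicIdea} then gives $u(T)=e^{AT}w_0+F_T=w_0$; here one uses that $e^{AT}$ is bounded on $\X$, so it can be applied term by term to the normally convergent series.

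To pass from $u(T)=w_0$ to $T$-periodicity on all of $\R_+$, I will use the periodicity of $f$. The function $v(t):=u(t+T)$ is a mild solution of the same equation with forcing $f(\cdot+T)=f$ and initial datum $u(T)=w_0$. By uniqueness of mild solutions, $v=u$, and hence $u$ is $T$-periodic.

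Uniqueness of $w_0$ follows from Proposition~\ref{p:convuniq}, which applies because $f\in L^1_{per}([0,T],\X)$. The argument has no genuine obstacle. The only points to check carefully are that normal convergence in $\X$ suffices without any convergence in $E_2$, and the bound on $\sum_n h(nT)$ by the integral of $h$.
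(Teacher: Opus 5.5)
Your proof is correct and follows the paper's route exactly. Admissibility supplies $F_T\in E_2$ with $\nor{F_T}{E_2}\leq C\nor{f}{W}$, after which everything is the proof of Theorem~\ref{BasicIdea}: the normally convergent series $w_0=\sum_{n\geq 0} e^{nAT}F_T$, the telescoping identity $u(T)=w_0$, and uniqueness via Proposition~\ref{p:convuniq}. You also spell out a step the paper leaves implicit, namely that $u(T)=w_0$ gives $T$-periodicity on all of $\R_+$ by uniqueness of mild solutions and periodicity of $f$, and that step is handled correctly.
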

\bnp
Following the same notation as in the proof of Theorem~\ref{BasicIdea}, Definition \ref{def:admiss} gives $F_T\in E_2$ together with
\begin{align*}
    \nor{F_T}{E_2}&= \nor{\int_{0}^{T}e^{A(T-s)}f(s)ds}{E_2}\leq C \nor{f}{W}.
\end{align*}
The proof ends up exactly the same as in Theorem~\ref{BasicIdea}.
\enp
A similar variant holds for an admissible control operator.
\begin{theorem}\label{t:variantop}
Let $E_{2}$ be a Banach space with $E_{2}\subset \X$. Assume that the semigroup $\SS$ satisfies
\begin{equation*}
\nor{e^{At}u_{0}}{\X}\leq h(t)\nor{u_{0}}{E_{2}}, \quad \forall t\geq 0,\ \forall u_0\in E_2,
\end{equation*}
for some $h\in L^{1}([0,+\infty))$ continuous decreasing.
Let $U$ be a Banach space and $B\in \mathcal{L}(U,\X_{-1})$ an admissible control
operator for $E_2$, according to Definition~\ref{def:admiopforE2}. Then, for any $T>0$ and $f\in L^2_{per}([0,T],U)$, there exists a unique initial data $w_0\in \X$  such that  the solution  of 
\bneq
\partial_{t}u&=&Au+Bf,\\
u(0)&=&w_0
\eneq
is $T$-periodic. Moreover, there exists a constant $C>0$ so that
\begin{equation*}
    \nor{w_0}{\X}\leq C\nor{f}{L^2_{per}([0,T],U)}.
\end{equation*}\end{theorem}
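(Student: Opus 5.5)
The plan is to follow the proof of Theorem~\ref{BasicIdea} essentially verbatim: all that argument needs is that the one-period Duhamel term lands in $E_2$ with a bound, and Definition~\ref{def:admiopforE2} supplies exactly this for $\Phi_T$.

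\emph{Step 1: the solution map is well defined.} Since $B$ is admissible for $E_2$, it is in particular an admissible control operator in the sense of \cite[Definition 4.2.1]{TucsnakWeiss}, because $E_2\subset\X$. Hence, for $f\in L^2_{per}([0,T],U)\subset L^2_{loc}([0,+\infty),U)$ and any $w_0\in\X$, the mild solution
\begin{equation*}
u(t)=e^{At}w_0+\Phi_t f,\qquad u\in C([0,+\infty);\X),
\end{equation*}
exists and is unique by \cite[Proposition 4.2.5]{TucsnakWeiss}.

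\emph{Step 2: the one-period term.} Set $F_T:=\Phi_T f=\int_0^T S(T-s)Bf(s)\,ds$. By Definition~\ref{def:admiopforE2}, $F_T\in E_2$ and $\nor{F_T}{E_2}\le C_T\nor{f}{L^2([0,T],U)}$.

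\emph{Step 3: the iteration formula.} I will reproduce \eqref{e:unT}, i.e.\ show
\begin{equation*}
\Phi_{nT}f=\sum_{m=0}^{n-1}e^{AmT}F_T .
\end{equation*}
For admissible operators the convolution is only defined a priori in $\X_{-1}$, so the cleanest route is the composition property $\Phi_{t+\tau}f=S(\tau)\Phi_t f+\Phi_\tau\bigl(f(t+\cdot)\bigr)$, valid in $\X_{-1}$ (see \cite{TucsnakWeiss}). Periodicity gives $\Phi_T(f(kT+\cdot))=F_T$, and the identity follows by induction.

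\emph{Step 4: definition of $w_0$.} Define $w_0:=\sum_{n\ge0}e^{nAT}F_T$. Since $h$ is decreasing and integrable, $\sum_n h(nT)<\infty$, so the series converges normally in $\X$ and
\begin{equation*}
\nor{w_0}{\X}\le \Bigl(\sum_n h(nT)\Bigr)\nor{F_T}{E_2}\le C\nor{f}{L^2_{per}([0,T],U)} .
\end{equation*}
The telescoping computation from the proof of Theorem~\ref{BasicIdea} then gives $u(T)=e^{AT}w_0+F_T=w_0$.

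\emph{Step 5: periodicity for all $t$.} Since $u(T)=w_0$, the function $v(t):=u(t+T)$ solves the same equation with forcing $f(\cdot+T)=f$ and initial datum $w_0$, by the composition property again. Uniqueness of mild solutions then yields $v=u$, so $u$ is $T$-periodic.

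\emph{Step 6: uniqueness of $w_0$.} This follows from Proposition~\ref{p:convuniq}: the difference of two periodic solutions is a periodic free orbit $e^{At}v_0$, which tends to $0$, hence vanishes. That argument only uses the homogeneous semigroup, so the unbounded $B$ causes no difficulty here.

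The main obstacle I expect is Step 3 together with the periodicity check in Step 5. Both require manipulating the convolution $\Phi_t$ in $\X_{-1}$ and justifying that shifting and splitting the integral are legitimate for an unbounded $B$. I will handle this by invoking the semigroup/composition property of the input maps from \cite{TucsnakWeiss}. Alternatively, one can approximate $f$ by smooth functions, for which the computation is classical, and pass to the limit using the $L^2$ continuity of $\Phi_\tau$ into $\X$.
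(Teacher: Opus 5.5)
Your proof is correct and follows the paper's argument. The paper gives no separate proof here: it only says the variant holds as in Theorem~\ref{BasicIdea}, with $F_T=\Phi_T f\in E_2$ supplied by Definition~\ref{def:admiopforE2}, followed by the same normally convergent series $w_0=\sum_n e^{nAT}F_T$ and the same telescoping. Your use of the composition property $\Phi_{t+\tau}f=S(\tau)\Phi_t f+\Phi_\tau(f(t+\cdot))$ to get periodicity for all $t$, and your remark that uniqueness only uses the homogeneous semigroup, supply details the paper leaves implicit.
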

In many applications, the space $E_2$  will be related to the domain of $A$ or $(-A)^{\alpha}$, as discussed in the introduction. One can formulate interpolation arguments in this context, leading to Theorem~\ref{t:interp}, which we now prove.
\bnp[Proof of Theorem~\ref{t:interp}]
By the interpolation arguments of Lemma~\ref{claimalpha} below, we get for all $ \alpha >0$,
\begin{equation*}
\nor{e^{At}u_{0}}{\X}\leq C_{\alpha}h(t/c_{\alpha})^{\alpha}\nor{u_{0}}{\D((-A)^{\alpha})}, \quad \forall t\geq 0, \forall u_0\in \D((-A)^{\alpha}) .
\end{equation*}
Apply Theorem~\ref{t:variant} to any space that is admissible for $\D((-A)^{\alpha})$.  We have a bound $M>0$ so that  $ \nor{e^{sA}}{\mathcal{L}(\X)}\leq  M$ for $s\in [0,T]$. By commutation and the definition of the norm, we have for $s\in [0,T]$,
\begin{align*}
    \nor{e^{As}y}{\D((-A)^{\alpha})}&= \nor{e^{As}y}{\X}+\nor{ (-A)^{\alpha} e^{As}y}{\X}=\nor{e^{As}y}{\X}+\nor{e^{As} (-A)^{\alpha} y}{\X}\\
    &\leq M \nor{y}{\X}+M\nor{ (-A)^{\alpha} y}{\X} \leq M\nor{y}{\D((-A)^{\alpha})}.
\end{align*}
Hence, \eqref{contE_2} is satisfied and Lemma~\ref{lm:L1E2admiss} gives that $L^{1}([0,T],\D((-A)^{\alpha}))$ is admissible for $\D((-A)^{\alpha})$, which shows the first result. Moreover, $L^{1}([0,T],\X)$ is admissible for $\X$, so Lemma~\ref{lm:gainder} implies that $W^{\lceil \alpha\rceil,1}_{per,0}([0,T],\X)$ is admissible for $\D((-A)^{\lceil \alpha\rceil})$. Since $\lceil \alpha\rceil\geq \alpha$, it implies $\D((-A)^{\lceil \alpha\rceil}) \subset \D((-A)^{\alpha})$ (see Lemma~\ref{equivnorm}). In particular, $W^{\lceil \alpha\rceil,1}_{per,0}([0,T],\X)$ is admissible for $\D((-A)^{\alpha})$, which gives the second result.
\enp
We used the following interpolation result.  We refer to Section~\ref{s:powerinterp} for more details.
\begin{lemma}\label{claimalpha}
Assume $\SS$ satisfies \eqref{e:decayabtrsinterp} for $h \in C^0(\R_+, \R_+)$ continuous decreasing. For all $\forall \alpha >0$, there exists $C_{\alpha }>0 $ so  
\begin{equation*}
\nor{e^{A t}u_{0}}{\X}\leq C_{\alpha } \left[h\left(  \frac{t}{\lceil \alpha\rceil}\right)\right]^{\alpha}\nor{u_{0}}{\D((-A)^{\alpha})}, \quad \forall t\geq 0, \forall u_0\in \D((-A)^{\alpha}) .
\end{equation*}
\end{lemma}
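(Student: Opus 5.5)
The plan is to first prove the estimate for integer exponents $\alpha=k\in\N^*$ by iterating the semigroup property, and then to treat general $\alpha>0$ by interpolating between $\X$ (where $\nor{e^{At}}{\mathcal{L}(\X)}\le M$) and $\D((-A)^{k})$ with $k=\lceil\alpha\rceil$. Since $0\in\rho(A)$, we have $\nor{u}{\D(A)}\simeq\nor{Au}{\X}$. Then \eqref{e:decayabtrsinterp} reads $\nor{e^{At}A^{-1}}{\mathcal{L}(\X)}\le C h(t)$.

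\textbf{Integer case.} For $u_0\in\D(A^k)$, write $e^{At}u_0=\big(e^{A t/k}A^{-1}\big)^k A^k u_0$, using that $A^{-1}$ commutes with the semigroup. Hence
\begin{equation*}
\nor{e^{At}u_0}{\X}\le \nor{e^{At/k}A^{-1}}{\mathcal{L}(\X)}^k\nor{A^ku_0}{\X}\le C^k h(t/k)^k\nor{u_0}{\D((-A)^k)},
\end{equation*}
which is the claim with $\alpha=k$ and $\lceil\alpha\rceil=k$.

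\textbf{Fractional case.} Let $k=\lceil\alpha\rceil$ and $\theta=\alpha/k\in(0,1]$. Fix $t$ and consider the operator $T_t=e^{At}(-A)^{-k}$, viewed as a map from $\X$ into $\X$. We have $\nor{T_t}{}\le C h(t/k)^k$ from the integer case, and also $\nor{e^{At}}{\mathcal{L}(\X)}\le M$. I would then use the moment inequality for fractional powers of the sectorial-type operator $-A$, presumably what Section~\ref{s:powerinterp} provides:
\begin{equation*}
\nor{(-A)^{\alpha-k}\,y}{}\le C\nor{y}{}^{1-\theta}\nor{(-A)^{-k}y}{}^{\theta}\quad(\text{for }0\le\alpha\le k),
\end{equation*}
applied in the form $\nor{(-A)^{-\alpha}x}\le C\nor{x}^{1-\theta}\nor{(-A)^{-k}x}^{\theta}$. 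We apply it to $x=e^{At}v$ with $v=(-A)^\alpha u_0$, using commutation of the semigroup with fractional powers. This gives
\begin{equation*}
\nor{e^{At}u_0}{}=\nor{(-A)^{-\alpha}e^{At}v}{}\le C\,\big(M\nor{v}{}\big)^{1-\theta}\big(C h(t/k)^k\nor{v}{}\big)^{\theta}=C_\alpha h(t/k)^{\alpha}\nor{(-A)^\alpha u_0}{}.
\end{equation*}
This is bounded by the $\D((-A)^\alpha)$ norm, as desired.

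\textbf{Main obstacle.} The delicate point is justifying the moment inequality in this generality. Here $-A$ is not assumed sectorial in the strict sense, only that $i\R\subset\rho(A)$ and the semigroup is bounded. However, boundedness of the semigroup gives $\nor{\lambda(\lambda-A)^{-1}}{}\le M$ for $\mathrm{Re}\lambda>0$, so $-A$ is a non-negative operator with $0\in\rho(A)$. Balakrishnan's definition of fractional powers and the classical moment inequality (Komatsu; Martínez–Sanz) then apply. I would cite that, or reprove it via Balakrishnan's formula $(-A)^{-\alpha}=\frac{\sin\pi\alpha}{\pi}\int_0^\infty\lambda^{-\alpha}(\lambda-A)^{-1}d\lambda$ for $0<\alpha<1$, splitting the integral at a well-chosen $\lambda_0$ and optimizing. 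The iteration for $\alpha>1$ reduces to the same formula via the semigroup property of fractional powers.
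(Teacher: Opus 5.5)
Your proof is correct. The integer step is the paper's in substance: the paper iterates via Lemma~\ref{lm:iterpower} with $T=e^{tA}$, $B=-A$ and rescales $t\mapsto t/m$, while your factorization $e^{At}=(e^{At/k}A^{-1})^kA^k$ does the same directly using $0\in\rho(A)$. The fractional step, however, takes a genuinely different route.

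The paper interpolates the operator $e^{tA}$ between $\mathcal{L}(\X)$ and $\mathcal{L}(\D(B^m),\X)$ through Lemma~\ref{lm:inter}. That is, it uses the Lions--Peetre interpolation property together with Komatsu's embedding $\D(B^{\theta m})\subset S(\infty,\theta,\X;\infty,\theta-1,\D(B^m))$, the right half of \eqref{sandwichinterp}. This works for any $T\in\mathcal{L}(\X)$, whether or not it commutes with $B$.

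You instead use the commutation of $e^{At}$ with $(-A)^{-\alpha}$ to move the interpolation from the domain side to the target side. Writing $e^{At}u_0=(-A)^{-\alpha}x$ with $x=e^{At}(-A)^{\alpha}u_0$, you only need a pointwise bound for this one vector. The moment inequality $\|(-A)^{-\alpha}x\|\le C\|x\|^{1-\theta}\|(-A)^{-k}x\|^{\theta}$ provides exactly that. It is a $J$-type inequality, corresponding to the left half of \eqref{sandwichinterp} (for the exponent $k-\alpha$) rather than the right half.

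What your route buys is elementarity: no interpolation spaces are needed, and the inequality can be proved by hand. For $0<\alpha<1$ it follows from Balakrishnan's formula, bounding $\|(\lambda-A)^{-1}x\|$ by $(1+M)\|A^{-1}x\|$ on $(0,\lambda_0)$ and by $M\|x\|/\lambda$ on $(\lambda_0,\infty)$, with $\lambda_0=\|x\|/\|A^{-1}x\|$. For $\alpha>1$ you combine this with the integer log-convexity $\|By\|^2\le C\|y\|\,\|B^2y\|$. The paper's route buys generality (no commutation needed), which is not used here.

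One small slip: boundedness of the semigroup gives $\|(\lambda-A)^{-1}\|\le M/\operatorname{Re}\lambda$. So $\|\lambda(\lambda-A)^{-1}\|\le M$ holds for real $\lambda>0$, but not for all $\operatorname{Re}\lambda>0$. Since non-negativity of $-A$ only involves real $\lambda>0$, nothing breaks.
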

\bnp
We first take $\alpha=m \in \N^*$, and apply Lemma~\ref{lm:iterpower} with $T=e^{t A}$ and $B=-A$. Combining with \eqref{e:decayabtrsinterp}, it implies that
    \begin{align*}
        \nor{e^{A mt}u_{0}}{\X}\leq D_{m} h(t)^m\nor{u_{0}}{\D((-A)^m)}, \quad \forall t\geq 0, \ \forall u_0\in \D((-A)^{m}).
    \end{align*}
After rescaling, 
   \begin{align}\label{e:decayabtracintergersbis} 
        \nor{e^{At}u_{0}}{\X}\leq D_m h(t/m)^m\nor{u_{0}}{\D((-A)^m)}, \quad \forall t\geq 0, \ \forall  u_0\in \D((-A)^{m}).
    \end{align}
  Since $m=\lceil m\rceil$, the result for $\alpha$ integer is proved.  For $\alpha>0$ not an integer, we will use interpolation.  Define $m=\lceil \alpha \rceil$. Applying Lemma~\ref{lm:inter} with $T=e^{t A}$, $B=-A$ and $\theta=\alpha/m\in (0,1)$, interpolating between the bound \eqref{e:decayabtracintergersbis} and the uniform bound, we get
    \begin{align*}
    \nor{e^{t A}}{\mathcal{L}(\D((-A)^{\theta m}),\X)}\leq D_{m,\theta} M^{1-\theta}D_m^{\theta} h(t/m)^{\theta m}.
    \end{align*} 
Since $\theta m=\alpha$, this is the expected result.
\enp
\bnp[Proof of Corollary~\ref{cor:decaypol}]
By definition of $\beta_0$, we can pick $\gamma\in (\frac{1}{\alpha},\beta_0)$ so  there exists $C_{\gamma}>0$  that \eqref{e:decayabtrsinterp} holds for $h(t)=\frac{C_{\gamma}}{ t^{\gamma}}$.  In particular, using $\nor{e^{At}}{\mathcal{L}(\X)}\leq M$ for small $t$, we have for another constant $C$
\begin{equation*}
\nor{e^{At}u_{0}}{\X}\leq \frac{C}{\left<t\right>^{\gamma}}\nor{u_{0}}{\D(A)}, \quad \forall t\geq 0,  \ \forall u_0\in \D(A).
\end{equation*}
Denoting $h(t)=\frac{C}{\left<t\right>^{\gamma}}$, notice that $h^{\alpha}\in L^{1}([0,+\infty))$ for $\alpha \gamma>1$.
The application of Theorem~\ref{t:interp} gives the result.
\enp
\begin{remark}
We can typically have $\beta_0=+\infty$ if the decay $h$ is  exponential type $h(t)=e^{-t^{s}}$, which means that we lose an $\e$ derivative with $\e$ arbitrary small. We use the convention $\frac{1}{\beta_0}=0$ if $\beta_0=+\infty$.
\end{remark}
\subsection{Link with resolvent estimates} 
\bnp[Proof of Theorem~\ref{t:perresolv}]
Pick $\gamma\in (\alpha,\beta)$. Denoting $M$ the function in \eqref{mf} in the Appendix below, the bound on the resolvent can be written $M(\eta)\le C (1+\eta^{\alpha})$. Due to Theorem \ref{t:BD} of Batty-Duyckaerts, we obtain
\begin{equation*}
\nor{e^{At}A^{-1}}{\mathcal{L}(\X)}\leq C\left(\frac{\log(t)}{t}\right)^{\frac{1}{\alpha}}\leq C_{\gamma} \frac{1}{t^{\frac{1}{\gamma}}}, \quad t\geq B.
\end{equation*}
In particular, using $\nor{e^{At}}{\mathcal{L}(\X)}\leq C$ for small $t$,  we have
\begin{equation*}
\nor{e^{At}u_{0}}{\X}\leq C \frac{1}{\left<t\right>^{\frac{1}{\gamma}}}\nor{u_{0}}{\D(A)}, \quad \forall t\geq 0.
\end{equation*}
Denoting $h(t)=\dfrac{1}{\left<t\right>^{\frac{1}{\gamma}}}$, we notice that $h^{\beta}\in L^{1}([0,+\infty))$ since $\frac{\beta}{\gamma}>1$.
The application of Theorem~\ref{t:interp} with $\beta$ replacing $\gamma$ gives the result.
\enp
\begin{remark}
    There might be a direct proof of the previous result using functional calculus, without using the semigroup's decay.  This might allow for obtaining finer results in some scale of spaces fitting more closely to the function $M(\eta)$.  For instance, if $M$ has a logarithmic growth, we would expect to have a loss in some logarithmic Sobolev type spaces instead of losing  $\e$ derivatives.  We also expect that the conclusion of Theorem~\ref{t:perresolv} holds with $\beta=\alpha$, maybe under more restrictive assumptions on the space $\X$ and the operator $A$.
\end{remark}
\begin{remark}
In some cases where $A$ is decomposed as $A=A_0+BB^*$ where $A_0$ is skew-adjoint and $B^*B$ is the damping, some resolvent estimates can also be obtained by using the following inequality 
\begin{equation*}
    \nor{x}{\X}\leq M(\lambda)\nor{(A_0 +\lambda \id)x}{\X}+m(\lambda)\nor{B x}{\X}.
\end{equation*}
 Such results appeared in various abstract contexts in Anantharaman-L{\'e}autaud~\cite{AnanthLeautANPDE}, Joly-Laurent~\cite[Appendix]{JL:20} and Chill-Paunonen-Seifert-Stahn-Tomilov~\cite{CPSST:23}. Therefore, it is possible to obtain the existence of periodic solutions as a consequence of this type of resolvent estimates with observability.
\end{remark}
\section{Nonlinear solutions}
\label{s:nonlin}
\begin{theorem}\label{t:nonlin}
Under the same assumptions as in Theorem~\ref{t:variant}, if $F:C([0,T],\X)\longrightarrow W$ (nonlinear) is locally Lipschitz with $F(0)=0$ and
\begin{align}\label{boundF}
    \nor{F(u)-F(v)}{W}\leq C\left(\nor{u}{C([0,T],\X)}+\nor{v}{C([0,T],\X)} \right)\nor{u-v}{C([0,T],\X)},
\end{align}
for $u,v\in C([0,T],\X)$ with  $\nor{u}{C([0,T],\X)}\leq 1$ and  $\nor{v}{C([0,T],\X)} \leq  1$.
Then, there exists $\e>0$ so that for any $f\in W$ with $\nor{f}{W}\leq \e$, there exists an initial data $w_0\in \X$  such that  the solution  of 
\bneqn
\label{e:perNL}
\partial_{t}u&=&Au+F(u)+f,\\
u(0)&=&w_0,
\eneqn
is $T$-periodic. \end{theorem}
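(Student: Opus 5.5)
We argue by a fixed point on the space of periodic trajectories, using the linear theory of Theorem~\ref{t:variant} as the solution operator. For $g\in W$, let $\mathcal{P}(g)\in C([0,T],\X)$ be the unique $T$-periodic mild solution of $\partial_t u=Au+g$ given by Theorem~\ref{t:variant}, i.e.
\begin{equation*}
\mathcal{P}(g)(t)=e^{At}w_0(g)+\int_0^t e^{A(t-s)}g(s)\,ds,\qquad w_0(g)=\sum_{n\geq 0}e^{nAT}\int_0^T e^{A(T-s)}g(s)\,ds .
\end{equation*}
The map $g\mapsto \mathcal{P}(g)$ is linear, since $w_0$ depends linearly on $g$. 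The first step is to check that $\mathcal{P}:W\to C([0,T],\X)$ is bounded. The term $e^{At}w_0(g)$ is controlled by $M\nor{w_0}{\X}\leq MC\nor{g}{W}$ using \eqref{e:boundu0adm}. The Duhamel term is controlled by $M\nor{g}{L^1([0,T],\X)}$. Since $W\subset L^1_{per}([0,T],\X)$ continuously, this gives $\nor{g}{L^1([0,T],\X)}\leq C\nor{g}{W}$. That continuity is automatic for Banach spaces continuously embedded (or, if only the set inclusion is given, by the closed graph theorem, both spaces embedding in $L^1$). Continuity in time comes from the strong continuity of the semigroup. We thus get $\nor{\mathcal{P}(g)}{C([0,T],\X)}\leq K\nor{g}{W}$.

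Next, we seek $u=\mathcal{P}(F(u)+f)$, a fixed point of $\Phi(u):=\mathcal{P}(F(u))+\mathcal{P}(f)$ on the closed ball $B_r=\{u\in C_{per}([0,T],\X):\nor{u}{C([0,T],\X)}\leq r\}$ with $r\leq 1$. Here $C_{per}$ means $u(0)=u(T)$, which is closed in $C([0,T],\X)$. Such a fixed point satisfies $\partial_t u=Au+F(u)+f$ with $u(0)=u(T)=:w_0$. Extending $f$ and $F(u)$ periodically, the solution of \eqref{e:perNL} is $T$-periodic. This extension is legitimate if $F(u)$ depends only on $u|_{[0,T]}$ as an element of $W$, which is how the statement is phrased.

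The key estimates follow from \eqref{boundF} with $v=0$ and $F(0)=0$. For $u,v\in B_r$,
\begin{equation*}
\nor{\Phi(u)}{C([0,T],\X)}\leq K C r^2+K\e,\qquad \nor{\Phi(u)-\Phi(v)}{C([0,T],\X)}\leq 2KCr\,\nor{u-v}{C([0,T],\X)} .
\end{equation*}
Choose $r=\min(1,1/(4KC))$ and $\e=r/(2K)$. Then $\Phi$ maps $B_r$ into itself and is a contraction with constant $1/2$. Banach's fixed point theorem gives the periodic solution, with $\nor{u}{C([0,T],\X)}\leq 2K\e$, and hence $\nor{w_0}{\X}\lesssim\nor{f}{W}$.

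The main obstacle is the bookkeeping of the first step rather than the contraction. One must ensure that $\mathcal{P}$ lands in $C([0,T],\X)$ with a bound in terms of $\nor{\cdot}{W}$, namely the continuous embedding $W\hookrightarrow L^1$. One must also ensure that $\Phi(u)$ is genuinely periodic, which holds because $\mathcal{P}(g)(T)=\mathcal{P}(g)(0)$ for every $g\in W$ by Theorem~\ref{t:variant}. If the embedding constant were not available directly, we would invoke the closed graph theorem as indicated above.
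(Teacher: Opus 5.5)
Your proposal is correct and follows essentially the same route as the paper, which also bounds the linear periodic solution operator $g\mapsto T(g)$ in $C([0,T],\X)$ by $C\nor{g}{W}$ (via the bound on $w_0$ and the embedding $W\subset L^1([0,T],\X)$) and then runs a contraction argument for $u\mapsto T(f+F(u))$ on a small closed ball of $C_{per}([0,T],\X)$, using the Lipschitz bound on $F$ with $v=0$. Your explicit choice of $r$ and $\e$, and your remarks on the continuity of the embedding and on the periodicity of the fixed point, spell out details the paper leaves implicit.
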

\bnp
We denote $L$ the linear continuous operator defined from $W$ to $\X$ described in Theorem~\ref{t:variant}, that is $L(g)=w_0$ so that the solution $u$ of 
\bneq
\partial_{t}u(t)&=&Au(t)+g(t), \quad t \in [0,+\infty),\\
u(0)&=&w_0
\eneq
is periodic. We denote $u=T(g)$,  the formula $(T(g))(t)=e^{tA}w_0+\int_0^t e^{A(t-s)}g(s)ds$ allows to perform the estimate
\begin{equation}\label{e:boundT}
    \nor{T(g)}{C([0,T],\X)}\leq C \nor{w_0}{\X}+ C\nor{g}{L^1([0,T],\X)}\leq C\nor{g}{W},
\end{equation}
where we have used \eqref{e:boundu0adm} and the embedding of $W$ in $L^1([0,T],\X)$. We want to find a solution of $u= T(f+F(u))$.  We will prove that $u\mapsto G(u):=T(f+F(u))$ is a contraction on the closed ball $B=\overline{B}_{C_{per}([0,T],\X)}(0,\eta)$ of $C_{per}([0,T],\X)$ for $\eta\leq 1$  to be chosen small enough.
For $u\in B$, we have, using \eqref{e:boundT} and \eqref{boundF} (with $v=0$),
\begin{equation*}
   \nor{G(u)}{C([0,T],\X)}=    \nor{T(f+F(u))}{C([0,T],\X)}\leq C\nor{f+F(u)}{W}\leq C\nor{f}{W}+C \nor{u}{C([0,T],\X)}^2\leq C\e+C\eta^2.
\end{equation*}
In particular, for $\eta=2C\e$ and $\e$ small enough, $G$ maps $B$ into itself. Similar estimates give for $u, v\in B$,
\begin{equation*}
   \nor{G(u)-G(v)}{C([0,T],\X)}=    \nor{T(F(u)-F(v))}{C([0,T],\X)}\leq C\nor{F(u)-F(v)}{W}\leq C\eta \nor{u-v}{C([0,T],\X)}.
\end{equation*}
Now, for $\e$ and therefore $\eta$ small enough, $G$ is contracting in the complete space $B$ and has a fixed point.  Note that we have used the fact that if $u$ is periodic, $G(u)$ is also periodic by the definition of $T$.  Also, a fixed point of $T$ is a solution of  \eqref{e:perNL} as expected.
\enp
\section{Examples}
\label{s:example}
\subsection{Damped wave equations}
\label{s:examplewave}
In this subsection, we are considering the following internally damped wave equation on a compact connected Riemannian manifold with (or without) boundary $\M$,
\bneqn
\label{dampedwave}
\partial_t^2 u-\Delta_g u+a(x)\partial_t u&=&0\quad \textnormal{ on }\R_+\times \M\\
u&=&0,\quad \textnormal{ on } \R_+\times \d\M\\
(u,\partial_t u)(0)&=&(u_0,u_1)\quad \textnormal{ on } \M.
\eneqn
where $a\in C^0(\M,\R_+)$ is not zero. In the case $\d\M=\emptyset$, the constant function $u(t)=1$ is never damped, so we are led to define on $\X=H^1_0\times L^2(\M)$ the seminorm
$$
|(u_0,u_1)|_\X^2 = \|\nabla u_0 \|_{L^2(\M)}^2 + \| u_1 \|_{L^2(\M)}^2,
$$
and the possible decay
\begin{equation}
\label{decaydampedwaveh}
 \left|(u,\partial_t u)(t)\right|_{H^1\times L^2(\M)} \leq h(t)  \nor{(u_0,u_1)}{H^2\times H^1(\M)}, \quad \forall (u_0,u_1)\in (H^2\cap H^1_0)\times H^1_0(\M).
\end{equation}
In each case, as in Corollary~\ref{cor:decaypol}, we will define an exponent $\beta_0$ as the best polynomial decay
    \begin{equation}
    \label{e:defbeta0waves}
       \beta_0:= \sup \left\{\beta\in \R_+; \exists C_{\beta}>0  \textnormal{ so that \eqref{decaydampedwaveh} holds for }h(t)=\frac{C_{\beta}}{ t^{\beta}} \ \right\}.  
    \end{equation}
We will consider the following situations:
\begin{enumerate}
\item 
The torus with flat geometry: $\M=\T^2$: there are many studies in this geometry. Anantharaman-L\'eautaud~\cite{AnanthLeautANPDE} proved \eqref{decaydampedwaveh} with $h(t)=\frac{C}{\sqrt{t}}$ for any non zero $a\in C^0(\M,\R_+)$ and $h(t)=\frac{C_{\e}}{t^{1-\e}}$ with some $\e>0$ depending on additional regularity assumption on $a$. We also refer to Burq-Hitrik~\cite{BurqHitrikMRL} for earlier results and also Sun~\cite{S:23} and Datchev-Kleinhenz-Prouff~\cite{DKP:25} for more precise recent results with geometrical assumptions on the damping.  In that case, Anantharaman-L\'eautaud~\cite{AnanthLeautANPDE} provides $\beta_0\in [1/2,1]$ in general, and some of these results state $\beta_0\geq 1-\e$ with an $\e$ depending on the geometric assumptions on the damping $a$.  We refer to the above references for more details.
    \item \label{i:convex}A bounded domain with convex obstacles with flat geometry: $\M=\Omega\setminus (\mathcal{O}_1\cup \mathcal{O}_2)$ where $\Omega\subset \R^d$ is a smooth domain and $\mathcal{O}_1$ and $\mathcal{O}_2$ are smooth disjoints strictly convex obstacles so that the convex hull $\operatorname{convhull}(\mathcal{O}_1\cup \mathcal{O}_2)$ of the obstacles is contained in $\Omega$. We assume $a\in C^1(\M,\R_+)$ so that $a(x)\geq \gamma>0$ for $x$ in a neighborhood of $\d \Omega$. Notice that there exists one periodic geodesic that does not meet the support of the damping.  The bound with $h(t)=Ce^{-ct^{1/3}}$  was proved in Joly-Laurent~\cite{JL:20} using as a black box some results for Schr\"odinger from Burq~\cite{B:93} and Burq-Zworski \cite{BZ:04}. Note that more obstacles can also be allowed with more assumptions. In that case, we have $\beta_0=+\infty$.
\item 
The peanut of rotation: We consider a compact two-dimensional manifold without boundary.  We assume that 
the damping $a$ is effective, that is uniformly positive, everywhere 
except in the central part of the manifold.  This part is a manifold of negative curvature and invariant by rotation along the $y-$axis. More precisely, let us set this part to be equivalent to the cylinder endowed with the metric $g(y,\theta)={\rm d}y^2+\cosh^2(y){\rm d}\theta^2$.
This central part admits a unique (up to change of orientation) periodic geodesic which is unstable; any other geodesic meets the support of the damping. This example has been studied, for example, in Christianson-Schenck-Vasy-Wunsch~\cite{CSVW:14}, Schenck~\cite{S:11}, and the references therein. In that case, the decay was proved with $h(t)=Ce^{-ct^{1/2}}$ and we have $\beta_0=+\infty$.
\item 
The open book: We consider the torus $\M=\T^2$ with flat geometry. The damping $a$ is 
assumed to depend only on the first coordinate and to be of the type 
$a(x)=|x_1|^{\gamma }$ with $\gamma>0$ to be chosen small enough.  In this case, 
there is a unique (up to change of orientation) geodesic which does not meet 
the support of the damping.  This example (actually a more general geometry) has been studied in L\'eautaud-Lerner~\cite{LL:17} and proved the decay $h(t)=\frac{C}{t^{1+\frac{1}{\gamma}}}$, that is $\beta_0\geq 1+\frac{1}{\gamma}$.
\item 
$\M$ is a compact connected Riemannian surface without boundary, whose geodesic flow has the Anosov property (for instance with negative Gauss curvature) with $a\in C^{\infty}(\M,\R_+)$ not zero. Dyatlov-Jin-Nonnenmacher~\cite{DJN:22} (see Jin~\cite{J:20} for earlier result for hyperbolic surfaces) obtained \eqref{decaydampedwaveh} with $h(t)=e^{-t}$, that is $\beta_0=+\infty$. 
We also refer to other results in any dimension with pressure conditions by Schenck~\cite{S:10}, following 
ideas of Anantharaman~\cite{AnantharamanAnnals}.
\end{enumerate}
Note that several authors also studied boundary damping, which might enter in our abstract result, see Phung~\cite{P:07} or Wang~\cite{W:23} for the cylindrical domain. 
\begin{theorem}
\label{t:dampedwave}
In the previous examples, with $\beta_0$ defined in \eqref{e:defbeta0waves}, fix $k\in \N$, $k>1/\beta_0$ and $T>0$. Let $f\in W^{k,1}_{per,0}([0,T],L^2(\M)))$. In the case $\d\M=\emptyset$, we make the additional assumption $\int_{\M}f(t,x)dx_g=0$ for all $t\in [0,T]$. Then, there exists $(u_0,u_1)\in H^1_0\times L^2(\M)$ so that the unique solution $u\in C^1(\R_+,L^2(\M))\cap C^0(\R_+,H^1_0(\M))$ to 
\bneqn
\label{dampedwaveforced}
\partial_t^2 u-\Delta_g u+a(x)\partial_t u&=&f\quad \textnormal{ on }\R_+\times \M\\
u&=&0,\quad \textnormal{ on } \R_+\times \d\M\\
(u,\partial_t u)(0)&=&(u_0,u_1)\quad \textnormal{ on } \M,
\eneqn
is $T$-periodic.
\end{theorem}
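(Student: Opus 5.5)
We write the equation as a first order system $\partial_t U = AU + F$ on $\X$, with $U=(u,\partial_t u)$, $A=\begin{pmatrix}0 & \id\\ \Delta_g & -a\end{pmatrix}$ and $F=(0,f)$. We then apply the $W^{k,1}_{per,0}$ part of Theorem~\ref{t:interp} (equivalently Corollary~\ref{cor:decaypol}), with $\alpha=k$. The space is $\X=H^1_0\times L^2(\M)$ when $\d\M\neq\emptyset$. When $\d\M=\emptyset$ it is the quotient-free subspace
\begin{equation*}
\X_0=\left\{(u_0,u_1)\in H^1\times L^2(\M);\ \int_\M u_1+a u_0\,dx_g=0,\ \int_\M u_0 \,dx_g=0\right\},
\end{equation*}
or an equivalent normalization. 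On $\X_0$ the seminorm $|\cdot|_\X$ is a norm, and the constant mode (the kernel of $A$) is removed. The quantity $\int_\M (\partial_t u + a u)\,dx_g$ is conserved by the free flow. The zero-mean hypothesis on $f$ keeps $\int_\M f\,dx_g=0$, so the forced flow with $F=(0,f)$ preserves this constraint. We may need to adjust the first-component constraint by projecting onto $\ker A^\perp$ along the natural spectral decomposition. This adjustment is the technical point for the boundaryless case.

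Next we check Assumptions~\ref{assumptions}. $A$ generates a contraction semigroup, since $\frac{d}{dt}E=-\int a|\partial_t u|^2\le 0$, so $M=1$. For $i\R\subset\rho(A)$ (on $\X_0$ in the boundaryless case), $A$ has compact resolvent, so it suffices to exclude eigenvalues $i\eta$. An eigenfunction gives $a\,u=0$ with $-\Delta_g u=\eta^2 u$, so $u$ vanishes on an open set. Unique continuation then forces $u=0$, except when $\eta=0$ and $\d\M=\emptyset$, where $u$ is constant; that case has been removed. The decay \eqref{decaydampedwaveh} is exactly \eqref{e:decayabtrsinterp} with $\D(A)=(H^2\cap H^1_0)\times H^1_0$, up to equivalence of the graph norm.

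By the definition \eqref{e:defbeta0waves} and $k>1/\beta_0$, we pick $\gamma<\beta_0$ with $k\gamma>1$ and set $h(t)=C\langle t\rangle^{-\gamma}$. Then $h^k\in L^1$. Theorem~\ref{t:interp} (via Lemma~\ref{lm:gainder} with $E_3=\X$, $p=1$) applies to $F=(0,f)\in W^{k,1}_{per,0}([0,T],\X)$, since $\nor{(0,f)}{\X}=\nor{f}{L^2}$. This requires the density of $C^k_{per,0}([0,T],\D(A^k))$ in $W^{k,1}_{per,0}([0,T],\X)$. We obtain it by mollifying in time and regularizing in space with $(\id-\e A)^{-k}$, which commutes with time derivatives and preserves the vanishing conditions. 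In the boundaryless case the regularized functions also keep the zero-mean constraint, because the resolvent preserves $\X_0$.

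The result is a periodic $U=(u,\partial_t u)\in C^0_{per}(\X)$. Its first component is the periodic solution of \eqref{dampedwaveforced} with data $(u_0,u_1)=w_0$, and it has the stated regularity from the mild-solution theory. We expect the main obstacle to be the careful set-up of the invariant subspace and the check of $0\in\rho(A)$ in the boundaryless case; everything else is a direct application of the abstract results.
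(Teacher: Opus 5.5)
\textbf{Gap in the boundaryless case.} Your overall route is the paper's: write the equation as $\d_t U=\A U+F$ with $F=(0,f)$, apply Corollary~\ref{cor:decaypol} (Theorem~\ref{t:interp} with $\alpha=k$), and when $\d\M=\emptyset$ work on a complement of $\operatorname{Ker}\A=\operatorname{Vect}(1,0)$. However, the subspace $\X_0$ you write down is wrong, and the abstract theorem cannot be applied on it.

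The second constraint $\int_\M u_0\,dx_g=0$ is not preserved by the flow. The quantity $\int_\M(\d_t u+au)\,dx_g$ is conserved, so on your set $\frac{d}{dt}\int_\M u\,dx_g=\int_\M \d_t u\,dx_g=-\int_\M a u\,dx_g$. This is nonzero as soon as $\int_\M a u_0\neq 0$, which is possible inside $\X_0$ whenever $a$ is not constant; take for instance $u_0$ of mean zero close to $a-\bar a$, and $u_1=-au_0$. Hence $e^{t\A}$ does not act on $\X_0$. For the same reason, your later claim that the resolvent preserves $\X_0$ is false.

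Note also that for the standard inner product of $H^1\times L^2$, the orthogonal complement of $\operatorname{Ker}\A$ is exactly $\{\int_\M u_0=0\}$. So ``projecting onto $\ker A^{\perp}$'' would reproduce the same mistake; the complement must be the spectral one.

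\textbf{The fix.} Drop that constraint and take $\dot\X=\{(u_0,u_1)\in H^1\times L^2(\M);\ \int_\M(u_1+au_0)\,dx_g=0\}$. This is precisely $(\id-\Pi_0)\X$ with $\Pi_0$ given in Lemma~\ref{lm:projdamped}. It has the properties you need:
\begin{itemize}
\item it is invariant under the semigroup and the resolvent, since both commute with $\Pi_0$;
\item it contains $F(t)=(0,f(t))$, because $\int_\M f(t)\,dx_g=0$;
\item $|\cdot|_\X$ is already a norm on it, since $(c,0)\in\dot\X$ forces $c\int_\M a=0$, hence $c=0$.
\end{itemize}
This is what the paper does, using in addition $e^{t\A}=e^{t\dot\A}(\id-\Pi_0)+\Pi_0$ to see that the kernel component of any solution is constant. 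With this correction, the rest of your argument goes through: the spectral condition via unique continuation, the conversion of the decay to the graph norm of $\dot\A$, and the choice $h(t)=C\langle t\rangle^{-\gamma}$ with $k\gamma>1$.

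\textbf{A minor point on density.} You address the density hypothesis of Lemma~\ref{lm:gainder}, which the paper does not check. However, convolution in time does not preserve $f^{(j)}(0)=0$ exactly. First multiply $f$ by $1-\chi(t/\delta)$, with $\chi$ a periodic cutoff near $T\Z$; the error is controlled by $\int_{|t|\le\delta}|f^{(k)}|$ because $|f^{(j)}(t)|\le |t|^{k-1-j}\int_0^t|f^{(k)}|$. Then mollify in time and apply $(\id-\e\A)^{-k}$ in space.
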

\bnp
We first deal with the case $\d\M\neq \emptyset$, which is slightly simpler since $|\cdot|_{\X}$ is a norm on $\X$. For $\X=H^1_0\times L^2(\M)$, we denote $\A=\begin{pmatrix}
0 & Id\\
\Delta_g & -a(x)
\end{pmatrix}$ with domain $\D(\A)=\left(H^2\cap H^1_0(\M)\right) \times H^1_0(\M)$, the generator of the damped wave. Corollary~\ref{cor:decaypol} implies the existence of periodic solutions for a source term $F=(0,f)\in W^{k,1}_{per,0}([0,T],\X)$. Proving the announced result in the case $\d\M\neq \emptyset$.

Now, we treat the case $\d\M=\emptyset$ where we have to pay attention to the Kernel of $\A$.  By Lemma \ref{lm:projdamped} below, we have $\operatorname{Ker}(\A)=  \operatorname{Vect}(e_0,0)$ where $e_0$ is the constant function normalized in $L^2(\M)$. This leads us to introduce the spectral projector of $\A$ onto the spectral subspace of $\A$ associated to the eigenvalue $0$, namely 
$$
\Pi_0 = \frac{1}{2i\pi}\int_{\gamma} (z Id - \A)^{-1} dz \in \mathcal{L}(\X),
$$
where $\gamma$ denotes a positively oriented circle centered on $0$ with a radius so small that $\sigma(\A)\cap \gamma =\emptyset$ and $0$ is the single eigenvalue of $\A$ in the interior of $\gamma$. 

We set $\dot{\X} = (\id - \Pi_0)\X$ and equip this space with the norm
$$
\nor{(u_0 , u_1)}{\dot{\X}}^2 := |(u_0 , u_1)|_{\X}^2 
= \|\nabla u_0\|_{L^2(\M)}^2 + \|u_1 \|_{L^2(\M)}^2 ,
$$
and associated inner product. This is indeed a norm on $\dot{\X}$ since $\|(u_0 , u_1)\|_{\dot{\X}} =0$ is equivalent to $(u_0 , u_1)\in \operatorname{Ker}(\A)= \Pi_0\X$. Besides, we set $\dot{\A} = \A|_{\dot{\X}}$ with domain $D(\dot{\A}) = D(\A) \cap \dot{\X}$. 
Remark that $\sigma(\dot{\A}) = \sigma(\A) \setminus \{0\}$ and thus $\sigma(\dot{\A}) \cap i\R = \emptyset$, see \cite[Lemma 4.2.]{AnanthLeautANPDE} for a description of the spectrum of $\A$. 

Our forced equation~\eqref{dampedwaveforced} can be written in an abstract way as in \eqref{PP} with $F=(0,f)$.  The precise description of the projector $\Pi_0$ has been written by L\'eautaud in \cite{L:23} that we reproduce in Lemma~\ref{lm:projdamped} below. In particular, the zero-means assumption on $f(t)$ for every $t$ provides $\Pi_0 F(t)=0$ for all $t\in [0,T]$. This means that $F(t)\in \dot{\X}$ for any $t\in [0,T]$. The decomposition of the semigroup \eqref{e:decomposemigroup} in Lemma~\ref{lem:semigroup} below gives, denoting $U_0=(u_0,u_1)$, that for any $t\in [0,T]$, we have
\begin{align*}
U(t)=&(u(t),\partial_t u(t))= e^{\A t}U_{0}+\int_{0}^{t}e^{\A(t-s)}F(s)ds\\
=&e^{t\dot{\A}} (\id -\Pi_0)U_0 +\int_{0}^{t}e^{(t-s)\dot{\A}}(\id -\Pi_0) F(s)ds \\
&+\Pi_0 (u_0,u_1)+  \int_{0}^{t}\Pi_0 F(s)ds\\
=&e^{t\dot{\A}} (\id -\Pi_0)U_0 +\int_{0}^{t}e^{(t-s)\dot{\A}} F(s)ds +\Pi_0 (u_0,u_1).
\end{align*}
It means that the component along $\operatorname{Ker}(\A)$ is constant.  So finding a periodic solution is equivalent to finding a periodic solution for the operator $\dot{\A}$ on $\dot{\X}$.  But, we check that if $C>0$ is so that $\nor{a(x)u_1}{L^{2}(\M)}\leq C \nor{u_1}{L^2(\M)}$, we have for any $(u_0,u_1)\in \D(\A)$,
\begin{align*}
  \nor{(u_0,u_1)}{H^2\times H^1(\M)}\leq &\nor{(u_0,u_1)}{H^1\times L^2(\M)}+ \nor{\Delta_g u_0}{L^2}+\nor{u_1}{H^1}\\
 \leq  & \nor{(u_0,u_1)}{H^1\times L^2(\M)}+ \nor{\Delta_g u_0-a(x)u_1}{L^2}+ C\nor{u_1}{L^2}+\nor{u_1}{H^1}\\
 \leq & (C+2)\nor{(u_0,u_1)}{\D(\A)},
\end{align*}
where we have used the norm for $\D(\A)$, $\nor{(u_0,u_1)}{\D(\A)}=\nor{(u_0,u_1)}{H^1\times L^2(\M)}+\nor{u_1}{H^1_0}+\nor{\Delta u_0-a(x)u_1}{L^2}$. In particular, the decay \eqref{decaydampedwaveh} can be written
\begin{equation*}
\nor{e^{t\dot{\A}}U_0}{\dot{\X}} \leq C h(t)  \nor{U_0}{\D(\A)}, \quad \forall U_0=(u_0,u_1)\in D(\dot{\A}).
\end{equation*}
So, we are in conditions to apply Corollary~\ref{cor:decaypol} and conclude.
\enp
Note that in the case $\d\M=\emptyset$, we have lost the uniqueness of the periodic function, but only up to the one dimensional space $\operatorname{Vect}(e_0,0)$. See also Remark~\ref{rk:uniqKG}.  During the proof, we have used the following description of the semigroup $\A$.  We follow the notations introduced in the proof. 
\begin{lemma}[Lemma~4.3 of Anantharaman-L{\'e}autaud~\cite{AnanthLeautANPDE}]
\label{lem:semigroup}Assume $\d\M=\emptyset$. The operator $\dot{\A}$ generates a contraction $C^0$-semigroup on~$\dot{\X}$, denoted $(e^{t\dot{\A}})_{t\geq 0}$. Moreover, the operator $\A$ generates a bounded $C^0$-semigroup on~$\X$, denoted $(e^{t\A})_{t\geq 0}$ and the unique solution to~\eqref{dampedwave} is given by $(u, \d_t u )(t) = e^{t\A}(u_0,u_1)$. Finally,  we have
\begin{equation}
\label{e:decomposemigroup}
e^{t\A}= e^{t\dot{\A}} (\id -\Pi_0)  + \Pi_0  , \quad \text{for all } t\geq 0 .
\end{equation}
\end{lemma}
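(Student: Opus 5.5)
The plan is to use the spectral projector $\Pi_0$ to split $\X=\Pi_0\X\oplus\dot{\X}$ and treat each summand separately. Three facts are needed for this.

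\textbf{Step 1: invariance.} I first show that $\Pi_0$ commutes with $\A$ and hence with its resolvent, by the Riesz integral formula. So $\dot{\X}=(\id-\Pi_0)\X$ is closed and invariant, and $\D(\dot{\A})=\D(\A)\cap\dot{\X}$ is dense in $\dot{\X}$.

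\textbf{Step 2: $\dot{\A}$ generates a contraction semigroup.} I use Lumer--Phillips on $\dot{\X}$ with the inner product $\langle (u_0,u_1),(v_0,v_1)\rangle=\int\nabla u_0\cdot\nabla\bar v_0+u_1\bar v_1$.
\begin{itemize}
\item \emph{Dissipativity.} For $U\in\D(\dot{\A})$, integration by parts on the closed manifold gives
\[
\operatorname{Re}\langle\dot{\A}U,U\rangle=\operatorname{Re}\Big(\int\nabla u_1\cdot\nabla\bar u_0+\int(\Delta_g u_0-au_1)\bar u_1\Big)=-\int a|u_1|^2\leq 0 .
\]
\item \emph{Range condition.} I need $\lambda-\dot{\A}$ onto $\dot{\X}$ for some $\lambda>0$. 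Since $\sigma(\A)$ is discrete, $\lambda\in\rho(\A)$ for $\lambda>0$ (this can also be checked directly). The resolvent $(\lambda-\A)^{-1}$ commutes with $\Pi_0$, so it maps $\dot{\X}$ into $\D(\A)\cap\dot{\X}$. Hence $\lambda-\dot{\A}$ is surjective on $\dot{\X}$.
\end{itemize}
Lumer--Phillips then gives the contraction semigroup. The main obstacle I expect is making sure $|\cdot|_\X$ is a genuine norm on $\dot{\X}$, equivalent to the $H^1\times L^2$ norm there, so that $\dot{\X}$ is a Hilbert space. For this I use the explicit description of $\Pi_0$ from Lemma~\ref{lm:projdamped}. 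The kernel of $|\cdot|_\X$ is $\operatorname{Vect}(e_0,0)=\Pi_0\X$. Since $\Pi_0\X$ is one-dimensional and $\dot{\X}$ is a closed complement to it, a Poincar\'e/compactness argument gives the equivalence.

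\textbf{Step 3: generation and decomposition on $\X$.} On $\Pi_0\X=\operatorname{Ker}\A$ we have $\A=0$ (if $0$ were not semisimple this would fail, but Lemma~\ref{lm:projdamped} gives $\operatorname{Ran}\Pi_0=\operatorname{Ker}\A$). So $\A$ is the direct sum of $\dot{\A}$ and $0$ relative to the bounded projector $\Pi_0$. The family $S(t):=e^{t\dot{\A}}(\id-\Pi_0)+\Pi_0$ is therefore a $C^0$-semigroup, bounded by $\|\id-\Pi_0\|+\|\Pi_0\|$. I check its generator by differentiating at $t=0$ on $\D(\A)=\D(\dot{\A})\oplus\Pi_0\X$; the result is exactly $\A$. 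This proves that $\A$ generates a bounded semigroup and that \eqref{e:decomposemigroup} holds.

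\textbf{Step 4: mild solutions.} Finally, $(u,\partial_t u)(t)=e^{t\A}(u_0,u_1)$ is the unique solution of \eqref{dampedwave}, by the standard correspondence between classical and mild solutions for generators, followed by density.
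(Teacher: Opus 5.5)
The paper does not prove this lemma; it simply quotes it from Anantharaman--L\'eautaud. Your argument is correct and is the standard proof: Lumer--Phillips for $\dot{\A}$ on $(\dot{\X},|\cdot|_{\X})$, followed by the direct-sum splitting along the Riesz projector $\Pi_0$. You correctly handle the one delicate point: $0$ must be a semisimple eigenvalue, i.e.\ $\operatorname{Ran}\Pi_0=\operatorname{Ker}\A$. This is needed both for $|\cdot|_{\X}$ to be a norm on $\dot{\X}$ and for $\A\Pi_0=0$, and you obtain it from the explicit formula of Lemma~\ref{lm:projdamped} (equivalently, $\Delta_g u_0=a$ has no solution because $\int_{\M}a>0$).

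Two small adjustments are needed.
\begin{itemize}
\item The uniform bound on $e^{t\A}$ in the $H^1\times L^2$ norm is $C\|\id-\Pi_0\|+\|\Pi_0\|$, not $\|\id-\Pi_0\|+\|\Pi_0\|$. Here $C$ is the equivalence constant between $|\cdot|_{\X}$ and the $H^1\times L^2$ norm on $\dot{\X}$.
\item When identifying the generator $G$ of $e^{t\dot{\A}}(\id-\Pi_0)+\Pi_0$, you should also check $\D(G)\subset\D(\A)$, not only that $G=\A$ on $\D(\A)$. This follows from the direct-sum structure, or from the fact that $\lambda-\A$ is onto while $\lambda-G$ is injective for large $\lambda$.
\end{itemize}
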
 
\begin{lemma}[Lemma 2.9 of L{\'e}autaud~\cite{L:23}]
\label{lm:projdamped}Assume $\d\M=\emptyset$.
Assume $a\geq 0$ and $a$ does not vanish identically.
Then, we have $\operatorname{Ker}(\A)=  \operatorname{Vect}(e_0,0)$ where $e_0$ is the constant function normalized in $L^2(\M)$ and the spectral projector of $\A$ associated to the eigenvalue $0$ writes:
$$
 \Pi_{0}
\left(
\begin{array}{c}
u_0 \\ u_1
\end{array}
\right) 
=\left(\frac{1}{\int_{\M} a} \int_M (a(x)u_0(x) + u_1(x))dx_g\right)
\left(
\begin{array}{c}
1 \\ 0
\end{array}
\right).
$$
\end{lemma}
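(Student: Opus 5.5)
The statement to prove is Lemma~\ref{lm:projdamped}: the kernel of $\A$ is one-dimensional and the spectral projector $\Pi_0$ has the explicit form above. The plan has three steps: compute the kernel, show that $0$ is a semisimple eigenvalue, and then identify $\Pi_0$ as the projection onto $\operatorname{Ker}(\A)$ along $\operatorname{Ran}(\A)$.

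\emph{Step 1: the kernel.} If $\A(u_0,u_1)=0$, the first row gives $u_1=0$, and the second row then gives $\Delta_g u_0 - a u_1=\Delta_g u_0=0$. Since $\M$ is compact, connected and has no boundary, multiplying by $\bar u_0$ and integrating yields $\nor{\nabla u_0}{L^2}=0$. Hence $u_0$ is constant, and $\operatorname{Ker}(\A)=\operatorname{Vect}(e_0,0)$.

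\emph{Step 2: the range and semisimplicity.} Introduce the linear form
$$\ell(u_0,u_1)=\int_\M (a u_0+u_1)\,dx_g.$$
For $(v_0,v_1)=\A(u_0,u_1)$ we have $v_0=u_1$ and $v_1=\Delta_g u_0-a u_1$. Since $\int_\M\Delta_g u_0\,dx_g=0$, this gives $\ell(v_0,v_1)=\int_\M (a u_1+\Delta_g u_0-a u_1)\,dx_g=0$, so $\operatorname{Ran}(\A)\subset\operatorname{Ker}\ell$.

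For the reverse inclusion, take $(v_0,v_1)$ with $\ell(v_0,v_1)=0$. Set $u_1=v_0\in H^1$ and solve $\Delta_g u_0=v_1+a v_0$. The right-hand side has zero mean precisely because $\ell(v_0,v_1)=0$, so this equation is solvable by the Fredholm alternative for $\Delta_g$ on a closed manifold, with $u_0\in H^2$. Hence $\operatorname{Ran}(\A)=\operatorname{Ker}\ell$, which is closed.

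Next, $\ell(1,0)=\int_\M a>0$ because $a\geq0$ and $a\not\equiv0$. So $\operatorname{Ker}(\A)\cap\operatorname{Ran}(\A)=\{0\}$, and this gives the direct sum $\X=\operatorname{Ker}(\A)\oplus\operatorname{Ran}(\A)$. It also shows that $0$ has no Jordan chain: $\A^2x=0$ forces $\A x\in\operatorname{Ker}\cap\operatorname{Ran}=\{0\}$. Finally, $0$ is isolated in $\sigma(\A)$, since $\A$ has compact resolvent (alternatively, \cite[Lemma 4.2]{AnanthLeautANPDE} describes the spectrum).

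\emph{Step 3: identifying $\Pi_0$.} With this decomposition, the Laurent expansion of $(z-\A)^{-1}$ at $0$ has only a simple pole. Therefore the Riesz projector $\Pi_0$ is the projection onto $\operatorname{Ker}(\A)$ along $\operatorname{Ran}(\A)$. The formula $\Pi_0 x=\frac{\ell(x)}{\ell(1,0)}(1,0)$ has exactly this property: it is the identity on $(1,0)$ and vanishes on $\operatorname{Ker}\ell$. This is the claimed expression.

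The main point requiring care is the pole-order argument. The cleanest route is to note that on the invariant complement $\operatorname{Ran}(\A)$, the restriction $\A|_{\operatorname{Ran}(\A)}$ is invertible: it is injective there by Step 2, and it is surjective onto $\operatorname{Ran}(\A)$ by the direct sum. Hence the resolvent is holomorphic near $0$ on that piece. On $\operatorname{Ker}(\A)$ the resolvent equals $z^{-1}$. Integrating over the contour $\gamma$ then gives the identity on $\operatorname{Ker}(\A)$ and $0$ on $\operatorname{Ran}(\A)$.
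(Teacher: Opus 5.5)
Your proof is correct. The paper does not prove this lemma itself; it quotes it from L\'eautaud~\cite{L:23} (Lemma 2.9). So your argument is a self-contained replacement rather than a variant of something in the text.

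Your proof has three ingredients, and together they are exactly what is needed:
\begin{itemize}
\item The functional $\ell(u_0,u_1)=\int_\M(au_0+u_1)\,dx_g$ annihilates $\operatorname{Ran}(\A)$. Equivalently, $\int_\M(au+\partial_t u)\,dx_g$ is conserved by the damped flow.
\item $\operatorname{Ran}(\A)=\operatorname{Ker}\ell$, because $\Delta_g u_0=v_1+av_0$ is solvable for zero-mean data.
\item $\ell(1,0)=\int_\M a>0$, so $\operatorname{Ker}(\A)$ is transverse to $\operatorname{Ran}(\A)$.
\end{itemize}
This is also the structure the paper relies on later, when it uses $\Pi_0(0,f)=0$ for forcing terms $f$ with zero mean.

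Two small points could be tightened.
\begin{itemize}
\item Splitting the resolvent in Step 3 requires the decomposition $\X=\operatorname{Ker}(\A)\oplus\operatorname{Ran}(\A)$ to reduce $\A$. This holds because the projection $P$ onto $\operatorname{Ker}(\A)$ along $\operatorname{Ran}(\A)$ maps $D(\A)$ into itself (its range consists of constants) and satisfies $\A P=0=P\A$ on $D(\A)$. It is worth stating this explicitly.
\item The appeal to compact resolvent is unnecessary. The restriction $\A|_{\operatorname{Ran}(\A)}$ is closed, since its graph is the graph of $\A$ intersected with the closed set $\operatorname{Ran}(\A)\times\operatorname{Ran}(\A)$. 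It is also bijective onto $\operatorname{Ran}(\A)$, hence boundedly invertible. Openness of its resolvent set then already shows that $0$ is an isolated point of $\sigma(\A)$.
\end{itemize}
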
 
\begin{remark}\label{rk:uniqKG}
In the case $\d\M=\emptyset$, it would be possible to avoid this problem of the undamped constants by considering the Klein-Gordon operator $\partial_t^2 u-\Delta_g u+mu+a(x)\partial_t u$ with $m>0$.  In that case, we could use \cite[Proposition B.4.]{JL:20}, which, in some cases, allows us to transfer resolvent estimates for the damped wave operator to a bounded perturbation.
\end{remark}
 Now, we deal with nonlinear equations.  There are several results on the existence of periodic solutions for damped nonlinear wave type equations, but, to our knowledge, they all concern cases with strong exponential decay, often with constant damping.  We refer to the book of Haraux~\cite[Chapter 8]{H:Book18} for a recent presentation of these results.  Now, we present some nonlinear results for a small forcing term.
\begin{theorem}
\label{t:wavenonlin}
Let $g$ be a polynomial function $\R\longrightarrow \R$ with $g(0)=g'(0)=0$. In the case \eqref{i:convex} of a domain with convex obstacles in dimension $2$.  For every $0<\sigma<1/2$, there exists  $\e>0$ so that  for every $f\in L^{1}_{per}([0,T],H^{\sigma}(\M))$ with $\nor{f}{L^{1}_{per}([0,T],H^{\sigma}(\M))} \leq \e$, there exists  $(u_0,u_1)\in H^1_0\times L^2(\M)$ so that the unique solution $u\in C^1(\R_+,L^2(\M))\cap C^0(\R_+,H^1_0(\M))$ to 
\bneq
\partial_t^2 u-\Delta_g u+a(x)\partial_t u&=&f+g(u)\quad \textnormal{ on }\R_+\times \M,\\
u&=&0\quad \textnormal{ on } \R_+\times \d\M,\\
(u,\partial_t u)(0)&=&(u_0,u_1)\quad \textnormal{ on } \M,
\eneq
is $T$-periodic.
\end{theorem}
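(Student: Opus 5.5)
The plan is to apply the abstract nonlinear result, Theorem~\ref{t:nonlin}, on $\X=H^1_0\times L^2(\M)$ with the damped wave generator $\A$. In case~\eqref{i:convex} we have $\d\M\neq\emptyset$, so $|\cdot|_\X$ is a norm and there is no kernel to quotient out. The nonlinearity will be $F(U)=(0,g(u))$ for $U=(u,\partial_t u)$, and the admissible space will be
\[
W=L^1_{per}([0,T],\{0\}\times H^{\sigma}(\M)).
\]
The structure is therefore: first show that this $W$ is admissible for a space $E_2$ on which the decay is integrable, then check the Lipschitz bound \eqref{boundF} for $F$ from $C([0,T],\X)$ into $W$.

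\textbf{Step 1 (linear part).} The decay $h(t)=Ce^{-ct^{1/3}}$ from Joly--Laurent gives $\beta_0=+\infty$. Lemma~\ref{claimalpha} then yields, for every $\alpha>0$,
\[
\nor{e^{\A t}U_0}{\X}\leq C_\alpha h(t/\lceil\alpha\rceil)^{\alpha}\nor{U_0}{\D((-\A)^{\alpha})},
\]
which is integrable in $t$. I will take $E_2=\D((-\A)^{\alpha})$ with $\alpha=\sigma/2$. By Lemma~\ref{lm:L1E2admiss} (using the commutation argument from the proof of Theorem~\ref{t:interp}), $L^1_{per}([0,T],E_2)$ is admissible for $E_2$. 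It then suffices to show $\{0\}\times H^{\sigma}\subset \D((-\A)^{\sigma/2})$ continuously for $0<\sigma<1/2$; I expect the argument to need only $0<\sigma<1$ here.

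This inclusion is the main obstacle, because $\A$ is not normal and fractional domains must be identified by interpolation. My approach is as follows. $\D(\A)=(H^2\cap H^1_0)\times H^1_0$ contains $\{0\}\times H^1_0$, and by Lemma~\ref{lm:inter} (Section~\ref{s:powerinterp}) complex interpolation gives $[\X,\D(\A)]_{\theta}\subset\D((-\A)^{\theta'})$ for every $\theta'<\theta$. The $\varepsilon$-loss is harmless, since $\beta_0=\infty$ means any positive power gives an integrable rate. The second component then interpolates between $L^2$ and $H^1_0$, and $[L^2,H^1_0]_\theta=H^\theta$ for $\theta<1/2$, with no boundary condition. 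This is where the restriction $\sigma<1/2$ enters. If needed, I would use the alternative that $\{0\}\times L^2\to\X$ is trivial and handle only the second component through the interpolation couple $(\{0\}\times L^2,\{0\}\times H^1_0)$.

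\textbf{Step 2 (nonlinear estimate).} In dimension $2$, $H^1_0\subset L^p$ for every $p<\infty$. For $u,v$ in the unit ball of $H^1_0$, and since $g$ is a polynomial with $g(0)=g'(0)=0$,
\[
|g(u)-g(v)|\lesssim (|u|+|v|)(1+|u|+|v|)^{N}|u-v|.
\]
I will bound $g(u)-g(v)$ in $H^{\sigma}$ by bounding it in $H^1$ and using $H^1\subset H^\sigma$ for $\sigma<1$. In $W^{1,q}$ with $q<2$ close to $2$ this is straightforward by Hölder and the embeddings $H^1\subset L^p$, and Sobolev in 2D gives $W^{1,q}\subset H^{\sigma}$ when $1-2/q\geq\sigma-1$, which certainly holds. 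The gradient of $g(u)-g(v)$ is
\[
g'(u)\nabla(u-v)+(g'(u)-g'(v))\nabla v,
\]
so its $L^q$ norm is at most $C(\nor{u}{H^1}+\nor{v}{H^1})\nor{u-v}{H^1}$. It is important here that $g'(0)=0$, so $g'(u)$ is small in every $L^p$. The Dirichlet trace of $g(u)-g(v)$ vanishes, although for $\sigma<1/2$ this is not even needed.

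Integrating in time over $[0,T]$ gives \eqref{boundF} from $C([0,T],\X)$ into $W$. Local Lipschitz continuity follows from the same computation, and Theorem~\ref{t:nonlin} then produces the periodic solution for $\nor{f}{L^1H^\sigma}\leq\e$.
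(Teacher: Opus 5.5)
Your proof is correct and follows essentially the paper's argument. You apply Theorem~\ref{t:nonlin} with $F(U)=(0,g(u))$, embed the $L^1_tH^\sigma$ forcing into a fractional domain of $\A$ on which the decay is integrable, and get the quadratic bound through $W^{1,q}\hookrightarrow H^\sigma$ with $q<2$, as in Lemma~\ref{lm-compo}. Your one-sided inclusion $\{0\}\times H^\sigma\subset\D((-\A)^{\sigma/2})$, with the $\varepsilon$-loss absorbed by $\beta_0=\infty$, is a more careful substitute for the identification $\D((-\A)^{\sigma})=H^{1+\sigma}_0\times H^\sigma$ that the paper states without proof. However, it follows from the sandwich \eqref{sandwichinterp} together with $[\X,\D(\A)]_\theta\subset(\X,\D(\A))_{\theta,\infty}\subset(\X,\D(\A))_{\theta',1}$ for $\theta'<\theta$, not from Lemma~\ref{lm:inter}, which bounds operators and gives no inclusions. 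Also drop the sentence about bounding $g(u)-g(v)$ in $H^1$. In dimension $2$, $g'(u)\nabla u$ is in general only in $L^q$ for $q<2$, so $g(u)\notin H^1$, and it is your $W^{1,q}$ computation that does the job.
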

\bnp
We will apply Theorem~\ref{t:nonlin} with the same $\X$ and $\A$ defined in the proof of Theorem~\ref{t:dampedwave}.  In particular, $\D((-\A)^{\sigma})=H^{1+\sigma}_0\times H^{\sigma}(\M)$ for $0<\sigma<1/2$. We pick $W=L^1([0,T],\D((-A)^{\sigma})$. For $U=(u,\dot{u})\in C([0,T],\X)$, we define $F(U)=(0,g(u))$ and prove that $F$ applies $C([0,T],\X)$ to $W$ and satisfies \eqref{boundF}. Using Lemma~\ref{lm-compo} below, we can bound 
\begin{align*}
   \nor{F(U)-F(V)}{W}&=\nor{g(u)-g(v)}{L^1([0,T],H^{\sigma}(\M))} \\
 &\leq C\left(\nor{u}{C([0,T],H^1(\M))}+\nor{v}{C([0,T],H^1(\M))} \right)\nor{u-v}{C([0,T],H^1(\M))}\\
&\leq C\left(\nor{U}{C([0,T],\X)}+\nor{V}{C([0,T],\X)} \right)\nor{U-V}{C([0,T],\X)}.
\end{align*}

\enp

We used the following result of composition, which is a slight variant of \cite[Proposition 3.2]{JL:20}.
\begin{lemma}\label{lm-compo}
Assume that dim$(\M)=2$. Then for any $\sigma\in [0,1)$, the function $g$ as above maps any bounded set of $H^1_0(\M)$ in a bounded of $H^\sigma_0(\M)$ and we have the estimates 
\begin{align*}
\nor{g(u)-g(v)}{H^{\sigma}_D(\M)}&\leq C\left(\nor{u}{H^1(\M)}+\nor{v}{H^1(\M)} \right)\nor{u-v}{H^1(\M)}.
\end{align*}
\end{lemma}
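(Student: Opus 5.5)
We argue in the spirit of \cite[Proposition 3.2]{JL:20}, combining the algebraic structure of $g$ with the two-dimensional Sobolev embeddings. Since $g$ is a polynomial with $g(0)=g'(0)=0$, we write $g(u)=\sum_{j=2}^{N}c_j u^j$. Then
\begin{equation*}
g(u)-g(v)=(u-v)\,G(u,v),\qquad G(u,v)=\sum_{j=2}^N c_j\sum_{l=0}^{j-1}u^l v^{j-1-l}.
\end{equation*}
Every monomial in $G$ has degree at least $1$. The boundedness statement follows from the difference estimate by taking $v=0$, so the whole proof reduces to a product estimate.

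The key step is the following claim, which we aim to prove for all $w,\phi\in H^1(\M)$:
\begin{equation*}
\nor{w\,\phi}{H^{\sigma}(\M)}\leq C\nor{w}{H^1(\M)}\nor{\phi}{W^{1,q}(\M)\cap L^\infty}.
\end{equation*}
An alternative form of the same claim is a direct bound of $\nor{w\phi}{H^\sigma}$ by norms in which $\phi$ may be a product of $H^1$ functions.

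The cleanest route is to prove the endpoint $\sigma<1$ through an $L^p$ gradient bound rather than fractional Leibniz rules. In dimension $2$, we have $H^1\subset L^r$ for every $r<\infty$. The product $P=(u-v)u^l v^{m}$ therefore lies in $L^2$, and its gradient is a sum of terms of the form $\nabla w_1\cdot w_2\cdots w_j$ with each $w_i\in H^1$. By H\"older, $\nabla P\in L^{p}$ for every $p<2$, with bound $\prod_i\nor{w_i}{H^1}$. Hence $P\in W^{1,p}$ for all $p<2$, and the embedding $W^{1,p}(\M)\subset H^{\sigma}(\M)$ holds as soon as $1-2/p\geq \sigma-1$, that is $\sigma\le 2-2/p$. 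Taking $p$ close to $2$ covers every $\sigma<1$. Multilinearity then gives
\begin{equation*}
\nor{g(u)-g(v)}{H^\sigma}\leq C\nor{u-v}{H^1}\sum_{j}(\nor{u}{H^1}+\nor{v}{H^1})^{j-1}.
\end{equation*}
On bounded sets this yields the stated inequality, since every factor other than $u-v$ contributes at least one power of $\nor{u}{H^1}+\nor{v}{H^1}$, and the higher powers are bounded by the radius of the set.

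The main obstacle is the boundary and the space $H^\sigma_D$, i.e.\ $H^\sigma_0$, the domain-type space matching $\D((-\A)^{\sigma})$. Because $u,v\in H^1_0$ vanish on $\d\M$, the product $P$ lies in $W^{1,p}_0$. We would extend by zero to a larger domain, or use the fact that for $\sigma<1/2$ (the range actually needed in Theorem~\ref{t:wavenonlin}) the spaces $H^\sigma_0$ and $H^\sigma$ coincide. For $1/2\le\sigma<1$ we would instead interpolate between $L^2$ and $H^1_0$ via $W^{1,p}_0\subset [L^2,H^1_0]_{\sigma}$, which again follows from zero-extension and the whole-space embedding. The uniform H\"older bounds and the restriction to bounded sets are then routine.
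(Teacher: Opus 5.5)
Your proposal is correct and is essentially the paper's proof. You factor $u^p-v^p=(u-v)\sum_i u^iv^{p-1-i}$, use H\"older with $H^1(\M)\subset L^r(\M)$ for all $r<\infty$ to put the product in $W^{1,q}$ for some $q<2$, and conclude with the critical embedding $W^{1,q}\subset H^\sigma$, $q=\frac{2}{2-\sigma}$. The only difference is at the boundary: the paper takes $\sigma\in(1/2,1)$ and notes that $g(u)$ vanishes on $\partial\M$, and your $W^{1,p}_0$/zero-extension argument is a slightly more careful version of the same point.

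Your remark that the constant must depend on the bounded set, because of the higher powers $\|u\|_{H^1}^{p-1}$ when $p\ge 3$, is right and is implicit in the paper's last line. Your preliminary $L^\infty$-type claim cannot be used in dimension two, since products of $H^1$ functions are not bounded, but you correctly set it aside and never rely on it.
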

\bnp
We can assume without loss of generality that $\sigma\in (1/2,1)$, so that $H^{\sigma}_D(\M)=H^{\sigma}_0$. Therefore, since  $u=0$ on $\d\M$ implies  $g(u)=0$ on $\d\M$ it is enough to prove the bound of the norm $H^{\sigma}(\M)$. 
Since $f$ is polynomial function with $g(0)=g'(0)=0$, it is a finite sum of monomial $x^p$ with $p\in \N$, $p\geq 2$. So, it is enough to consider $g=u^p$ for such $p$. Moreover,  thanks to the Sobolev embedding $W^{1,q}(\Omega) \hookrightarrow H^{\sigma}(\M)$ for  $q=\frac{2}{2-\sigma}\in [1,2)$, we get
\begin{align*}
\nor{u^p-v^p}{H^{\sigma}_D(\M)}\leq&  \nor{u^p-v^p}{W^{1,q}(\M) }\leq  \nor{(u-v)\sum_{i=0}^{p-1} u^iv^{p-1-i}}{W^{1,q}(\M)}\\
 \lesssim &  \nor{\nabla (u-v)}{L^2}\left(\nor{u}{L^{r}}^{p-1}+\nor{u}{L^{r}}^{p-1}\right) \\
 &+   \nor{u-v}{L^r}\left(\nor{\nabla u}{L^{2}}+\nor{\nabla v}{L^{2}}\right)\left(\nor{u}{L^{r}}^{p-2}+\nor{u}{L^{r}(\M)}^{p-2}\right) \\
 &+\nor{u-v}{L^2}\left(\nor{u}{L^{r}}^{p-1}+\nor{u}{L^{r}(\M)}^{p-1}\right)\\
\lesssim  &\left(\nor{u}{H^1(\M)}+\nor{v}{H^1(\M)} \right)\nor{u-v}{H^1(\M)}
\end{align*}
with $r=(p-1)\frac {2q}{2-q}\in [1,+\infty)$ defined as soon as $1\leq q<2$ is chosen so that $\frac{1}{q}=\frac{1}{2}+\frac{p-1}{r}$. We have used the Sobolev embedding $H^{1}(\M) \hookrightarrow L^r(\M)$ in the last line. This finishing the proof of the Lemma.
\enp
\subsection{Damped waves with boundary source term}
\label{s:examplewaveboundary}
The first example we give is a case where the damping is strong, but there is a boundary forcing term.  It seems the result is new in this case.
\begin{assump}{GCC}\label{assumGCC}
\textit{We say that an open subset $\omega\subset \Omega$ satisfies the Geometric Control Condition if there exists $T_{\omega}>0$ so that every generalized geodesic of $\Omega$ traveling at speed 1 meets $\omega$ in time $t\in (0, T)$.}
\footnote{The ray of Geometric Optics are defined in Melrose-Sjostrand~\cite{MS:78}.  We will always assume that the Hamiltonian vector field of the wave operator does not have
contact of infinite order with $\d \M$.  Which ensures that the broken bicharacteristic flow is uniquely defined.}

 \textit{We say that an open subset $\gamma\subset \d\Omega$ satisfies the Geometric Control Condition if there exists $T_{\gamma}>0$ so that every generalized geodesic of $\Omega$ traveling at speed 1 meets $\gamma$ at least one time at a point not strictly diffractive in time $t\in (0, T)$.}
\end{assump}
\begin{theorem}
\label{t:periodboundary}
    Let $\Omega$ be a bounded domain of $\R^d$ with smooth boundary.  Let $a\in C^1(\M,\R_+)$ so that $a(x)\geq \eta>0$ on an open subset $\omega\subset \Omega$ satisfying the Geometric Control Condition \eqref{assumGCC}. Then for any $T>0$ and $f\in L^2_{per}([0,T],L^2(\d \Omega))$, there exists a unique $(u_0,u_1)\in L^2\times H^{-1}(\Omega)$ so that the unique solution $u\in C^1(\R_+,H^{-1}(\Omega))\cap C^0(\R_+,L^2(\Omega))$ to 
\bneqn
\label{dampedwaveboundary}
\partial_t^2 u-\Delta u+a(x)\partial_t u&=&0\quad \textnormal{ on }\R_+\times \Omega,\\
u&=&f\quad \textnormal{ on } \R_+\times \d\Omega,\\
(u,\partial_t u)(0)&=&(u_0,u_1)\quad \textnormal{ on } \Omega,
\eneqn
is $T$-periodic. 
\end{theorem}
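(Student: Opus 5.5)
The strategy is to apply Theorem~\ref{t:variantop} with $E_2=\X$, in the state space $\X=L^2(\Omega)\times H^{-1}(\Omega)$. Let $\A=\begin{pmatrix}0&\id\\ \Delta_D&-a\end{pmatrix}$ with $\Delta_D$ the Dirichlet Laplacian, extended as the generator on $\X$ with domain $\D(\A)=H^1_0\times L^2(\Omega)$. This is the damped wave semigroup one level below the energy space, and it is unitarily equivalent to the energy-space one via $\operatorname{diag}((-\Delta_D)^{-1/2},(-\Delta_D)^{-1/2})$ up to commutation with $a$.

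\textbf{Step 1 (exponential decay on $\X$).} Under \eqref{assumGCC}, the Bardos--Lebeau--Rauch argument gives exponential decay $\nor{e^{t\A}}{\mathcal{L}(H^1_0\times L^2)}\leq Ce^{-ct}$. I would transfer this to $\X$, either by duality or by noting $i\R\subset\rho(\A)$ together with a uniform resolvent bound, which is invariant under the change of levels: the commutator $[a,(-\Delta_D)^{-1/2}]$ is compact, and $a\in C^1$ is enough to control it. Gearhart--Pr\"uss then yields $\nor{e^{t\A}}{\mathcal{L}(\X)}\leq Ce^{-ct}$. Hence the decay hypothesis of Theorem~\ref{t:variantop} holds with $E_2=\X$ and $h(t)=Ce^{-ct}\in L^1$, and uniqueness follows from Proposition~\ref{p:convuniq}.

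\textbf{Step 2 (boundary forcing as an admissible control operator).} Write the Dirichlet data through the Dirichlet lifting $D$, where $Dg$ is the harmonic extension of $g$. Then the system reads $\partial_t U=\A U+Bf$ with $B\in\mathcal{L}(L^2(\d\Omega),\X_{-1})$, $Bf=(0,-\Delta_D Df)$ in the extrapolated sense, i.e. $B^*$ is essentially $\d_\nu$ of the first component of the adjoint state. The admissibility of $B$ for $(\X,L^2(\d\Omega))$ is precisely the classical hidden regularity result of Lasiecka--Lions--Triggiani: for $f\in L^2((0,\tau)\times\d\Omega)$ the solution lies in $C([0,\tau],L^2)\cap C^1([0,\tau],H^{-1})$ with continuous dependence. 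Equivalently, by duality, $\d_\nu\phi\in L^2((0,\tau)\times\d\Omega)$ for finite-energy solutions $\phi$ of the adjoint damped wave equation. The zero-order term $a\partial_t$ is a bounded perturbation, so admissibility for the undamped wave passes to the damped one by a Duhamel perturbation argument. This verifies Definition~\ref{def:admiopforE2} with $E_2=\X$.

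\textbf{Step 3 (conclusion).} Apply Theorem~\ref{t:variantop}. This gives a unique $w_0=(u_0,u_1)\in\X$ producing a $T$-periodic mild solution $U=(u,\partial_tu)\in C^0(\R_+,\X)$, which gives exactly the regularity stated.

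\textbf{Main obstacle.} I expect the main difficulty to lie in the rigorous identification of the boundary problem \eqref{dampedwaveboundary} with the abstract mild solution and in the hidden regularity estimate, which requires the multiplier or microlocal argument. I would quote it from the literature (Lasiecka--Lions--Triggiani) rather than reprove it. A secondary delicate point is transferring the decay to the level $L^2\times H^{-1}$; here I would rely on the resolvent characterization to avoid regularity issues with $a$.
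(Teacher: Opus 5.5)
Your proposal is correct and follows essentially the paper's own route: the paper also works in $L^2\times H^{-1}(\Omega)$, identified with $\X_{-1}$ for the energy space $\X=H^1_0\times L^2(\Omega)$. It transfers the Bardos--Lebeau--Rauch exponential decay to that level, gets admissibility of the Dirichlet control operator from hidden regularity of the dual damped wave equation, and concludes with Theorem~\ref{t:variantop}. The decay transfer is simpler than you fear: $\nor{e^{t\A}x}{\X_{-1}}=\nor{e^{t\A}\A^{-1}x}{\X}\leq Ce^{-\lambda t}\nor{x}{\X_{-1}}$ because the semigroup commutes with $\A^{-1}$, so the Gearhart--Pr\"uss argument and the commutator with $(-\Delta_D)^{-1/2}$ are unnecessary.
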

\bnp
Note that the notion of solution here is by transposition.  We denote $\X=H^1_0\times L^2(\Omega)$, and $\A=\begin{pmatrix}
0 & \id\\
\Delta & -a
\end{pmatrix}$ with domain $\D(\A)=\left(H^2\cap H^1_0(\M)\right) \times H^1_0(\M)$, the generator of the damped wave equation with Dirichlet boundary conditions. Denoting $\Delta^{-1}$ the inverse of the Dirichlet Laplacian that is well defined from $L^2$ to $H^2\cap H_0^1$ as well as from $H^{-1}$ to $H^1_0$. We compute $\A^{-1}=\begin{pmatrix}
\Delta^{-1}\circ  a &\Delta^{-1}\\
\id &0  
\end{pmatrix}$ from   $\X$ to $\D(\A)$, that can be extended from $\X_{-1}$ to $\X$ (see \cite[Proposition 2.10.3.]{TucsnakWeiss}). 
\begin{align*}
    \nor{(f_0,f_1)}{\X_{-1}}&=\nor{A^{-1}(f_0,f_1)}{\X}=\nor{\Delta^{-1}(a(x)f_0+f_1)}{H_0^1}+\nor{f_0}{L^2(\Omega)}\\
    &=\nor{f_1+a(x)f_0}{H^{-1}(\Omega)}+\nor{f_0}{L^2(\Omega)}\leq \nor{f_1}{H^{-1}(\Omega)}+(C+1)\nor{f_0}{L^2(\Omega)}.
\end{align*}
where $C>0$ is one constant so that $\nor{a(x)f_0}{H^{-1}(\Omega)}\leq C \nor{f_0}{L^2(\Omega)}$. And reciprocally,
\begin{align*}
(1+C)\nor{(f_0,f_1)}{\X_{-1}}&\geq \nor{f_1+a(x)f_0}{H^{-1}(\Omega)}+(1+C)\nor{f_0}{L^2(\Omega)}\geq \nor{(f_0,f_1)}{L^2\times H^{-1}(\Omega)}.
\end{align*}
In particular, the norms of $\X_{-1}$ and $L^2\times H^{-1}(\Omega)$ are equivalent and $\X_{-1}=L^2\times H^{-1}(\Omega)$ since both spaces are the completion of  $\X$ for their respective norms.

 By the work of Bardos-Lebeau-Rauch~\cite{BLR}  (see also \cite{L:96}) the semigroup $\A$ satisfies $\nor{e^{t\A}x_0}{\X}\leq Ce^{-\lambda t}\nor{x_0}{\X}$ for some $C$, $\lambda>0$. In particular, $\nor{e^{t\A}\A^{-1}x_0}{\X}\leq Ce^{-\lambda t}\nor{\A^{-1}x_0}{\X}$, which is $\nor{e^{t\A}x_0}{\X^{-1}}\leq Ce^{-\lambda t}\nor{x_0}{\X^{-1}}$ for any $x_0\in \X$. This extends by density to $\X^{-1}$ after extending the semigroup to $\X^{-1}$ (see for instance \cite[Proposition 2.10.2.]{TucsnakWeiss}).

We want to apply Theorem~\ref{t:variantop} on $\X_{-1}=L^2\times H^{-1}(\Omega)$ and $U=L^2(\d\Omega)$. It only remains to describe how we can rewrite the boundary value problem as an admissible operator $B$.  This is done precisely in \cite[Chapter 10.9]{TucsnakWeiss} for the free wave operator.  Similar arguments work for the damped wave operator.  By duality, the main point is to prove some hidden regularity 
\begin{align*}
    \nor{\frac{d v}{d\nu}}{L^2([0,t],L^2(\d \Omega))}\leq C \nor{(v(0),\partial_t v(0))}{H_0^1\times L^2(\Omega)},
\end{align*}
for some solution of the dual equation $\partial_t^2 v-\Delta v-a(x)\partial_t v=0$ with Dirichlet boundary condition. The proof is the same as in the free case \cite[Theorem 7.1.3.]{TucsnakWeiss}.
\enp
\begin{remark}
    It could be surprising at first that there is no link between $T$ and $T_{\omega}$, the time of geometric condition. That means that it does not directly come as a consequence of observability estimates that require a minimal time.
\end{remark}
Note that we could also think about the reverse process.  If we are targeting a specific function to return periodically, can we find a source term that provides the solution? We refer to  Pomet-Coron-Praly~\cite{CPP:90} for questions relating control and periodic solutions. In this context, it turns out to be a reformulation of a control problem and comes as a direct consequence of the result of Bardos-Lebeau-Rauch~\cite{BLR}.
\begin{theorem}
Under the same assumptions as Theorem~\ref{t:periodboundary}.  Let $\gamma \subset \d\Omega$ satisfying the Geometric Control Condition~\eqref{assumGCC} for a time $T_{\gamma}$. Let $T>T_{\gamma}$. Then, for any $(u_0,u_1)\in L^2\times H^{-1}(\Omega)$, there exists $f\in L^2_{per}([0,T],L^2(\d \Omega))$ supported in $\gamma$ so that the solution of \eqref{dampedwaveboundary} is $T$-periodic.
\end{theorem}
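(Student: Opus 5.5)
The statement will be reduced to exact boundary controllability of the damped wave equation on $[0,T]$. The key observation is that a solution of \eqref{dampedwaveboundary} is $T$-periodic if and only if $(u,\partial_t u)(T)=(u_0,u_1)$. Indeed, if this holds, we define $f$ on all of $\R_+$ by $T$-periodic extension of its restriction to $[0,T]$. By uniqueness of transposition solutions and time-invariance of the equation, the solution on $[T,2T]$ is then the translate of the solution on $[0,T]$, and by induction $u$ is $T$-periodic. So it suffices to find $f\in L^2([0,T],L^2(\d\Omega))$, supported in $\gamma$, steering $(u_0,u_1)$ to itself at time $T$. Any such $f$, extended periodically, lies in $L^2_{per}([0,T],L^2(\d\Omega))$; no matching condition at $t=0,T$ is needed in $L^2$.

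By linearity, I will write $u=u^{free}+v$. Here $u^{free}$ solves the damped equation with zero boundary data and initial data $(u_0,u_1)$, so that $(u^{free},\partial_tu^{free})(T)=e^{T\A}(u_0,u_1)$, using the semigroup extended to $\X_{-1}=L^2\times H^{-1}(\Omega)$ as in the proof of Theorem~\ref{t:periodboundary}. The function $v$ has zero initial data and boundary data $f$, so $(v,\partial_tv)(T)=\Phi_T f$ with $B$ the Dirichlet control operator described there. We therefore need $\Phi_T f=(\id-e^{T\A})(u_0,u_1)\in L^2\times H^{-1}(\Omega)$. This is exactly a null-initial-data reachability problem, which follows from surjectivity of $\Phi_T$ onto $\X_{-1}$ with controls supported in $\gamma$.

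Surjectivity is the main step. I will get it from the Bardos--Lebeau--Rauch result~\cite{BLR}, which applies under GCC for $\gamma$ and $T>T_\gamma$. By the Hilbert uniqueness method, surjectivity of $\Phi_T$ from $L^2([0,T],L^2(\gamma))$ onto $\X_{-1}$ is equivalent to the observability inequality
\begin{align*}
\nor{(w(T),\partial_tw(T))}{H^1_0\times L^2(\Omega)}\leq C\nor{\partial_\nu w}{L^2([0,T]\times\gamma)}
\end{align*}
for solutions of the adjoint equation $\partial_t^2w-\Delta w-a\partial_tw=0$ with Dirichlet conditions. The adjoint problem is posed backward in time, and its time reversal is again a damped wave equation.

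The expected obstacle is the presence of the damping term. BLR is stated for the free wave equation, or more generally for lower-order perturbations. I would handle the damping in one of two ways. One option is to invoke the microlocal proof, which propagates along the same broken bicharacteristics because $a\partial_t$ is of lower order. The other is the standard compactness--uniqueness argument: first prove observability modulo a compact term by comparison with the free wave, which is admissible since $a\in C^1$; then remove the compact term using unique continuation for the damped wave (Holmgren does not apply directly, but for time-independent $a$ this reduces to the eigenfunction problem for $-\Delta w+\lambda^2w+\lambda a w$ with $\partial_\nu w=0$ on $\gamma$, which is settled by elliptic unique continuation). With observability established, surjectivity follows by duality (closed range plus injectivity of the adjoint), and choosing $f$ as the HUM control completes the proof.
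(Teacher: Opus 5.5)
Your proposal is correct and is essentially the paper's proof. The paper also reduces $T$-periodicity to finding $\widetilde f\in L^2([0,T],L^2(\partial\Omega))$ supported in $\gamma$ with $(u,\partial_t u)(T)=(u_0,u_1)$, obtains it from the Bardos--Lebeau--Rauch exact controllability result, periodizes $\widetilde f$, and concludes by uniqueness of the flow; your splitting $\Phi_T f=(\id-e^{T\A})(u_0,u_1)$ and the HUM discussion only make explicit what the paper takes directly from that reference. One caveat: of your two ways of handling the damping, only the microlocal (defect-measure) one works as stated, because comparing with the free wave does not give a compact remainder ($(w_0,w_1)\mapsto(0,-aw_1)$ is not compact on $H^1_0\times L^2(\Omega)$, and along a ray crossing $\{a>0\}$ before hitting $\gamma$ the damping changes the boundary trace of high-frequency packets by a factor of order one), whereas in the defect-measure argument $a\partial_t$ only contributes a multiplicative factor along the broken bicharacteristics.
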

\bnp
We only need to find $\widetilde{f}\in L^2([0,T],L^2(\d \Omega))$ so that $(u,\partial_t u)(T)=(u_0,u_1)$ which is exactly the result of the control problem of \cite{BLR}. Defining $f\in L^2_{per}([0,T],L^2(\d \Omega))$ by periodization of $\widetilde{f}$. The uniqueness of the flow and the periodicity of $f$ provide the periodicity of $u$.
\enp
\subsection{Coupled parabolic/hyperbolic systems}
\label{s:exampleheatwave}
We follow the formulation of \cite{ZZ:07}. Let $\Omega\subset \R^n$ ($n \in\N$) be a bounded domain with $C^2$ boundary $\Gamma=\d\Omega$. Let $\Omega_H=\Omega_1$ be a subdomain of $\Omega$ and $\Omega_W=\Omega_2=\Omega\setminus \overline{\Omega_H}$ both connected. We denote by $\gamma$ the interface, $\Gamma_{j}=\Omega_j\setminus \overline{\gamma}$ and $\nu_j$ the unit outward normal vector to $\Omega_j$. We have $\Gamma=\overline{\Gamma_H\cup \Gamma_W}$. We assume that $\gamma$ is a non-empty open subset of $\d \Omega_j$ with positive capacity (see \cite{ZiemerBook} for the definition of capacity) and of class $C^1$ (unless otherwise stated).  We assume also $\operatorname{Cap}(\Gamma_W) > 0$ and $\operatorname{Cap}(\Gamma_H) > 0$.
\begin{assumption}
\label{a:obserwave}
   There exist two constants $T_0 > 0$ and $C > 0$ such that, for any $T> T_0$, all
solutions of the system
\bneq
\partial_t^2 y-\Delta_g y&=&0\quad \textnormal{ on }[0,T]\times \Omega\\
y&=&0 \quad \textnormal{ on }[0,T]\times \d \Omega\\
(y,\partial_t y)(0)&=&(y_0,y_1)\textnormal{ on } \Omega.
\eneq
satisfy 
\begin{equation*}
    |y_0|_{H^1_0(\Omega)}^2+|y_1|_{L^2(\Omega)}^2\leq C \int_0^T \int_{\Omega_H} |\partial_t y|^2 dx dt, \quad \forall (y_0,y_1)\in H^1_0(\Omega) \times L^2_0(\Omega).
\end{equation*}
\end{assumption}
When $\Omega$ is smooth enough, the assumption is satisfied if $\Omega_H\subset \Omega$ satisfies the Geometric Control Condition~\eqref{assumGCC} in $\Omega$ of Bardos-Lebeau-Rauch~\cite{BLR}. 

We are concerned with the following free system.
\bneqn
\label{waveheat}
\partial_t u-\Delta_g u=&0\quad &\textnormal{ on }[0,T]\times \Omega_H\\
\partial_t^2 w-\Delta_g w=&0\quad &\textnormal{ on }[0,T]\times \Omega_W\\
\partial_t w=&u\quad &\textnormal{ on }[0,T]\times \gamma\\
\partial_{\nu_1} u=-&\partial_{\nu_2}  w \quad &\textnormal{ on }[0,T]\times \gamma\\
w=&0 \quad &\textnormal{ on }[0,T]\times \Gamma_{W}\\
u=&0 \quad &\textnormal{ on }[0,T]\times \Gamma_H\\
u(0)=&u_0\quad &\textnormal{ on } \Omega_H\\
(w,\partial_t w)(0)=&(w_0,w_1)&\textnormal{ on } \Omega_W.
\eneqn
We define the restriction spaces
\begin{equation*}
H^1_{\Gamma_H}(\Omega_H)=\left\{h|_{\Omega_H}\left|h\in H^1_0(\Omega)\right.\right\},\quad H^1_{\Gamma_W}(\Omega_W)=\left\{h|_{\Omega_W}\left|h\in H^1_0(\Omega)\right.\right\}.
\end{equation*}
and the phase space
\begin{equation*}
    H=L^2(\Omega_H)\times H^1_{\Gamma_H}(\Omega_W)\times L^2(\Omega_W),
\end{equation*}
together with the norm
\begin{equation*}
    \nor{(u,w_0,w_1)}{H}^2= \nor{u}{L^2(\Omega_H)}^2+\nor{w_0}{L^2(\Omega_W)}^2+\nor{\nabla w_0}{L^2(\Omega_W)}^2+\nor{w_1}{L^2(\Omega_W)}^2,
\end{equation*}
when $\Gamma_W$ is a non empty open set of the boundary, the following semi-norm is actually a norm 
\begin{equation*}
    \left|(u,w_0,w_1)\right|_H^2= \nor{u}{L^2(\Omega_H)}^2+\nor{\nabla w_0}{L^2(\Omega_W)}^2+\nor{w_1}{L^2(\Omega_W)}^2.
\end{equation*}
The infinitesimal generator of the equation will be $A: \D(A)\subset H \longrightarrow H$ by 
$$ A Y=(\Delta u,w_1,\Delta w_0),$$
where $Y=(u,w_0,w_1)$, and 
\begin{align*}
    \D(A)=\left\{(u,w_0,w_1)\in H\right.&\left|  \Delta u\in L^2(\Omega_H), \Delta w_0\in L^2(\Omega_W), w_1\in H^1(\Omega_W),\right.\\
   & u|_{\Gamma_H}=0, \quad w_1|_{\Gamma_W}=0,\\
 &\left.  u|_{\gamma}=w_1|_{\gamma}, \quad \partial_{\nu} u|_{\gamma}=\partial_{\nu} w_0|_{\gamma}\right\},
\end{align*}

We will assume $\operatorname{Cap}(\Gamma_W) > 0$ so that the operator $A$ generates a contractive $C^0$ semigroup in $H$ with $0\in\rho(A)$, see \cite[Theorem 1.]{ZZ:07}. Zhang-Zuazua proved the following decay 
\begin{theorem}\cite[Theorem 11.]{ZZ:07}
\label{t:XuZuaz}
Assume $\Omega_W$ satisfies Assumption~\ref{a:obserwave}. Then, there is a constant $C>0$ such that for any $(u_0, w_0,w_1) \in \D(A)$ the solution of \eqref{waveheat} satisfies
\begin{equation*}
  \nor{( u(t),w(t), \partial_t w(t))}{H}\leq \frac{C}{t^{1/6}}\nor{(u_0,w_0,w_1)}{\D(A)},\quad \forall t >0 .
\end{equation*}
\end{theorem}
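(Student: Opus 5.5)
The estimate will follow from a polynomial resolvent bound on the imaginary axis combined with the Borichev--Tomilov theorem~\cite{BT:10}. The semigroup generated by $A$ is a contraction semigroup on the Hilbert space $H$, and $0\in\rho(A)$. In that setting, $\nor{R(i\eta,A)}{\mathcal{L}(H)}=O(|\eta|^{\alpha})$ as $|\eta|\to\infty$, together with $i\R\subset\rho(A)$, is equivalent to $\nor{e^{tA}A^{-1}}{\mathcal{L}(H)}=O(t^{-1/\alpha})$. So it is enough to prove
\begin{equation*}
\nor{Y}{H}\leq C(1+|\eta|)^{6}\nor{F}{H},\qquad \text{whenever } (i\eta-A)Y=F,\ Y\in\D(A),
\end{equation*}
and to check that $i\R\cap\sigma(A)=\emptyset$. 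The second point reduces to a unique continuation argument: $\operatorname{Re}\langle AY,Y\rangle=0$ forces $\nabla u=0$, hence $u=0$ and vanishing Cauchy data for $w_0$ on $\gamma$.

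Write $Y=(u,w_0,w_1)$ and $F=(f,g_0,g_1)$. First I use the dissipation identity $\operatorname{Re}\langle AY,Y\rangle_H=-\nor{\nabla u}{L^2(\Omega_H)}^2$, which gives $\nor{\nabla u}{L^2(\Omega_H)}^2\leq \nor{F}{H}\nor{Y}{H}$. Poincaré's inequality (using $u|_{\Gamma_H}=0$) then controls $\nor{u}{H^1(\Omega_H)}$ by the same quantity. Next, the heat equation $i\eta u-\Delta u=f$ gives $\nor{\Delta u}{L^2}\lesssim|\eta|\nor{u}{L^2}+\nor{f}{L^2}$. Interpolating between $H^1$ and $H^2$ and taking traces, I bound $u|_\gamma$ and $\partial_{\nu}u|_\gamma$ in suitable trace norms. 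Each bound is a power of $|\eta|$ times $(\nor{F}{H}\nor{Y}{H})^{1/2}+\nor{F}{H}$. By the transmission conditions these traces are the interface data for the wave component: $w_1|_\gamma=u|_\gamma$, which gives $w_0|_\gamma=(u|_\gamma+g_0|_\gamma)/(i\eta)$, and $\partial_\nu w_0|_\gamma$ is likewise given by $\partial_\nu u|_\gamma$.

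The core step is to control $(w_0,w_1)$ in $\Omega_W$ using Assumption~\ref{a:obserwave}. The function $w_0$ solves the Helmholtz equation $-\eta^2w_0-\Delta w_0=\tilde g$ in $\Omega_W$, with $\tilde g$ expressed through $g_0,g_1$. I will glue $w_0$ with an extension $z$ into $\Omega_H$ that matches its Cauchy data on $\gamma$. Concretely, $z=\chi\,E(w_0|_\gamma,\partial_\nu w_0|_\gamma)$, a cutoff lifting of the traces. The glued function $\tilde w$ then solves $-\eta^2\tilde w-\Delta\tilde w=G$ on all of $\Omega$ with Dirichlet conditions, where $G$ is supported in $\Omega_W\cup\operatorname{supp}\chi$. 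The observability inequality of Assumption~\ref{a:obserwave} is equivalent, by the Burq--Zworski/Miller resolvent characterization, to
\begin{equation*}
\nor{\nabla\tilde w}{L^2(\Omega)}+|\eta|\nor{\tilde w}{L^2(\Omega)}\lesssim \nor{G}{L^2}+|\eta|\nor{\tilde w}{L^2(\Omega_H)}.
\end{equation*}
The right-hand side involves only $z$ and $F$, so it is bounded by powers of $|\eta|$ times the interface traces.

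I expect the main obstacle to be this last part: building the extension $z$ and tracking the powers of $|\eta|$ lost through the traces. These losses come from dividing $u|_\gamma$ by $\eta$, from the $H^{3/2}$-type regularity needed for $\partial_\nu u|_\gamma$, and from the $\eta^2 z$ term in $G$. Collecting all contributions, I expect a bound of the form $\nor{Y}{H}^2\lesssim|\eta|^{12}\nor{F}{H}\nor{Y}{H}+|\eta|^{12}\nor{F}{H}^2$. Absorbing $\nor{Y}{H}$ by Young's inequality then gives the resolvent bound with exponent $6$, and Borichev--Tomilov gives the stated decay rate $t^{-1/6}$.
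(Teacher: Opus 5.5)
\textbf{There is a genuine gap at the interface step.} The paper does not prove this statement; it is quoted from \cite{ZZ:07}, so your sketch has to stand on its own. Its outer frame is sound: the dissipation identity, Poincar\'e on $\Omega_H$, the resolvent form of Assumption~\ref{a:obserwave}, and Borichev--Tomilov.

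The trouble is what you extract from the heat component. On $\gamma$, $u$ has no boundary condition of its own, only the coupling with $w$. So there is no estimate $\|u\|_{H^2}\lesssim \|\Delta u\|_{L^2}+\|u\|_{H^1}$ near $\gamma$ to interpolate with, and the regularity you need is in fact false for general $F\in H$. Take $F=(0,g_0,0)$. If $u$ were $H^{3/2+\e}$ near $\gamma$, then $\partial_\nu w_0=\partial_\nu u\in H^{\e}(\gamma)$. Elliptic regularity for $w_0$ would give $w_0|_\gamma\in H^{1+\e}$, hence $g_0|_\gamma=i\eta w_0|_\gamma-u|_\gamma\in H^{1+\e}$, which fails for a general $g_0\in H^1_{\Gamma_W}(\Omega_W)$.

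So the Cauchy data of $w_0$ on $\gamma$ are controlled only in $H^{1/2}\times H^{-1/2}$, with $\|\partial_\nu u\|_{H^{-1/2}}\lesssim \|\nabla u\|+\|\Delta u\|$. Such a pair is in general not the Cauchy data of any function with $L^2$ Laplacian: the Neumann part must equal the Dirichlet-to-Neumann image of the Dirichlet part modulo $H^{1/2}$. Consequently the lifting $z=\chi E(w_0|_\gamma,\partial_\nu w_0|_\gamma)$ with $G\in L^2$ does not exist, and the $L^2$-source inequality cannot be applied to $\tilde w$. The exponent $12$ (hence $6$) is also asserted rather than computed.

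\textbf{How to repair it.} Work one derivative lower. Glue at the $H^1$ level: set $\tilde w=w_1/(i\eta)$ on $\Omega_W$ and $\tilde w=u/(i\eta)$ on $\Omega_H$. This lies in $H^1_0(\Omega)$ exactly because $u|_\gamma=w_1|_\gamma$. Then $G=(-\eta^2-\Delta)\tilde w\in H^{-1}(\Omega)$:
\begin{itemize}
\item the mismatch of normal derivatives is a single layer on $\gamma$; by Green's formula on $\Omega_H$ its pairing with $\phi\in H^1_0(\Omega)$ is bounded by $(\|\Delta u\|+\|\nabla u\|)\|\phi\|_{H^1}$;
\item the remaining terms are bounded, for $|\eta|\ge 1$, by $(|\eta|\|u\|+\|F\|_H)\|\phi\|_{H^1}$.
\end{itemize}
Hence $\|G\|_{H^{-1}}\lesssim |\eta|\delta+\|F\|_H$ with $\delta=(\|F\|_H\|Y\|_H)^{1/2}$. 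Replace your inequality by its $L^2\times H^{-1}$ version,
\begin{equation*}
\|v\|_{L^2(\Omega)}\lesssim \|(\eta^2+\Delta)v\|_{H^{-1}(\Omega)}+\|v\|_{L^2(\Omega_H)},\qquad v\in H^1_0(\Omega).
\end{equation*}
This follows from Assumption~\ref{a:obserwave}, because the time derivative of an $H^1_0\times L^2$ wave is an $L^2\times H^{-1}$ wave. Then recover $\nabla\tilde w$ from the energy identity $\|\nabla\tilde w\|^2=\eta^2\|\tilde w\|^2+\operatorname{Re}\langle G,\tilde w\rangle$. This gives $\|Y\|_H\lesssim \eta^2\delta+|\eta|\|F\|_H$, hence a resolvent bound of order $|\eta|^4$. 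Any exponent $\le 6$ yields the stated rate.

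\textbf{A second, smaller gap: $i\R\subset\rho(A)$.} This does not follow from unique continuation alone, because for $n\ge 2$ the resolvent of $A$ is not compact. The $w_0$-component of $A^{-1}(0,b,0)$ is obtained from $b|_\gamma$ through Dirichlet-to-Neumann maps, an operator of order $0$, so excluding eigenvalues is not enough. Instead, use that the a priori bound above holds for every $Y\in\D(A)$ and every real $\eta\neq 0$, with constants depending on $\eta$. Since $\{\operatorname{Re} z>0\}\subset\rho(A)$, a point $i\eta\in\sigma(A)$ would lie in $\partial\sigma(A)$ and so be an approximate eigenvalue, which the bound forbids. Finally, $0\in\rho(A)$ is given.
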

For some different geometric conditions, Duyckaerts proved a better decay.
\begin{theorem}\cite[Theorem 2.]{D:07}
\label{t:Duyck}Assume that $\Omega_W$ and $\Omega_H$ are smooth domains and that the $C^{\infty}$ manifolds $\gamma$, $\Gamma_W$ and $\Gamma_H$, each of them assumed not empty, have empty intersections
so that
\begin{equation*}
    \d\Omega_W =\gamma\cup \Gamma_W, \quad     \d\Omega_H =\gamma\cup \Gamma_H.
\end{equation*}
Assume that $\gamma\subset \d\Omega_W$ satisfies the \eqref{assumGCC} inside of $\Omega_W$. Then for any $s<1$, there is a constant $C_s>0$ such that for any $(u_0, w_0,w_1) \in \D(A)$ the solution of \eqref{waveheat} satisfies
\begin{equation*}
  \nor{( u(t),w(t), \partial_t w(t))}{H}\leq \frac{C}{t^{s}}\nor{(u_0,w_0,w_1)}{\D(A)}, \forall t >0 .
\end{equation*}
\end{theorem}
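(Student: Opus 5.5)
The statement is a decay estimate for orbits starting in $\D(A)$. I would not work with the semigroup directly. Instead I would prove a resolvent bound on the imaginary axis and then convert it into decay with Theorem~\ref{t:BD} of Batty--Duyckaerts, as in the proof of Theorem~\ref{t:perresolv}. Concretely, the target is
\begin{equation*}
\nor{R(i\eta,A)}{\mathcal{L}(H)}\leq C(1+|\eta|),\quad \eta\in\R,
\end{equation*}
that is, $M(\eta)\lesssim 1+|\eta|$. Theorem~\ref{t:BD} then gives $\nor{e^{tA}A^{-1}}{\mathcal{L}(H)}\leq C\,\frac{\log t}{t}$ for large $t$, and $\frac{\log t}{t}\leq C_s t^{-s}$ for every $s<1$. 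Bounded times are handled by contractivity and $0\in\rho(A)$ (\cite{ZZ:07}). Hence the statement reduces to the resolvent bound. Note that $i\R\subset\rho(A)$ follows from the same estimate together with a unique continuation argument for eigenvalues.

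To prove the resolvent bound I would argue by contradiction, in the semiclassical style of Lebeau~\cite{L:96}. Suppose there are $|\eta_n|\to\infty$ and $Y_n=(u_n,w_{0,n},w_{1,n})$ with $\nor{Y_n}{H}=1$ and $\eta_n\nor{(i\eta_n-A)Y_n}{H}\to 0$; write $F_n=(i\eta_n-A)Y_n$. The argument then has three steps.

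First step, dissipation. Taking the real part of $\langle F_n,Y_n\rangle_H$ gives
\begin{equation*}
\nor{\nabla u_n}{L^2(\Omega_H)}^2\leq \nor{F_n}{H}=o(\eta_n^{-1}).
\end{equation*}
Then the heat equation $i\eta_n u_n-\Delta u_n=f_n$ shows that $u_n$ itself is small. Using a boundary-layer analysis of width $|\eta_n|^{-1/2}$, or interpolation of traces between $H^1$ and $\Delta u_n\in L^2$, I expect to bound both traces $u_n|_\gamma$ and $\partial_\nu u_n|_\gamma$ by quantities that are $o(1)$ after the natural $\eta_n$ weights.

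Second step, transfer through the interface. The transmission conditions $w_{1,n}=u_n$ and $\partial_\nu w_{0,n}=\partial_\nu u_n$ on $\gamma$, combined with $w_{1,n}=i\eta_n w_{0,n}+o(\eta_n^{-1})$, give Cauchy data of $w_{0,n}$ on $\gamma$ of order $o(1)$ in the semiclassical scaling $h=\eta_n^{-1}$. Here $w_{0,n}$ solves the Helmholtz equation $-\Delta w_{0,n}-\eta_n^2 w_{0,n}=o(\eta_n)$ in $\Omega_W$ with Dirichlet condition on $\Gamma_W$.

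Third step, propagation. Attach a semiclassical defect measure $\mu$ to $(w_{0,n},w_{1,n})$. It is invariant along the generalized bicharacteristic flow (Melrose--Sj\"ostrand) and has total mass $1$. The smallness of the Cauchy data on $\gamma$ kills $\mu$ near the non-diffractive points of $\gamma$. The Geometric Control Condition of $\gamma\subset\d\Omega_W$ then forces $\mu=0$, so $\nor{Y_n}{H}\to0$, which is a contradiction. The low-frequency range $|\eta|\leq R$ is handled by compactness and $i\R\subset\rho(A)$.

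The step I expect to be the main obstacle is the second one: controlling the heat traces on $\gamma$ sharply enough in $\eta$. A crude trace estimate loses a power $|\eta|^{1/4}$ or $|\eta|^{1/2}$ and would give only the weaker Zhang--Zuazua rate of Theorem~\ref{t:XuZuaz}. I would first try a parametrix for $i\eta-\Delta$ near $\gamma$ built from tangential pseudodifferential calculus, using that the heat equation is elliptic at frequency $\eta$ with a decaying boundary layer. This should give the Dirichlet-to-Neumann relation $\partial_\nu u\approx -\sqrt{i\eta}\,u$ on $\gamma$. Inserted into the wave side, that relation makes $\gamma$ act as an absorbing boundary, and it is where the smoothness and disjointness of $\gamma,\Gamma_W,\Gamma_H$ assumed in the statement is used. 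If the resulting loss is exactly one power of $\eta$, Theorem~\ref{t:BD} yields $t^{-s}$ for all $s<1$, with the logarithm accounting for the loss of the endpoint $s=1$.
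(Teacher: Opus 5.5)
Your architecture is the right one, and in substance it is the route of \cite{D:07}; the present paper gives no proof and only quotes the result. You prove $\nor{R(i\eta,A)}{\mathcal{L}(H)}\lesssim 1+|\eta|$ by contradiction with semiclassical measures and the Geometric Control Condition on $\gamma$, then convert with Theorem~\ref{t:BD}. The reduction, the low-frequency part and the propagation step are standard. Two small corrections:
\begin{itemize}
\item The dissipation identity must be computed with the inner product of the energy norm $|\cdot|_H$. The full $H$-norm contains $\nor{w_0}{L^2}^2$, and $A$ is not dissipative for it.
\item The wave remainder is $-\Delta w_{0,n}-\eta_n^2w_{0,n}=f_{3,n}+i\eta_n f_{2,n}=o(1)$ in $L^2(\Omega_W)$, i.e.\ $o(h)$ after scaling. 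That is what invariance of the measure requires; the $o(\eta_n)$ you wrote would not suffice.
\end{itemize}

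The genuine gap is your second step. It is the whole content of the theorem, and you only announce it. The Dirichlet trace is harmless: $\nor{u_n}{L^2(\gamma)}\lesssim\nor{u_n}{H^1(\Omega_H)}=o(\eta_n^{-1/2})$. What you need is $\nor{\d_\nu u_n}{L^2(\gamma)}\to 0$. Neither the trace theorem nor interpolation with $\Delta u_n\in L^2$ gives this, because $H^2$-regularity of $u_n$ depends on the unknown interface data.

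The mechanism you propose, $\d_\nu u\approx-\sqrt{i\eta}\,u$, is wrong precisely in the regime that fixes the exponent. The heat Dirichlet-to-Neumann map at the interface has symbol $\sqrt{|\xi'|^2+i\eta}$, with $\xi'$ the tangential frequency. This reduces to $\sqrt{i\eta}$ only for $|\xi'|\ll|\eta|^{1/2}$, i.e.\ near-normal incidence. For rays hitting $\gamma$ obliquely, $|\xi'|\sim|\eta|$, the heat side is essentially a real elliptic problem. Only a fraction $\sim|\eta|^{-1}$ of the energy is then absorbed per reflection, against $|\eta|^{-1/2}$ at normal incidence. Your relation would lead you to the one-dimensional bound $|\eta|^{1/2}$, which fails in general in higher dimension.

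The correct mechanism runs as follows. From $i\eta_n u_n-\Delta u_n=f_{1,n}$ one gets
$\operatorname{Re}\int_\gamma \d_{\nu_1}u_n\,\overline{u_n}=\nor{\nabla u_n}{L^2(\Omega_H)}^2-\operatorname{Re}\langle f_{1,n},u_n\rangle=o(\eta_n^{-1})$. Moreover $\operatorname{Re}\sqrt{|\xi'|^2+i\eta}\geq 2^{-1/2}\bigl|\sqrt{|\xi'|^2+i\eta}\bigr|$, and $\bigl|\sqrt{|\xi'|^2+i\eta}\bigr|\lesssim|\eta|$ for $|\xi'|\lesssim|\eta|$. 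A parametrix for $i\eta-\Delta$ near $\gamma$ then gives, microlocally in that region, $\nor{\d_\nu u}{L^2(\gamma)}^2\lesssim|\eta|\operatorname{Re}\int_\gamma\d_{\nu_1}u\,\overline{u}$ plus remainders. Hence the normal trace is exactly $o(1)$: the power $1$ is the threshold at which dissipation controls the normal trace.

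The tangential frequencies $|\xi'|\gg|\eta|$, where this inequality fails, must be treated separately. There $w_{0,n}$ is evanescent, and the two Dirichlet-to-Neumann maps together with the transmission conditions form an elliptic boundary system. Until this interface analysis is carried out, the proposal does not establish the rate.
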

Note that the assumption  $\Gamma_W$ not empty ensures that the square of the energy $E_b$ described in the result is a norm equivalent to $\nor{\cdot}{H}$. We will now use these theorems and our abstract result to obtain periodic solutions for the forced system
\bneqn
\label{waveheatsource}
\partial_t u-\Delta_g u=&f\quad &\textnormal{ on }[0,T]\times \Omega_H\\
\partial_t^2 w-\Delta_g w=&g\quad &\textnormal{ on }[0,T]\times \Omega_W\\
\partial_t w=&u \quad &\textnormal{ on }[0,T]\times \gamma\\
\partial_{\nu} u=&\partial_{\nu}  w \quad &\textnormal{ on }[0,T]\times \gamma\\
w=&0 \quad &\textnormal{ on }[0,T]\times \Gamma_{W}\\
u=&0 \quad &\textnormal{ on }[0,T]\times \Gamma_H\\
u(0)=&u_0\quad &\textnormal{ on } \Omega_H\\
(w,\partial_t w)(0)=&(w_0,w_1)&\textnormal{ on } \Omega_W.
\eneqn
\begin{theorem}\label{t:perheatwave}
Under the assumptions of Theorem~\ref{t:XuZuaz}, then, for any $T>0$, $f\in  W^{7,1}_{per,0}([0,T],L^2(\Omega_H))$ and $g\in W^{7,1}_{per,0}([0,T],L^2(\Omega_W))$, there exists $(u_0,w_0,w_1)\in H$ so that the unique solution $(u,w,\partial_t w)\in C^0([0,T], H)$ to \eqref{waveheatsource} is $T$-periodic. 

Under the assumptions of Theorem~\ref{t:Duyck}, the same result holds for $f\in  W^{2,1}_{per,0}([0,T],L^2(\Omega_H))$ and $g\in W^{2,1}_{per,0}([0,T],L^2(\Omega_W))$.
\end{theorem}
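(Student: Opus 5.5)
The plan is to apply Corollary~\ref{cor:decaypol}, second bullet, to the first-order formulation of \eqref{waveheatsource} on the phase space $H$. We write $Y=(u,w,\partial_t w)$, so that \eqref{waveheatsource} reads $\partial_t Y = AY + F$ with forcing $F=(f,0,g)$. Since
$$\nor{F}{W^{k,1}_{per}([0,T],H)}\lesssim \nor{f}{W^{k,1}_{per}([0,T],L^2(\Omega_H))}+\nor{g}{W^{k,1}_{per}([0,T],L^2(\Omega_W))},$$
and the conditions $F^{(j)}(0)=F^{(j)}(T)$ together with the vanishing conditions defining $W^{k,1}_{per,0}$ hold componentwise, we get $F\in W^{k,1}_{per,0}([0,T],H)$. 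Hence the existence of $(u_0,w_0,w_1)$ with a $T$-periodic solution follows once the hypotheses of the Corollary are checked and $k>1/\beta_0$.

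\textbf{Step 1: Assumptions~\ref{assumptions}.} By \cite[Theorem 1.]{ZZ:07}, $A$ generates a contraction semigroup on $H$, so it is uniformly bounded with $M=1$, and $0\in\rho(A)$. For $i\R\subset\rho(A)$, I will deduce it from the decay itself. Theorem~\ref{t:XuZuaz} (or Theorem~\ref{t:Duyck}) gives $\nor{e^{tA}A^{-1}}{\mathcal{L}(H)}\to 0$, which for a bounded semigroup is equivalent to $\sigma(A)\cap i\R=\emptyset$ (the Batty--Duyckaerts characterization recalled in the Appendix, Theorem~\ref{t:BD}). A more elementary route also works: an eigenvalue $i\eta$ with eigenvector $Y_0\in\D(A)$ would give $\nor{e^{tA}Y_0}{H}=\nor{Y_0}{H}$, which contradicts the decay. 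Together with the standard fact that for contraction semigroups $\sigma(A)\cap i\R$ consists of approximate eigenvalues, this also rules out spectrum on $i\R$. I expect this step, and the fact that the $\D(A)$ norm used in the cited decay results matches the graph norm, to be the main point needing care; I would simply invoke the Batty--Duyckaerts equivalence.

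\textbf{Step 2: computing $\beta_0$.} The decay bounds $C t^{-1/6}$ and $C_s t^{-s}$ are singular at $t=0$. Combining them with $\nor{e^{tA}}{}\le 1$ gives continuous decreasing majorants $h(t)=C\left<t\right>^{-1/6}$ and $h(t)=C_s\left<t\right>^{-s}$, as in the proof of Corollary~\ref{cor:decaypol}. Under the assumptions of Theorem~\ref{t:XuZuaz} this yields $\beta_0\ge 1/6$, so $1/\beta_0\le 6<7$ and $k=7$ is allowed. Under the assumptions of Theorem~\ref{t:Duyck} we have $\beta_0\ge s$ for every $s<1$, hence $\beta_0\ge 1$ and $k=2>1\ge 1/\beta_0$ is allowed.

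\textbf{Step 3: the density hypothesis.} The proof of Theorem~\ref{t:interp} goes through Lemma~\ref{lm:gainder} with $E_3=H$, $p=1$. That lemma requires $C^k_{per,0}([0,T],\D(A^k))$ to be dense in $W^{k,1}_{per,0}([0,T],H)$. I would obtain this by regularizing $F$ in space with $(\id-\e A)^{-k}$, which commutes with $\partial_t$ and preserves the periodic and vanishing conditions, converges strongly to the identity on $H$, and maps into $\D(A^k)$. Then I would mollify in time in a periodic way. With these three steps, Corollary~\ref{cor:decaypol} gives the periodic solution, together with uniqueness and the attractivity statement.
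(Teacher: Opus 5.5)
Your proposal follows the paper's proof. The paper also writes \eqref{waveheatsource} as $\partial_t X=AX+F$ on $H$ with $F=(f,0,g)$, reads off $\beta_0\geq 1/6$ (resp.\ $\beta_0\geq 1$) from Theorem~\ref{t:XuZuaz} (resp.\ Theorem~\ref{t:Duyck}), and applies the second bullet of Corollary~\ref{cor:decaypol} with $k=7$ (resp.\ $k=2$). It does not re-check $i\R\subset\rho(A)$ or the density hypothesis of Lemma~\ref{lm:gainder}, which your Steps 1 and 3 add.

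Step~3 as written has a flaw. A plain periodic mollification in time does not preserve the conditions $F^{(j)}(0)=0$ for $j\le k-1$. There are two ways to fix it:
\begin{itemize}
\item Subtract the correction $\sum_{j<k}\tilde F^{(j)}(0)\,\chi_j$ from the mollified function $\tilde F$, where the $\chi_j$ are fixed smooth periodic functions with $\chi_j^{(i)}(0)=\delta_{ij}$. This correction tends to $0$ because $W^{k,1}\subset C^{k-1}$.
\item Avoid density altogether with the standard identity $A\int_0^T e^{A(T-s)}F(s)\,ds=-F(T)+e^{AT}F(0)+\int_0^T e^{A(T-s)}F'(s)\,ds$, valid for $F\in W^{1,1}([0,T],H)$, iterated $k$ times.
\end{itemize}
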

\bnp
We want to apply Corollary~\ref{cor:decaypol} for $\X=H$ with the previously defined $A$ and its associated semigroup.  System~\eqref{waveheatsource} can be written $\partial_t X(t)=AX(t)+F(t)$ with $X(t)=( u(t),w(t), \partial_t w(t))\in \X$ and $F=(f,0,g)$. Theorem~\ref{t:XuZuaz} gives $\beta_0\geq 1/6$, so Corollary~\ref{cor:decaypol} works for $k>6$ and gives the existence of a periodic solution if the source term satisfies $F\in W^{7,1}_{per,0}([0,T],\X)$.  That is $f\in  W^{7,1}_{per,0}([0,T],L^2(\Omega_H))$ and $g\in W^{7,1}_{per,0}([0,T],L^2(\Omega_W))$.

Under the assumptions of Theorem~\ref{t:Duyck}, we obtain $\beta_0\geq 1$, which gives $k>1$ and a similar result.
\enp
Some results in the spirit of Theorem~\ref{t:perheatwave} have been obtained recently in Mosny-Muha-Schwarzacher-Webster~\cite{MMSW:24}.  The geometric assumptions differ but seem to be stronger and less natural than the Geometric Control Condition.  They are of multiplier type, which is often known to be more restrictive than the Geometric Control Condition \ref{assumGCC}, see Miller~\cite{M:02} in the context of control problems.  Concerning the regularity, they require, depending on some geometric condition, either
\begin{itemize}
    \item $f\in  H^3_{per}([0,T],H^1_{\Gamma_H}(\Omega_H)')$ and $g\in H^{6}_{per}([0,T],L^2(\Omega_W))$ to get some solution $(u,w,\partial_t w)\in H^{3}_{per}([0,T],H^1_{\Gamma_H}(\Omega_H))\times L^{\infty}_{per}([0,T],H^1_{\Gamma_W}(\Omega_H))\times L^{\infty}_{per}([0,T],L^2_{\Gamma_H}(\Omega_W))$.
    \item $f\in  H^4_{per}([0,T],H^1_{\Gamma_H}(\Omega_H)')$ and $g\in H^{8}_{per}([0,T],L^2(\Omega_W))$ to get some solution $(u,w,\partial_t w)\in H^{4}_{per}([0,T],H^1_{\Gamma_H}(\Omega_H))\times L^{\infty}_{per}([0,T],H^1_{\Gamma_W}(\Omega_H))\times L^{\infty}_{per}([0,T],L^2_{\Gamma_H}(\Omega_W))$.
\end{itemize}
We refer to the article for results at other regularity levels.  It seems that their regularity is more demanding than the second geometric configuration of Theorem~\ref{t:perheatwave} with smooth boundaries.  Our first case is slightly worse than their first case and slightly better than their second case for the wave part.  Regarding the heat part, the regularity they require is less demanding, suggesting that our result could be improved.  We believe it can be done by choosing a better admissible space, taking into account the regularizing effect of the heat equation, which allows a source term with less regularity in space but $L^2$ in time.
\section{A counterexample for slow decay}
The objective of this section is the construction of growing solutions announced in Theorem~\ref{t:counter}. In all this part, we are therefore in the configuration of the linear damped Schr\"odinger equation on $\mathbb{S}^2$ and the assumptions of Theorem~\ref{t:counter}, that is $a$ is zero in a neighborhood of one equator.
\bnp[Proof of Theorem~\ref{t:counter}] We fix $k\in \N$ for the rest of the proof. For any $f\in L^1_{per}([0,2\pi],H^k(\M))$, we will denote $u_f\in C([0,+\infty),L^2(\M))$ the solution of \eqref{dampedSchrodper}, that is $u_f(t)=\int_0^t e^{\A(t-s)}f(s)ds$ where $\A= -i  \Delta_g-a(x)$ is the generator of the damped Schr\"odinger equation.

For any $m\in \N^*$ and $n\in \N^*$, we denote 
\begin{align*}
 F_{m,n}=   \left\{f\in L^1_{per}([0,2\pi],H^k(\M))\textnormal{ s.t. } \nor{u_f( 2\pi n)}{L^2}\leq  m\nor{f}{L^1_{per}([0,T],H^k(\M))}\right\}.
\end{align*}
We observe that the application $f\mapsto u_f( 2\pi n)=\int_0^{2\pi n} e^{\A(2\pi n-s)}f(s)ds$ is linear continuous from $L^1_{per}([0,T],H^k(\M))$ to $L^2(\M)$. This implies that $F_{m,n}$ is a closed subset of $L^1_{per}([0,2\pi],H^k(\M))$. In particular, $F_{m}= \cap_{n\in \N^*}F_{m,n}$ is also a closed subset. We prove that $F_m$ has empty interior for any $m\in \N^*$.

Now, we fix $m\in \N^*$ and assume by contradiction that there exists $h\in L^1_{per}([0,T],H^k(\M))$ and $\e>0$ so that $B(h,\e)\subset F_m$, where $B(h,\e)$ is the open ball in $L^1_{per}([0,2\pi],H^k(\M))$. We apply the second part of Proposition~\ref{p:largegrowth} below with $M=4m \left(\nor{h}{L^1_{per}([0,2\pi],H^k(\M))}+ \e\right)/\e$ to obtain one function $f_M\in L^1_{per}([0,2\pi],H^k(\M))$ with $\nor{f_M}{L^1_{per}([0,2\pi],H^k(\M))}\leq 1$ and one $n\in \N^*$ so that the solution $u_{f_M}$ of \eqref{dampedSchrodper} satisfies $\nor{u_{f_M}( 2\pi n)}{L^2}\geq M$. Now, consider $z=h+\frac{\e}{2}f_M\in B(h,\e)\subset F_m$. $z\in F_m=\cap_{n\in \N^*}F_{m,n}$ implies $z\in F_{m,n}$ and therefore 
\begin{equation*}
\nor{u_z(2\pi n)}{L^2}\leq  m\nor{z}{L^1_{per}([0,2\pi],H^k(\M))}\leq m \left(\nor{h}{L^1_{per}([0,2\pi],H^k(\M))}+ \frac{\e}{2}\right).
\end{equation*}
By linearity, we have $u_z=u_h+\frac{\e}{2}u_{f_M}$. Using similarly that $h\in F_{m,n}$, which implies  $\nor{u_h(2\pi n)}{L^2}\leq m \nor{h}{L^1_{per}([0,2\pi],H^k(\M))}$, and $\nor{u_{f_M}(2\pi n)}{L^2}\geq M$, we get, the lower bound
\begin{align*}
  \nor{u_z(2\pi n)}{L^2}&\geq \frac{\e}{2}\nor{u_{f_M}(2\pi n)}{L^2}-\nor{u_h(2\pi n)}{L^2} \\
  &\geq \frac{\e}{2}M- m\nor{h}{L^1_{per}([0,2\pi],H^k(\M))} \\
  &\geq m \left(\nor{h}{L^1_{per}([0,2\pi],H^k(\M))}+ 2\e\right).
\end{align*}
This is a contradiction. 

So, we have proved that for any $m\in \N^*$, $F_m$ has empty interior.  By the Baire theorem, that proves that $F=\cup_{m\in \N^*}F_m$ has empty interior and also that $G:=F^c$ is a $G_{\delta}$ dense subset of $L^1_{per}([0,2\pi],H^k(\M))$. We prove that for any $f\in G$, $\left(\nor{u_f(2\pi n)}{L^2}\right)_{n\in \N^*}$ is unbounded where $u_f$ is the associated solution to \eqref{dampedSchrodper}. Fix  $f\in G$ and let $D>0$ arbitrary. First, we have that $0\in F_{m,n}$ for any $(m,n)\in (\N^*)^2$, so that $0\in F$. In particular, $f\in G$ implies $f\neq 0$. We select $m\in \N^*$ with $m\geq \frac{D}{\nor{f}{L^1_{per}([0,2\pi],H^k(\M))}}$. Since $f\in G=\cap_{m\in \N^*}F_m^c$, by definition, $f\notin F_m$. In particular, since $F_m=\cap_{n\in \N^*}F_{m,n}$, there exists $n\in \N^*$ so that $f\notin F_{m,n}$. That is $\nor{u_f(2\pi n)}{L^2}>  m\nor{f}{L^1_{per}([0,2\pi],H^k(\M))}\geq D$. For fixed $f\in G$, we have proved that for any $D>0$, there exists $n\in \N^*$ so that $\nor{u_f(2\pi n)}{L^2}> D$, which is the first expected result.

The absence of a periodic solution for such $f\in G$ of solution of \eqref{dampedSchrodperu0} for any $v_0\in L^2(\M)$ is a consequence of Proposition~\ref{p:convuniq} since the existence of a periodic solution would imply that the solution of \eqref{dampedSchrodper} would converge to this solution. But we can be more precise. Indeed, for the second part of the result, we make a similar decomposition as Proposition~\ref{p:convuniq}.  We decompose the solution $v$ of \eqref{dampedSchrodperu0} as $v=u_f+v_L$ where $u_f$ is the solution of \eqref{dampedSchrodper} and $v_L(t)=e^{t\mathcal{A}}v_0$, with $\mathcal{A}_S= -i  \Delta_g-a(x)$, is the solution of the damped equation \eqref{dampedSchrod}. In particular, since $(v_L(2\pi n))_{n\in \N}$ is bounded in $L^2(\M)$ but $(u_f(2\pi n))_{n\in \N}$ is unbounded, then $(v(2\pi n))_{n\in \N}$ is unbounded.
\enp
\begin{prop}
\label{p:largegrowth}
    Assume that we are in one of the geometric situations of Theorem~\ref{t:counter}, then, there exists a sequence of eigenvalues $(\lambda_j)_{j\in \N}$, $\lambda_j\to +\infty$, $\lambda_j\in \N$ and eigenfunctions $\Phi_{\lambda_j}$ of $-\Delta_g$ so that 
    \begin{equation}
    \label{e:propeigen}
    \begin{split}
              -  \Delta_g \Phi_{\lambda_j}&=\lambda_j\Phi_{\lambda_j},\\
         \nor{ \Phi_{\lambda_j}}{L^2(\M)}&=1,\\
      \nor{a(x) \Phi_{\lambda_j}}{L^2(\M)}&\leq e^{-c\sqrt{\lambda_j}},
    \end{split}     
    \end{equation}
for some $c>0$. Moreover, for any $k\in \N$ and $M>0$, there exists $ f\in L^1_{per}([0,2\pi],H^k(\M))$ with $\nor{f}{L^1_{per}([0,2\pi],H^k(\M))}\leq 1$ and $n\in \N$, so that the solution $u_f$ of \eqref{dampedSchrodper} satisfies $\nor{u_f(2\pi n)}{L^2}\geq M$.
\end{prop}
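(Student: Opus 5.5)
The plan has two parts: first produce quasimodes (exact eigenfunctions) concentrated near the undamped equator, then use them as resonant forcing at the natural frequency of the equation with $T=2\pi$.

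\textbf{Eigenfunctions.} On $\mathbb{S}^2$ the eigenvalues of $-\Delta_g$ are $\lambda=\ell(\ell+1)\in\N$. I will take the highest weight spherical harmonics
$$\Phi_\ell=c_\ell (x_1+ix_2)^\ell,$$
normalized in $L^2$ and associated with the equator $\{x_3=0\}$ (after a rotation, this is the equator near which $a$ vanishes). One has $|\Phi_\ell|^2=c_\ell^2(1-x_3^2)^\ell$ with $c_\ell^2\sim C\sqrt{\ell}$. On the support of $a$ we have $|x_3|\geq\delta$, hence
$$|\Phi_\ell|^2\leq C\sqrt\ell\,(1-\delta^2)^\ell\leq e^{-2c\ell}.$$
Since $\ell\sim\sqrt{\lambda_\ell}$, this gives $\nor{a\Phi_\ell}{L^2}\leq e^{-c\sqrt{\lambda_\ell}}$, which is \eqref{e:propeigen}.

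\textbf{Resonant forcing.} Set $\mathcal{A}_S=-i\Delta_g-a$, so that $e^{t\mathcal{A}_S}\Phi=e^{i\lambda t}\Phi+r(t)$, with $\lambda\in\N$ so that $e^{i\lambda t}$ is $2\pi$-periodic. I choose
$$f(t)=\lambda_j^{-k/2}\,e^{i\lambda_j t}\,\Phi_{\lambda_j}.$$
This $f$ is $2\pi$-periodic and its $L^1_{per}([0,2\pi],H^k)$ norm is $\lesssim 1$, so after a harmless constant rescaling it has norm at most $1$. To estimate the error, write $w(t)=e^{t\mathcal{A}_S}\Phi-e^{i\lambda t}\Phi$. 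Then $w$ solves $\partial_t w=\mathcal{A}_S w-a e^{i\lambda t}\Phi$ with $w(0)=0$. By contractivity of the semigroup and Duhamel's formula,
$$\nor{w(t)}{L^2}\leq t\,e^{-c\sqrt\lambda}.$$

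\textbf{Growth.} Then
$$u_f(2\pi n)=\lambda^{-k/2}\int_0^{2\pi n}e^{\mathcal{A}_S(2\pi n-s)}e^{i\lambda s}\Phi\,ds.$$
In the main term, $e^{i\lambda(2\pi n-s)}e^{i\lambda s}=1$, so it equals $2\pi n\,\lambda^{-k/2}\Phi$ (using $e^{2\pi i n\lambda}=1$). The error is bounded by
$$\lambda^{-k/2}\int_0^{2\pi n}(2\pi n-s)\,e^{-c\sqrt\lambda}\,ds\leq C n^2 e^{-c\sqrt\lambda}.$$
Given $M$, I choose $n$ with $2\pi n\lambda_j^{-k/2}=2M$, i.e.\ $n\approx M\lambda_j^{k/2}/\pi$. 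The error is then at most
$$C M^2\lambda_j^{k}e^{-c\sqrt{\lambda_j}},$$
which is $\leq M$ once $j$ is large. Hence $\nor{u_f(2\pi n)}{L^2}\geq M$.

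\textbf{Main obstacle.} The delicate step is balancing the linear resonant growth $n\lambda^{-k/2}$ against the accumulated damping error. The $O(n^2e^{-c\sqrt\lambda})$ estimate suffices only because the damping on these eigenfunctions is exponentially small, while the Sobolev cost is only polynomial in $\lambda$. I will also check two points: that the eigenfunction computation is consistent with the sign convention in \eqref{dampedSchrodper}, where the generator gives the phase $e^{\pm i\lambda t}$ (in which case I simply match the sign of $f$ to it), and that $u_f$ is given by the Duhamel formula stated above.
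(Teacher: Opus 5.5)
Your proposal is correct and follows essentially the paper's proof: the same equatorial harmonics $(x_1+ix_2)^j$, the same resonance from $\lambda_j\in\N$ and $T=2\pi$, the same Duhamel/contractivity bound $\nor{(e^{t\mathcal{A}}-e^{t\mathcal{A}_0})\Phi_{\lambda_j}}{L^2}\leq t\, e^{-c\sqrt{\lambda_j}}$, and the same trade-off between the polynomial $H^k$ cost $\lambda_j^{k/2}$ and the exponentially small damping. The differences are minor: you force with the free phase $\lambda_j^{-k/2}e^{i\lambda_j t}\Phi_{\lambda_j}$, which is smooth and explicitly $2\pi$-periodic with an obvious $H^k$ norm, whereas the paper uses $C_j e^{(s-T)\mathcal{A}}\Phi_{\lambda_j}/T$ so that $F_T=C_j\Phi_{\lambda_j}$ exactly and \eqref{e:unT} applies, at the price of needing the backward damped flow and $H^k$ energy estimates; and you get the concentration bound from the pointwise identity $|x_1+ix_2|^2=1-x_3^2$ rather than the ball estimate of \cite{LL:21heat}.
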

\bnp
The construction of the concentrating eigenfunctions $\Phi_{\lambda_j}$ is actually very explicit and classical.  We refer, for instance, to \cite[Section 3.A.]{LL:21heat}. We consider $\mathbb{S}^2$ embedded in $\R^3$:
$$
\mathbb{S}^2 =  \{(x_1,x_2,x_3) \in \R^3 , x_1^2 + x_2^2 + x_3^2 = 1\} =  \{ x \in \R^3 ,|x|= 1\} .
$$
 Eigenfunctions and eigenvalues of the Laplace-Beltrami operator on $\mathbb{S}^2$ are well-understood: eigenfunctions are restrictions to $\mathbb{S}^2$ of harmonic homogeneous polynomials of $\R^3$, associated to the eigenvalue $j(j+1)$, where $j$ is the degree of the polynomial.  

We will use the so called equatorial spherical harmonics, given by
$$
u_j = P_j |_{\mathbb{S}^2}  \in C^\infty(\mathbb{S}^2) , \quad P_j(x_1,x_2,x_3) = (x_1 + i x_2)^j , \quad j\in \N,
$$
known to concentrate exponentially on the equator given by $x_3 = 0$.  They satisfy 
\begin{equation*}
-\Delta_{\mathbb{S}^2} u_j=\lambda_j u_j \textnormal{ with } \lambda_j=j(j+1)
\end{equation*}
and $\nor{u_j}{L^2(\mathbb{S}^2)}\sim 2^{1/2}\pi^{3/4}j^{-1/4}$ as $j \to + \infty$. Denoting $N=(0,0,1)$ the North pole and using the geodesic distance to define the ball $B(N,r)$, we have the following estimate (see 
\cite[Lemma 3.2.]{LL:21heat}) for all $k \in \N^*$ and all $r \in [0,\frac{\pi}{2})$,  
$$
\nor{u_j}{L^2(B(N,r))}^2= \frac{\pi}{j+1}\frac{\sin(r)^{2j+2}}{\cos(r)}\left(1+R \right) , \quad \text{ with }\quad |R| \leq  \frac{\tan(r)^2}{2j+2} .
$$
The same holds for the south pole $B(S,r)$. Then, we choose $\Phi_{\lambda_j}=c_i u_j$, with the normalization constant $c_j=\nor{u_j}{L^2(\mathbb{S}^2)}^{-1}\sim 2^{-1/2}\pi^{-3/4}j^{1/4}$ as $j \to + \infty$. Since we assumed that $a$ is zero in a neighborhood of the equator $x_3 = 0$ (up to rotation), they is some $r \in [0,\frac{\pi}{2})$ so that $\operatorname{supp}(a)\subset B(N,r)\cup B(S,r)$. We obtain for $j$ large enough
\begin{align*}
    \nor{a(x) \Phi_{\lambda_j}}{L^2(\M)}&\leq c_j \nor{a}{L^{\infty}(\M)} \nor{u_j}{L^2(B(N,r)\cup B(S,r))}\lesssim \frac{j^{1/4}}{\sqrt{j+1}} \sin(r)^{j+1}\lesssim e^{-d(j+1) }\lesssim e^{-d\sqrt{\lambda_j}},
\end{align*}
with $d=-\log(\sin(r))>0$ since $r \in [0,\frac{\pi}{2})$. Up to changing the subsequence so that $j$ is large enough and the previous estimates hold, we obtain the first part of the expected results, up to taking $c=d/2$. 

Now, we get back to the existence of a growing solution and write $T=2\pi$.  We denote $\A= -i  \Delta_g-a(x)$ the generator of the damped equation and $\A_0= -i\Delta_g$ the generator of the free Schr\"odinger equation.  We see that $\Phi_{\lambda_j}$ is an eigenfunction of $\A_0$ and $e^{\A_0 t}\Phi_{\lambda_j}=e^{i\lambda_j t }\Phi_{\lambda_j}$. In particular, since $T=2\pi$ and $\lambda_j\in \N$, $e^{ m T\A_0}\Phi_{\lambda_j}=\Phi_{\lambda_j}$ for any $m\in \N$.

 We pick $f(s)=f_j(s)=C_j e^{(s-T)\A} \Phi_{\lambda_j}/T$ for $s\in [0,T)$ with $j$ and $C_j>0$ to be chosen later on and we extend $f$ by periodicity. 
With the selected choice, we have $F_{T}=\int_{0}^{T}e^{\A(T-s)}f(s)ds=C_j\Phi_{\lambda_j}$. Using \eqref{e:unT}, the solution $u=u_j$ of \eqref{dampedSchrodper} satisfies
    \begin{equation}
    \label{e:decompsol}
    \begin{split}
    u(nT)&=\sum_{m=0}^{n-1}e^{ m T\A}F_T=C_ij\sum_{m=0}^{n-1}e^{ m T \A}\Phi_{\lambda_j}\\
  &=C_j\sum_{m=0}^{n-1}e^{ m T\A_0}\Phi_{\lambda_j} +C_j\sum_{m=0}^{n-1}\left(e^{mT\A }-e^{mT\A_0}\right)\Phi_{\lambda_j}\\
    &=C_j n\Phi_{\lambda_j} +C_j\sum_{m=0}^{n-1}\left(e^{mT\A }-e^{m T\A_0}\right)\Phi_{\lambda_j}.  
    \end{split}     
    \end{equation}
We now estimate the second term.  Let $v$ be the solution of 
\bneq
i\partial_t v-\Delta_g v+ia(x)v &=&0\\
v(0)&=&\Phi_{\lambda_j}
\eneq
and $w=e^{i\lambda_j t}\Phi_{\lambda_j}$ solution of
\bneq
i\partial_t w-\Delta_g w&=&0\\
w(0)&=&\Phi_{\lambda_j}.
\eneq
That is $ v(t)=e^{\A t}\Phi_{\lambda_j}$ and $ w(t)=e^{t\A_0}\Phi_{\lambda_j}=e^{i\lambda_j t}\Phi_{\lambda_j}$. $r=v-w$ is the solution of
\bneq
i\partial_t r-\Delta_g r+ia(x)r&=&-ia(x)w\\
r(0)&=&0.
\eneq
In particular, $ r(t)=\int_0^t e^{\A(t-s)}M(s)ds$ with $M(s)= -ia(x)w(s)=-ie^{i\lambda_j s}a(x)\Phi_j$. Since $a(x)\geq 0$, the $L^2$ norm of solutions of \eqref{dampedSchrod}, which gives $\nor{e^{\A(t-s)}}{\mathcal{L}(L^2)}\leq 1$, uniformly for $t-s\geq 0$. This allows to obtain the following bound, uniform in $t\geq 0$,
\begin{align*}
    \nor{r(t)}{ L^2}&\leq \int_0^t \nor{e^{\A(t-s)}M(s)}{ L^2}ds\leq \int_0^t \nor{M(s)}{ L^2}ds \leq \int_0^t \nor{a(x)\Phi_j}{L^2}ds\leq t  e^{-c\sqrt{\lambda_j}},
\end{align*}
where we have used \eqref{e:propeigen}. Notice that $\left(e^{mT\A }-e^{mT\A_0}\right)\Phi_{\lambda_j}= r(mT)$, so that 
\begin{align*}
    \nor{\left(e^{mT\A }-e^{mT\A_0}\right)\Phi_{\lambda_j}}{ L^2}& \leq mT  e^{-c\sqrt{\lambda_j}}.
\end{align*}
Getting back to \eqref{e:decompsol}, we obtain 
\begin{equation}
\label{e:lowerbound}
    \begin{split}
  \nor{  u(nT)}{L^2} &\geq C_j n\nor{\Phi_{\lambda_j}}{ L^2} -C_j\sum_{m=0}^{n-1}mT  e^{-c\sqrt{\lambda_j}}\\
  &\geq C_j n  \left(1 -nTe^{-c\sqrt{\lambda_j}}\right), 
    \end{split}
    \end{equation}
    where we have used $\nor{\Phi_{\lambda_j}}{ L^2}=1$. Now, we are ready to make some suitable choices to make the lower bound \eqref{e:lowerbound} as large as expected, keeping $f$ bounded. Since $a$ is smooth, the multiplication by $a(x)$ applies $H^k(\M)$ into itself and standard energy estimates allow to obtain a constant $C>0$ so that we have the bound $\nor{e^{(s-T)\A} v}{H^k(\M))}\leq C\nor{v}{H^k(\M))} $ for all $s\in [0,T]$, $v\in H^k(\M)$. So, we have
    \begin{align*}
    \nor{f}{L^1_{per}([0,2\pi],H^k(\M))}\leq  \frac{C_j}{T} \int_0^T \nor{e^{(s-T)\A} \Phi_{\lambda_j}}{H^k(\M)}\leq C \frac{C_j}{T} \int_0^T \nor{\Phi_{\lambda_j}}{H^k(\M)}\leq C  C_j \lambda_j^{k/2}.
    \end{align*}
    So, we select $C_j= C^{-1}  \lambda_j^{-k/2}$ in order to have $\nor{f}{L^1_{per}([0,2\pi],H^k(\M))}\leq 1$. Now, we select $n:=n_j=\lceil \lambda_j^{k/2+1}\rceil $. For $j$ large enough, $n_jTe^{-c\sqrt{\lambda_j}}=2\pi \lceil \lambda_j^{k/2+1}\rceil e^{-c\sqrt{\lambda_j}}\leq 1/2$. The lower bound \eqref{e:lowerbound} becomes
\begin{equation*}
 \nor{  u(n_jT)}{L^2} \geq C^{-1}  \lambda_j^{-k/2} \lceil \lambda_j^{k/2+1}\rceil  \left(1 -n_jTe^{-c\sqrt{\lambda_j}}\right)\geq C^{-1}  \lambda_j/2 . 
    \end{equation*}
 Taking $j$ large enough gives the expected result.   
\enp
\appendix
\section{Notations and Definitions}
\label{s:not}

\begin{definition}\label{uniformlyS}
    The system $\partial_t u=Au$, with $A$ an operator that generates a semigroup $\SS$ that follows Assumptions~\ref{assumptions}, is said 
    \emph{uniformly stable} if  
    $$\nor{e^{At}}{\mathcal{L}(\X)} \underset{t\to +\infty}{\longrightarrow}0.$$ 
\end{definition}
    Notice that this  will imply that  there exists $C>0$ such that 
     $\nor{e^{At}}{\mathcal{L}(\X)}\leq Ce^{-\lambda t}$.

\begin{definition}\label{stronglyS}
    The system $\partial_t u=Au$, with $A$ an operator that generates a semigroup $\SS$ that follows Assumptions~\ref{assumptions}, is said to be \emph{strongly stable}, if  $$\nor{e^{At}u_0}{\X} \underset{t\to +\infty}{\longrightarrow}0$$ for any $u_0\in \X$.
\end{definition}
   For a linear operator as assumed in the introduction, $\D(A)$ denotes the domain of $A$. Since $A$ is closed, this is a Banach space when given the norm 
\begin{equation*}
    \nor{x}{\D(A)}=\nor{x}{\X}+\nor{Ax}{\X}.
\end{equation*}
 $\rho(A)$ denotes the resolvent set, that is the set of $\lambda\in \C$ so that $\lambda \id-A$ is invertible from $\D(A)$ to $\X$ with inverse $R(\lambda,
A)=(\lambda \id-A)^{-1}$. $\sigma(A)$ is the spectrum, that is $\C\setminus \rho(A)$.

We assume $0\in \rho(A)$, which allows to define $A^{-1}$ from $\X$ to $\D(A)$.
$\X_{-1}$ is the completion of $\X$ for the norm $\nor{x}{\X_{-1}}=\nor{A^{-1}}{\X}$.\\

$\lceil \alpha\rceil\in \N$ is the superior integer value of $\alpha>0$, that is satisfying $\lceil \alpha\rceil-1< \alpha \leq  \lceil \alpha\rceil$.\\ 

\begin{definition}{\bf Useful spaces of periodic functions}\\    
For some Banach space $E$, we denote $L^p_{per}([0,T],E)$ the space of functions $f\in L^p_{loc}(\R,E)$ that satisfy $f(t+T)=f(t)$, almost everywhere. It is a Banach space with the norm $\nor{f}{L^p_{per}([0,T],E)}=\nor{f|_{[0,T]}}{L^p([0,T],E)}$. \\

We make similar definitions for 
\begin{itemize}
    \item $W^{k,p}_{per}([0,T],E)$ for $k\in \N$, $p\in [0,\infty]$,
  \item   $H^{k}_{per}([0,T],E)=W^{k,2}_{per}([0,T],E)$,
    \item $C^{k}_{per}([0,T],E)$.
\end{itemize}
For $k\in \N^*$, $p\in [0,\infty]$, we also define the space $W^{k,p}_{per, 0}([0,T],E)$ as follows
\begin{align}
\label{e:defper0}
 W^{k,p}_{per, 0}([0,T],E)=  \left\{ f\in W^{k,p}_{per}([0,T],E); f^{(j)}(0)=f^{(j)}(T)=0, \quad 0\leq j\leq k-1\right\}.
\end{align}
\end{definition}
In particular, for functions in $ W^{k,p}_{per, 0}([0,T],E)$, the integration by parts in time, when it is licit, does not produce boundary terms; see, for instance, Lemma~\ref{lm:gainder}. Note that, $L^p_{per}([0,T],E)$ can be identified with $L^p([0,T],E)$ by the restriction, but the space $W^{k,p}_{per}([0,T],E)$ can not be identified with $W^{k,p}([0,T],E)$ because the trace at $0$ and $T$ should be the same.

Also, these functions can be equivalently considered on the  torus $\mathbb{T}_T=\R/T\Z$, but it will sometimes be convenient to consider them as functions of $\R$, for instance when we will consider Duhamel formula.

\section{Resolvent and decay}
\begin{lemma}\cite[Lemma 2.1.]{BT:10}
    Let $\SS$ be a bounded $C^0$-semigroup on a Banach space
$\X$ with generator $A$. Then $\C_+\subset \rho(A)$ with $\C_+=\{z\in \C; \operatorname{Re}(z)>0\}$.
\end{lemma}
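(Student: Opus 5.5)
The plan is to prove that $\C_+\subset\rho(A)$ by showing that the Laplace transform of the semigroup is the resolvent. Let $M=\sup_{t\geq 0}\nor{S(t)}{\mathcal{L}(\X)}<+\infty$. For $\lambda\in\C$ with $\operatorname{Re}\lambda>0$ and $x\in\X$, define
\begin{equation*}
R_\lambda x:=\int_0^{+\infty}e^{-\lambda t}S(t)x\,dt .
\end{equation*}
This is a Bochner integral, or an improper Riemann integral, since $t\mapsto S(t)x$ is continuous. It converges absolutely because $\nor{e^{-\lambda t}S(t)x}{\X}\leq Me^{-\operatorname{Re}(\lambda) t}\nor{x}{\X}$. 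Hence $R_\lambda\in\mathcal{L}(\X)$, with $\nor{R_\lambda}{\mathcal{L}(\X)}\leq M/\operatorname{Re}\lambda$.

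\textbf{Step 1: $R_\lambda$ is a right inverse of $\lambda\id-A$.} I will show that $R_\lambda x\in\D(A)$ and $(\lambda\id-A)R_\lambda x=x$. For $h>0$, the semigroup property and a change of variables give
\begin{equation*}
\frac{S(h)-\id}{h}R_\lambda x=\frac{e^{\lambda h}-1}{h}\int_h^{+\infty}e^{-\lambda t}S(t)x\,dt-\frac1h\int_0^h e^{-\lambda t}S(t)x\,dt .
\end{equation*}
As $h\to0^+$, the first term tends to $\lambda R_\lambda x$. By strong continuity, the second term tends to $x$. So $R_\lambda x\in\D(A)$ and $AR_\lambda x=\lambda R_\lambda x-x$.

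\textbf{Step 2: $R_\lambda$ is a left inverse of $\lambda\id-A$.} For $x\in\D(A)$ we have $S(t)Ax=AS(t)x=\frac{d}{dt}S(t)x$. Since $A$ is closed and both $\int e^{-\lambda t}S(t)x\,dt$ and $\int e^{-\lambda t}S(t)Ax\,dt$ converge, we get $R_\lambda Ax=AR_\lambda x$. Combined with Step 1, this gives $R_\lambda(\lambda\id-A)x=x$ for $x\in\D(A)$. Alternatively, I can integrate by parts directly on $[0,\tau]$ and let $\tau\to\infty$; the boundary term $e^{-\lambda\tau}S(\tau)x$ vanishes because $S$ is bounded and $\operatorname{Re}\lambda>0$.

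Together, Steps 1 and 2 show that $\lambda\id-A$ is bijective from $\D(A)$ to $\X$ with bounded inverse $R_\lambda$. Therefore $\lambda\in\rho(A)$.

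\textbf{Main obstacle.} The only delicate point is justifying the exchange of the closed unbounded operator $A$ with the improper integral. I will handle it by working on truncated integrals over $[0,\tau]$, where the standard identity $A\int_0^\tau S(t)y\,dt=S(\tau)y-y$ holds, and then passing to the limit $\tau\to\infty$ using the closedness of $A$. The uniform bound $M$ together with $\operatorname{Re}\lambda>0$ guarantees that all limits exist.
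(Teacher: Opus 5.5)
Your proof is correct: it is the standard Laplace-transform argument showing that $R(\lambda,A)x=\int_0^{\infty}e^{-\lambda t}S(t)x\,dt$ for $\operatorname{Re}\lambda>0$. The paper gives no proof of this lemma and simply quotes it from Borichev--Tomilov, where it is the classical fact you have reproved.

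One small precision about your final paragraph. The truncated identity you would actually need is $(A-\lambda\id)\int_0^{\tau}e^{-\lambda t}S(t)y\,dt=e^{-\lambda\tau}S(\tau)y-y$, obtained by applying the standard identity to the rescaled semigroup $e^{-\lambda t}S(t)$, whose generator is $A-\lambda\id$. Your Steps 1 and 2 already settle the point without this identity: Step 1 uses the difference quotient, and Step 2 uses Hille's theorem for closed operators or integration by parts.
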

The following result is written this way in Borichev-Tomilov~\cite[Theorem 1]{BT:10}, but seems to originate in Arendt-Batty~\cite{AB:88} and Batty~\cite{B:90}.
\begin{theorem}\label{t:decay}
    Let $(S(t))_{t\geq 0}$ be a bounded $C^0$-semigroup on a Banach space
$\X$ with generator $A$. Suppose that $i\R$ is contained in the resolvent set $\rho(A)$
of $A$. Then
\begin{align*}
    \nor{S(t)A^{-1}}{\mathcal{L}(\X)}\underset{t\to+\infty}{\longrightarrow}0.
\end{align*}
\end{theorem}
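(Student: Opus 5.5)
The statement is Theorem~\ref{t:decay}: for a bounded $C^0$-semigroup with $i\R\subset\rho(A)$ we must show $\nor{S(t)A^{-1}}{\mathcal{L}(\X)}\to 0$. I would follow the Arendt--Batty--Lyubich--V\~u route through a Tauberian theorem for Laplace transforms. Proving the uniform operator-norm statement directly is delicate, so the plan goes in two stages. First establish strong stability, $S(t)x\to0$ for every $x$. Then upgrade to uniform convergence on the image of the unit ball under $A^{-1}$.

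\textbf{Stage 1: strong stability.} Fix $x\in\X$ and set $g(t)=S(t)x$, which is bounded and uniformly continuous on the range $A^{-1}\X$. Its Laplace transform is $\hat g(\lambda)=R(\lambda,A)x$ for $\operatorname{Re}\lambda>0$. This follows from the first lemma of this appendix, that $\C_+\subset\rho(A)$. Since $i\R\subset\rho(A)$, the resolvent extends holomorphically to a neighbourhood of $\overline{\C_+}$; indeed $\rho(A)$ is open and contains the closed half-plane.

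For $y=A^{-1}x$ the orbit $S(t)y$ is Lipschitz, because $\frac{d}{dt}S(t)y=S(t)x$ is bounded by $M\nor{x}{}$. The Ingham--Karamata/Korevaar contour argument then gives
\[
\int_0^\infty S(t)x\,dt=\lim_{\lambda\to0}R(\lambda,A)x=-A^{-1}x .
\]
Applying the same argument to the shifted orbits gives
\[
\int_\tau^\infty S(t)x\,dt=-S(\tau)A^{-1}x ,
\]
and the left side tends to $0$ as $\tau\to\infty$. Hence $S(\tau)A^{-1}x\to0$ for all $x$, and by density of $\D(A)$ together with uniform boundedness, $S(t)x\to0$ for all $x$. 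This density step is exactly the argument already used in Proposition~\ref{p:convuniq}.

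\textbf{Stage 2: the uniform upgrade.} This is the main obstacle, since pointwise convergence does not give a norm bound. I would redo the contour estimate quantitatively, following Batty--Duyckaerts and Borichev--Tomilov. For $R>0$, let $\Gamma_R$ be the boundary of the region $\{|\lambda|<R,\ \operatorname{Re}\lambda>-\delta(R)\}$. Here $\delta(R)>0$ is chosen, via compactness of $[-iR,iR]\subset\rho(A)$, so that the resolvent is bounded by some $K_R$ on that region. Write
\[
S(t)A^{-1}x=\frac{1}{2\pi i}\int_{\Gamma_R}e^{\lambda t}\Bigl(1+\frac{\lambda^2}{R^2}\Bigr)\frac{R(\lambda,A)x}{\lambda}\,d\lambda
\]
plus the Newman-type correction terms, where the factor $1+\lambda^2/R^2$ is inserted to control the semicircle.

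The contributions are then bounded as follows.
\begin{itemize}
\item The right semicircle and its mirror piece are bounded by $C M\nor{x}{}/R$, uniformly in $t$, using only $\nor{S(t)}{}\le M$.
\item The left part is bounded by $C_R K_R e^{-\delta(R)t}\nor{x}{}$.
\end{itemize}
Both bounds hold uniformly over $\nor{x}{}\le1$. Given $\e>0$, choose $R$ so that $CM/R<\e$, then $t$ large enough that the left-part term is below $\e$. This yields $\limsup_{t\to\infty}\nor{S(t)A^{-1}}{}\le2\e$.

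The careful bookkeeping of the Newman contour is the technical core: splitting $S(t)A^{-1}x$ into a truncated Laplace integral and the tail, and applying Cauchy's theorem on the left half. Stage 1 then becomes a corollary rather than a prerequisite, so I may skip it.
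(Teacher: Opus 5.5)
The paper does not prove this statement. It imports it from Borichev--Tomilov~\cite{BT:10}, who trace it back to Arendt--Batty~\cite{AB:88} and Batty~\cite{B:90}. Your Stage~2 is the classical Newman contour argument behind those references, and the route is sound. Because all bounds are uniform in $\nor{x}{\X}\le 1$, it gives the norm statement directly, so Stage~1 can indeed be dropped. Compared with the citation, you get a self-contained proof. You also get an explicit estimate: once $\delta(R)$ and $K_R$ are tied to $\sup_{|\eta|\le R}\nor{R(i\eta,A)}{\mathcal{L}(\X)}$, it is what Batty--Duyckaerts optimize to obtain Theorem~\ref{t:BD}, which the paper uses later.

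Two points in your write-up need repair, though neither changes the strategy.

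\emph{First, the displayed identity is wrong.} The map $\lambda\mapsto e^{\lambda t}R(\lambda,A)x$ is holomorphic near the closed region bounded by $\Gamma_R$, and $1+\lambda^2/R^2=1$ at $\lambda=0$. By Cauchy's formula your integral therefore equals $R(0,A)x=-A^{-1}x$ for every $t$, so the unspecified ``correction terms'' would carry all the content. The correct starting point uses the entire function $g_t(\lambda)=\int_0^te^{-\lambda s}S(s)x\,ds$ together with $\int_0^tS(s)x\,ds=S(t)A^{-1}x-A^{-1}x$:
\begin{equation*}
-S(t)A^{-1}x=\frac{1}{2\pi i}\int_{\Gamma_R}\bigl(R(\lambda,A)x-g_t(\lambda)\bigr)e^{\lambda t}\Bigl(1+\frac{\lambda^2}{R^2}\Bigr)\frac{d\lambda}{\lambda}.
\end{equation*}
The pieces are then estimated as follows.
\begin{itemize}
\item On the right semicircle, $R(\lambda,A)x-g_t(\lambda)=\int_t^\infty e^{-\lambda s}S(s)x\,ds$. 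Combined with $|1+\lambda^2/R^2|/|\lambda|=2|\operatorname{Re}\lambda|/R^2$ on $|\lambda|=R$, this gives $M\nor{x}{\X}/R$.
\item Let $\Gamma_R^-=\Gamma_R\cap\{\operatorname{Re}\lambda<0\}$. Move the $g_t$ part of the integral over $\Gamma_R^-$ to the left semicircle. There $\nor{g_t(\lambda)}{\X}\le M\nor{x}{\X}e^{-t\operatorname{Re}\lambda}/|\operatorname{Re}\lambda|$, which gives another $M\nor{x}{\X}/R$.
\item Only the resolvent term stays on $\Gamma_R^-$.
\end{itemize}

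\emph{Second, the resolvent term on $\Gamma_R^-$ is not bounded by $C_RK_Re^{-\delta(R)t}$.} The contour $\Gamma_R^-$ reaches $i\R$ at $\pm iR$, where $|e^{\lambda t}|=1$. Near those points, the bound via $K_R$ only decays like $t^{-2}$. It is, however, bounded by $K_R\,\e_R(t)\nor{x}{\X}$, with
\begin{equation*}
\e_R(t)=\frac{1}{2\pi}\int_{\Gamma_R^-}e^{t\operatorname{Re}\lambda}\,\frac{|1+\lambda^2/R^2|}{|\lambda|}\,|d\lambda|\underset{t\to+\infty}{\longrightarrow}0 .
\end{equation*}
The limit follows by dominated convergence: the integrand is bounded by $2/\delta$ and tends to $0$ pointwise. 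This is all your $\e$-argument needs. One gets $\nor{S(t)A^{-1}}{\mathcal{L}(\X)}\le 2M/R+K_R\,\e_R(t)$, hence $\limsup_{t\to+\infty}\nor{S(t)A^{-1}}{\mathcal{L}(\X)}\le 2M/R$ for every $R>0$.
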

For $A$ as before,  we define the continuous non-decreasing
function
\begin{equation}\label{mf}
M(\eta)=\max_{t \in [-\eta,\eta]}\nor{R(it, A)}{\mathcal{L}(\X)},\qquad \eta \ge 0,
\end{equation}
and the associated modified function
\begin{equation}\label{mflog}
M_{\log}(\eta):=M(\eta)\big(\log(1+M(\eta))+\log (1+\eta)\big),\quad \eta
\ge 0.
\end{equation}
Let $M^{-1}_{\log}$ be the inverse of $M_{\log}$ defined on
$[M_{\log}(0), +\infty).$
\begin{theorem}[Batty-Duyckaerts~\cite{BD:08}]\label{t:BD}
Let $\SS$ be a bounded $C_0$-semi\-group on a Banach
space $\X$ with generator $A,$ such that $i \mathbb R \subset
\varrho (A).$ Let the functions $M$ and $M_{\log}$ be defined by
\eqref{mf}
 and \eqref{mflog}. Then
there exist $C,B>0$ such that
\begin{equation*}
 \nor{S(t) A^{-1}}{\mathcal{L}(\X)}
\le \frac{C}{M^{-1}_{\log}(t/C)}, \quad t \ge B.
\end{equation*}
\end{theorem}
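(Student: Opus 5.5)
This is the Batty--Duyckaerts theorem, Theorem~\ref{t:BD}. The plan is to write $S(t)A^{-1}x$ through a contour integral of the resolvent, push the contour into the left half-plane near the imaginary axis, and split the resulting integral into low and high frequencies.

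\textbf{Step 1 (extend the resolvent).} Since $i\R\subset\rho(A)$, the Neumann series gives
\[
R(\lambda,A)=\sum_{k\ge0}(i\eta-\lambda)^kR(i\eta,A)^{k+1}.
\]
It converges whenever $|\operatorname{Re}\lambda|\le 1/(2M(|\eta|))$, with $\nor{R(\lambda,A)}{\mathcal{L}(\X)}\le 2M(|\eta|)$. So the resolvent extends to the region
\[
\Omega=\{\lambda=-\sigma+i\eta;\ 0\le\sigma\le 1/(2M(|\eta|))\},
\]
with bound $2M$ there.

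\textbf{Step 2 (regularized representation).} For $x\in\X$ and $\e>0$ we have
\[
S(t)A^{-1}x=\frac{1}{2\pi i}\int_{\operatorname{Re}\lambda=\e}e^{\lambda t}\,\lambda^{-1}R(\lambda,A)x\,d\lambda .
\]
This integral is not absolutely convergent. To handle it, fix a frequency cutoff $R$ and replace the kernel by $e^{\lambda t}\big(\lambda^{-1}+\lambda R^{-2}\big)$, following Newman's proof of the Ingham--Karamata Tauberian theorem as used by Batty--Duyckaerts. The extra term $\lambda R^{-2}$ is entire, so it adds nothing on the closed contour. It also makes the kernel small on the circle $|\lambda|=R$.

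\textbf{Step 3 (contour and splitting).} Take the contour made of:
\begin{itemize}
\item the arc of $|\lambda|=R$ in $\operatorname{Re}\lambda>0$;
\item the left curve $\operatorname{Re}\lambda=-\delta$ for $|\eta|\le R$, with $\delta=1/(2M(R))$;
\item short connecting pieces.
\end{itemize}
Compare the $T$-truncated Laplace transform $\int_0^T S(s)\,ds$ with the full resolvent, as in Newman's argument.
\begin{itemize}
\item On the right arc, the semigroup bound gives a contribution of size $O(1/R)$.
\item On the left curve, the bound $|e^{\lambda t}|\le e^{-\delta t}$ together with Step 1 gives a contribution of size $O\big(M(R)\,e^{-t/(2M(R))}\,\log R\big)$. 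Here the $\log R$ comes from integrating $|\lambda|^{-1}$, after a small detour around the origin, where $0\in\rho(A)$ makes the resolvent bounded.
\end{itemize}

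\textbf{Step 4 (optimize in $R$).} Choose $R$ so that
\[
M(R)\big(\log M(R)+\log R\big)\approx t/C,
\]
that is, $R=M_{\log}^{-1}(t/C)$. With this choice the left-curve term is bounded by $1/R$. Both contributions are then $O(1/R)$, which gives the claimed bound $C/M^{-1}_{\log}(t/C)$ for $t\ge B$.

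\textbf{Main obstacle.} The hard part is the contour bookkeeping in Steps 2--3, which must keep every estimate uniform in operator norm. The integral over $\operatorname{Re}\lambda=\e$ is only conditionally convergent, and the logarithmic loss has to be tracked correctly. I would follow Newman's contour argument verbatim in its vector-valued form and check that the constants depend only on $\sup_t\nor{S(t)}{}$.
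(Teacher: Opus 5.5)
The paper gives no proof of this statement; it is quoted from Batty--Duyckaerts. Their argument is the quantified Newman contour method you describe: Neumann-series extension of $R(\cdot,A)$ to a strip of width $1/(2M(R))$ with bound $2M(R)$, the kernel $e^{zt}(z^{-1}+zR^{-2})$, and the choice $R=M_{\log}^{-1}(t/C)$. So your route is the right one, and Steps 1 and 4 are fine. However, Step 2 contains an error and Step 3 omits the key step of Newman's argument.

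\textbf{Step 2.} The formula is off. Since $R(\lambda,A)A^{-1}=\lambda^{-1}\bigl(R(\lambda,A)+A^{-1}\bigr)$, the vertical-line integral of $e^{\lambda t}\lambda^{-1}R(\lambda,A)x$ represents $\int_0^t S(s)x\,ds=S(t)A^{-1}x-A^{-1}x$, not $S(t)A^{-1}x$. Also, adding $\lambda R^{-2}$ to the kernel makes no sense on an open vertical line; that term integrates to zero only over a closed contour. Drop the vertical line and start from Cauchy's formula on the closed contour $\Gamma=\partial\{|z|<R,\ \operatorname{Re}z>-\delta\}$. With $F(z)=R(z,A)x$ and the entire function $G_t(z)=\int_0^t e^{-zs}S(s)x\,ds$,
\[
S(t)A^{-1}x=G_t(0)-F(0)=\frac{1}{2\pi i}\oint_\Gamma (G_t-F)(z)\,e^{zt}\Bigl(\frac{1}{z}+\frac{z}{R^2}\Bigr)\,dz .
\]

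\textbf{Step 3.} Your left-curve estimate controls only $F$, but the integrand there is $G_t-F$, and $G_t$ is not small on $\operatorname{Re}z=-\delta$. Write $K=\sup_{s\ge0}\|S(s)\|$. The only available bound is $\|G_t(z)\|\le K\|x\|(e^{\delta t}-1)/\delta$, so $e^{-\delta t}\|G_t(z)\|$ is only bounded by $2KM(R)\|x\|$. Integrating against $|z^{-1}+zR^{-2}|$ along the segment then gives a bound of order $M(R)\log(RM(R))$, which does not decay in $t$.

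The missing step, which is the heart of Newman's trick, is to split the left part $\Gamma\cap\{\operatorname{Re}z<0\}$:
\begin{itemize}
\item Keep the $F$-integral on the left part. This gives your bound, with $\log(R/\delta)\simeq\log(RM(R))$ rather than $\log R$; no detour is needed, since the segment stays at distance $\delta$ from $0$.
\item Move the $G_t$-integral to the semicircle $\{|z|=R,\ \operatorname{Re}z<0\}$. This is allowed because $G_t$ is entire and the region in between, $\{|z|<R,\ \operatorname{Re}z<-\delta\}$, avoids the pole at $0$.
\end{itemize}
On $|z|=R$ one has $|z^{-1}+zR^{-2}|=2|\operatorname{Re}z|/R^2$ and $\|G_t(z)\|\le K\|x\|e^{-t\operatorname{Re}z}/|\operatorname{Re}z|$. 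Hence this piece is $O(\|x\|/R)$. This is the same mechanism as on the right arc, where $\|(F-G_t)(z)\|=\|\int_t^\infty e^{-zs}S(s)x\,ds\|\le K\|x\|e^{-t\operatorname{Re}z}/\operatorname{Re}z$.

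With these corrections you obtain $\|S(t)A^{-1}\|\lesssim R^{-1}+M(R)\log(RM(R))\,e^{-t/(2M(R))}$. Your choice of $R$ in Step 4, with $C$ large, then concludes.
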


Note that in the case $\alpha>0$, $M(\eta) \le C (1+\eta^{\alpha})$, $\eta\ge 0$,
the Batty-Duyckaerts result gives \begin{equation*}
 \nor{S(t) A^{-1}}{\mathcal{L}(\X)} \le C
\Bigl( \frac{\log t} {t}  \Bigr)^{\frac{1}{\alpha}}, \quad t \ge B.
\end{equation*}
\begin{theorem}[\cite{BT:10}]
Let  $\SS$ be a bounded $C_0$-semigroup on a Hilbert
space $\X$ with generator $A$ such that $i \mathbb R \subset
\varrho (A).$ Then for a fixed $\alpha > 0$ the following
conditions are equivalent:
\begin{enumerate}
\item 
\begin{eqnarray*}
 \nor{R( is , A)}{\mathcal{L}(\X)}= {\rm O} (|s|^{\alpha}), \qquad s \to \infty.
\end{eqnarray*}
\item 
\begin{eqnarray*}
 \nor{S(t)A ^{-1}}{\mathcal{L}(\X)}= {\rm O} (t^{-1/\alpha}), \qquad t \to \infty.
\end{eqnarray*}
\end{enumerate}
\end{theorem}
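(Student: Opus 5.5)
The plan is to prove the two implications separately. The implication (2)$\Rightarrow$(1) is elementary and works in any Banach space. The implication (1)$\Rightarrow$(2) is where the Hilbert structure enters, through Plancherel's theorem.

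\emph{(2)$\Rightarrow$(1).} Let $m(\tau):=\nor{S(\tau)A^{-1}}{\mathcal{L}(\X)}\le C\tau^{-1/\alpha}$, fix $s\in\R$, $x\in\X$, and set $y=R(is,A)x$. Integrating $\frac{d}{dt}\bigl(e^{-ist}S(t)y\bigr)=-e^{-ist}S(t)x$ on $[0,\tau]$ gives
\begin{equation*}
y=\int_0^\tau e^{-ist}S(t)x\,dt+e^{-is\tau}S(\tau)y .
\end{equation*}
Writing $S(\tau)y=S(\tau)A^{-1}Ay=S(\tau)A^{-1}(isy-x)$ and using $\nor{S(t)}{\mathcal{L}(\X)}\le M$, we get
\begin{equation*}
\nor{y}{\X}\le M\tau\nor{x}{\X}+m(\tau)\bigl(|s|\nor{y}{\X}+\nor{x}{\X}\bigr).
\end{equation*}
Choose $\tau=\tau(s)\sim C|s|^{\alpha}$ so that $m(\tau)|s|\le 1/2$. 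The $y$-term is then absorbed into the left-hand side, which gives $\nor{R(is,A)}{\mathcal{L}(\X)}=O(|s|^\alpha)$.

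\emph{(1)$\Rightarrow$(2).} Set $T:=A^{-\alpha}$. I will aim to prove the intermediate bound $\nor{S(t)A^{-\alpha}}{\mathcal{L}(\X)}=O(t^{-1})$, and then convert it into decay of $S(t)A^{-1}$ by the power and interpolation lemmas of Section~\ref{s:powerinterp}, exactly as in the proof of Lemma~\ref{claimalpha}.

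\emph{Step 1: resolvent bounds.} Using $i\R\subset\rho(A)$, the moment inequality and the resolvent identity, the growth hypothesis should imply that $R(is,A)A^{-\alpha}$ is uniformly bounded on $i\R$. The resolvent identity $R(\lambda)=R(is)\bigl(\id+(is-\lambda)R(\lambda)\bigr)$, for $\operatorname{Re}\lambda>0$, together with $\nor{R(\lambda)}{\mathcal{L}(\X)}\le M/\operatorname{Re}\lambda$, should then extend this to a uniform bound for $R(\lambda,A)A^{-\alpha}$ on $\overline{\C_+}$, at least for $\operatorname{Re}\lambda\le1$.

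\emph{Step 2: Plancherel.} On the Hilbert space, Plancherel gives, for $\varepsilon>0$,
\begin{equation*}
\int_0^\infty e^{-2\varepsilon t}\nor{S(t)z}{\X}^2\,dt=\frac1{2\pi}\int_\R\nor{R(\varepsilon+is,A)z}{\X}^2\,ds .
\end{equation*}
Applying it to $z=x$ and to the adjoint semigroup gives
\begin{equation*}
\int_\R\nor{R(\varepsilon+is,A)^*y}{\X}^2\,ds\le \frac{C}{\varepsilon}\nor{y}{\X}^2 .
\end{equation*}
Writing
\begin{equation*}
R(\varepsilon+is)A^{-\alpha}=R(1+is)A^{-\alpha}+(1-\varepsilon)\bigl(R(\varepsilon+is)A^{-\alpha}\bigr)R(1+is)
\end{equation*}
and using Step~1 gives
\begin{equation*}
\int_\R\nor{R(\varepsilon+is,A)A^{-\alpha}x}{\X}^2\,ds\le C\nor{x}{\X}^2 ,
\end{equation*}
uniformly in $\varepsilon\in(0,1]$.

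\emph{Step 3: the expected obstacle.} The function $t\mapsto e^{-\varepsilon t}\,t\,\langle S(t)A^{-\alpha}x,y\rangle$ has Fourier transform $\langle R(\varepsilon+is)^2A^{-\alpha}x,y\rangle=\langle R(\varepsilon+is)A^{-\alpha}x,\,R(\varepsilon+is)^*y\rangle$. Bounding its $L^1_s$ norm by Cauchy--Schwarz and taking $\varepsilon=1/t$ gives only $\nor{S(t)A^{-\alpha}}{\mathcal{L}(\X)}=O(t^{-1/2})$. By the power lemma this already yields $\nor{S(t)A^{-2\alpha}}{\mathcal{L}(\X)}=O(t^{-1})$, hence $\nor{S(t)A^{-1}}{\mathcal{L}(\X)}=O(t^{-1/(2\alpha)})$, which loses a factor $2$ in the exponent. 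Removing this loss is the main difficulty. The first thing I would try is the sharper factorisation
\begin{equation*}
tS(t)A^{-\alpha}=\int_0^t S(t-r)\,S(r)A^{-\alpha}\,dr ,
\end{equation*}
split at $r=t/2$. On each half I would put the uniformly square-integrable factor $S(\cdot)A^{-\alpha}x$, whose $L^2$ bound independent of the window follows from Step~2 with weight $e^{-\varepsilon r}$, $\varepsilon=1/t$, against the uniformly bounded semigroup. I would then pair using the adjoint semigroup applied to $A^{-\alpha}$ as well, with $(A^*)^{-\alpha}$ obeying the same resolvent bound, so that neither side produces the $\varepsilon^{-1/2}$ factor. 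If this balancing cannot be closed, I would settle for the weaker rate above as a first step and look for a separate argument to bootstrap the exponent.
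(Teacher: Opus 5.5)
The paper gives no proof of this statement (it is quoted from \cite{BT:10}), so I judge your argument on its own terms. Your proof of (2)$\Rightarrow$(1) is correct and is the standard one.

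\textbf{The gap is in (1)$\Rightarrow$(2).} Steps 1--3 only give $\nor{S(t)A^{-\alpha}}{\mathcal{L}(\X)}=O(t^{-1/2})$, hence $\nor{S(t)A^{-1}}{\mathcal{L}(\X)}=O(t^{-1/(2\alpha)})$. That is not the statement, and the repair you sketch cannot close.

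However you split $\int_0^t\langle S(r)A^{-\alpha}x,S(t-r)^*y\rangle\,dr$, one factor is a bare orbit. Its $L^2$ norm over a window of length comparable to $t$ is of order $\sqrt t$. If you also put $(A^*)^{-\alpha}$ on that factor, you are estimating $S(t)A^{-2\alpha}$, which is the weak bound you already have.

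The real obstruction is that Step 2 discards information. Take $A$ normal with eigenvalues $-1/n+in$, $n\ge 1$, and $B$ diagonal in the same basis with entries $n^{-1/2}$. Then $\int_\R\nor{R(\varepsilon+is,A)Bx}{\X}^2\,ds\le\pi\nor{x}{\X}^2$ for every $\varepsilon>0$, and the same holds for the adjoints. Yet $\nor{S(t)B}{\mathcal{L}(\X)}\sim t^{-1/2}$. So no argument that uses only uniform square integrability of $S(\cdot)A^{-\alpha}x$ and its adjoint analogue, plus boundedness of $S$ and $S^*$, can beat $t^{-1/2}$. You must use the operator bound $K:=\sup_{\operatorname{Re}\lambda>0}\nor{R(\lambda,A)A^{-\alpha}}{\mathcal{L}(\X)}<\infty$ from Step 1 pointwise in $\lambda$.

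\textbf{The missing idea} is to go one power higher in the resolvent, so that this bounded factor sits between two plain resolvents. For $\varepsilon>0$, the function $t\mapsto\frac{t^2}{2}e^{-\varepsilon t}\langle S(t)A^{-\alpha}x,y\rangle$, extended by $0$ to $t<0$, is continuous and integrable. Its Fourier transform is
\[
\langle R(\varepsilon+is,A)^3A^{-\alpha}x,y\rangle=\bigl\langle \bigl(R(\varepsilon+is,A)A^{-\alpha}\bigr)R(\varepsilon+is,A)x,\,R(\varepsilon+is,A)^*y\bigr\rangle ,
\]
whose modulus is at most $K\nor{R(\varepsilon+is,A)x}{\X}\nor{R(\varepsilon+is,A)^*y}{\X}$.

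Use the plain Plancherel bound $\int_\R\nor{R(\varepsilon+is,A)x}{\X}^2ds\le\pi M^2\varepsilon^{-1}\nor{x}{\X}^2$, the same bound for $S^*$, and Cauchy--Schwarz in $s$. This shows the transform is integrable with $L^1$ norm at most $\pi KM^2\varepsilon^{-1}\nor{x}{\X}\nor{y}{\X}$. Fourier inversion with $\varepsilon=1/t$ then gives
\[
\nor{S(t)A^{-\alpha}}{\mathcal{L}(\X)}\le eKM^2/t .
\]
Each outer resolvent costs $\varepsilon^{-1/2}$, and the total $\varepsilon^{-1}=t$ is absorbed by the factor $t^2$. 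Your Step 2 is not needed at all.

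Step 1 is correct: for integer $\alpha$, iterate $R(\lambda,A)A^{-1}=\lambda^{-1}\bigl(R(\lambda,A)+A^{-1}\bigr)$; in general use the moment inequality. The final passage to $\nor{S(t)A^{-1}}{\mathcal{L}(\X)}=O(t^{-1/\alpha})$ is the usual powers-plus-moment-inequality argument. For irrational $\alpha$ you need the moment inequality with real exponents, since Lemma~\ref{lm:inter} as stated only covers integer $m$ and would cost an arbitrarily small power of $t$.
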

\section{Fractional powers of operators and interpolation Spaces}
\label{s:powerinterp}
In this section, we define and state some classical results concerning the fractional powers of operators.  The results are taken from Komatsu~\cite{K:66,K:67}.  It will require for the resolvent that $(-\infty,0)\subset\rho(\B) $, as well as, for $\lambda>0$ in the resolvent of $\B$,  $(\lambda \id+\B)^{-1}$ and $\lambda(\lambda \id+\B)^{-1}$ are uniformly bounded operator.\\This will be applied to the operator $B=-A$, which satisfies these assumptions to define its power under Assumption~\ref{assumptions}, see \cite[Section 11]{K:66}.

We treat the operator $B^{\alpha}$ and its domain $\D(\B^\alpha)$ when $\alpha>0.$  For $B$ a closed linear operator defined in the Banach space $\X$, $\overline{\D(\B)}=\X$.

To understand the interpolation required in Theorem \ref{t:interp}, let us consider  the intermediate spaces $ \D_p^{\sigma}$, for $\sigma\in (0,\infty)$ and $1\le p \le \infty$. The Banach space  $ \D_p^{\sigma}$ defined by 
$$\D_p^{\sigma}=\{ x\in \X : \lambda^{\sigma}(\B(\lambda \id+\B)^{-1})^m x \in L^p(\X) \}$$
for $m\in \N$ such that $m>\lfloor \sigma\rfloor$,  with the norm $\|x\|_{\X}+ \|\lambda^{\sigma}\B(\lambda \id+\B)^{-1})^m x\|_{L^p(\X)}$. Here, $L^p(\X)$ is the space of $\X$ valued functions with respect to the measure $\frac{d\lambda}{\lambda}$ on $(0,+\infty)$. And the operators  $\B_{\sigma}^{\alpha}:D_1^{\sigma} \mapsto\X$ with $\sigma \geq \alpha>0$ is defined for $x\in D_1^{\sigma}$ as
    $$\B_{\sigma}^{\alpha}x=\frac{\Gamma(m)}{\Gamma(\alpha)\Gamma(m-\alpha)}\int_0^{\infty} \lambda^{\alpha -1}(\B(\lambda \id+\B)^{-1})^m x d\lambda.  $$
$D_1^{\sigma}$ and $\B_{\sigma}^{\alpha}$ as above are shown to be independent on $m>\sigma$.    
   \begin{definition}\cite[Definition 2.1.]{K:67}
For $\alpha>0$, the fractional power $\B^{\alpha}$ is the smallest closed extension of $\B_{\sigma}^{\alpha}$ for $\sigma\geq \alpha$.
    \end{definition}
This is known not to depend on $\sigma$ and to coincide with $\B^m$ when $m\in \N^*$.  It also coincides with the one defined in \cite[Equation 1.2]{K:66} by taking the closed extension of the operator defined by  
\begin{gather*}
\B^\alpha_{+}:= -\frac{\sin \pi \alpha}{\pi} \int_0^\infty \lambda^\alpha(\lambda \id +\B)^{-1}d\lambda,\end{gather*}
 for $x$ in a suitable domain, see \cite{K:67} for details. The interpolation theorem presented by Komatsu in \cite{K:67} is
\begin{theorem} \cite[Theorem 3.1]{K:67}
    The space $S(p,\theta,\X; p,\theta-1,\D(\B^m))$ for $0<\theta<1$ and $1\le p \le \infty$ coincides with $\D_p^{\theta m}(\B)$,
    where the interpolation space $S(p,\theta,\X; p,\theta-1,\D(\B^m))$ is defined by Lions-Peetre in \cite{PL:64}.
\end{theorem}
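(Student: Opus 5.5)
The plan is to prove the two continuous inclusions between $\D_p^{\theta m}(\B)$ and the Lions--Peetre space, working with the $K$-functional characterization. First, I would recall that $S(p,\theta,\X;p,\theta-1,\D(\B^m))$ coincides, with equivalent norms, with the real interpolation space $(\X,\D(\B^m))_{\theta,p}$. That space is normed by
\begin{equation*}
\nor{x}{\X}+\nor{t^{-\theta}K(t,x)}{L^p(dt/t)}, \qquad K(t,x)=\inf_{x=x_0+x_1}\left(\nor{x_0}{\X}+t\nor{x_1}{\D(\B^m)}\right).
\end{equation*}
This equivalence is part of the Lions--Peetre theory, and I take it as given. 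Throughout I use only the standing bounds $\nor{\lambda(\lambda \id+\B)^{-1}}{\mathcal{L}(\X)}\le C$ and $\nor{\B(\lambda \id+\B)^{-1}}{\mathcal{L}(\X)}\le C+1$. For large $t$ one has trivially $K(t,x)\le\nor{x}{\X}$, so only small $t$ matters. I make the change of variables $t=\lambda^{-m}$, under which $dt/t=m\,d\lambda/\lambda$ and $t^{-\theta}=\lambda^{\theta m}$.

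\textbf{Inclusion $(\X,\D(\B^m))_{\theta,p}\subset \D_p^{\theta m}$.} Fix $\lambda>0$ and take any splitting $x=x_0+x_1$ with $x_1\in\D(\B^m)$. On the first piece, $\nor{(\B(\lambda+\B)^{-1})^m x_0}{\X}\le C\nor{x_0}{\X}$. On the second piece, by commutation,
\begin{equation*}
(\B(\lambda+\B)^{-1})^m x_1=\lambda^{-m}(\lambda(\lambda+\B)^{-1})^m\B^m x_1,
\end{equation*}
so its norm is at most $C\lambda^{-m}\nor{\B^m x_1}{\X}$. Taking the infimum over splittings gives $\nor{(\B(\lambda+\B)^{-1})^m x}{\X}\le C K(\lambda^{-m},x)$. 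Multiplying by $\lambda^{\theta m}$ and taking $L^p(d\lambda/\lambda)$ norms then yields this inclusion directly.

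\textbf{Inclusion $\D_p^{\theta m}\subset(\X,\D(\B^m))_{\theta,p}$.} For the converse I choose the explicit splitting
\begin{equation*}
x_1(\lambda)=(\lambda(\lambda+\B)^{-1})^m x\in\D(\B^m), \qquad x_0(\lambda)=x-x_1(\lambda).
\end{equation*}
The $\D(\B^m)$ part is good: $\lambda^{-m}\nor{\B^m x_1(\lambda)}{\X}=\nor{(\B(\lambda+\B)^{-1})^m x}{\X}$, exactly the quantity in the $\D_p^{\theta m}$ norm. The remainder $x_0(\lambda)$ is the delicate point. Since $\id-\lambda(\lambda+\B)^{-1}=\B(\lambda+\B)^{-1}$, the remainder contains only one factor $\B(\lambda+\B)^{-1}$, not $m$ of them. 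To handle it, I would write $x_0(\lambda)$ as an integral of a derivative in $\lambda$ from $\lambda$ to $\infty$, using
\begin{equation*}
\frac{d}{d\mu}\left(\mu(\mu+\B)^{-1}\right)=\mu^{-1}\,\mu(\mu+\B)^{-1}\,\B(\mu+\B)^{-1}
\end{equation*}
and the strong convergence $\mu(\mu+\B)^{-1}\to\id$ as $\mu\to\infty$ (which uses density of $\D(\B)$). This bounds $\nor{x_0(\lambda)}{\X}$ by a Hardy-type average $\int_\lambda^\infty\nor{\B(\mu+\B)^{-1}x}{\X}\,d\mu/\mu$. Hardy's inequality for the weight $\lambda^{\theta m}$ is valid because $\theta m>0$, so the $L^p(d\lambda/\lambda)$ norm of $\lambda^{\theta m}\nor{x_0(\lambda)}{\X}$ is controlled by that of $\lambda^{\theta m}\nor{\B(\lambda+\B)^{-1}x}{\X}$.

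\textbf{Main obstacle.} The last step is the one I expect to be the real difficulty: passing from the first power $\B(\lambda+\B)^{-1}$ back to the $m$-th power $(\B(\lambda+\B)^{-1})^m$, since $\theta m$ may exceed $1$. My first attempt would be to iterate the Hardy argument $m$ times, applying the derivative formula to successive factors. If that becomes unwieldy, I would instead invoke the independence of $\D_p^{\sigma}$ from the choice of admissible $m$, stated above from Komatsu, to reduce to the smallest admissible exponent. Combining the two parts gives $K(\lambda^{-m},x)\lesssim\nor{x_0(\lambda)}{\X}+\nor{(\B(\lambda+\B)^{-1})^m x}{\X}$, and hence the reverse norm inequality, which completes the identification.
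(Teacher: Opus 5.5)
The paper gives no proof of this statement; it is quoted from Komatsu, so your argument has to stand on its own. Your inclusion $(\X,\D(\B^m))_{\theta,p}\subset\D_p^{\theta m}$ is correct.

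The reverse inclusion has a genuine gap as soon as $\theta m\ge 1$. The failure is in your choice of splitting, not in the Hardy step. Write $P_\lambda=\lambda(\lambda\id+\B)^{-1}$ and $Q_\lambda=\B(\lambda\id+\B)^{-1}=\id-P_\lambda$. Your remainder is then $x_0(\lambda)=(\id-P_\lambda^m)x=Q_\lambda(\id+P_\lambda+\dots+P_\lambda^{m-1})x$.

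Take any $x\in\D(\B^m)$ with $\B x\neq 0$. It lies in $\D_p^{\theta m}$, since $\nor{Q_\lambda^m x}{\X}\le C\lambda^{-m}\nor{\B^m x}{\X}$ and $\theta<1$. Yet $\lambda x_0(\lambda)=(\id+P_\lambda+\dots+P_\lambda^{m-1})P_\lambda\B x\to m\B x$ as $\lambda\to\infty$. Hence $\lambda^{\theta m}\nor{x_0(\lambda)}{\X}\sim m\lambda^{\theta m-1}\nor{\B x}{\X}$, which is not in $L^p(d\lambda/\lambda)$ near infinity when $\theta m>1$, or when $\theta m=1$ and $p<\infty$. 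So no estimate of $x_0(\lambda)$ can close the argument. For the same $x$, the quantity $\lambda^{\theta m}\nor{Q_\lambda x}{\X}$ to which your Hardy step reduces is infinite.

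Neither proposed remedy repairs this. Iterating Hardy cannot bound a divergent quantity. The independence of $\D_p^{\sigma}$ from the exponent concerns only exponents $>\sigma=\theta m$, so exponent $1$ is not admissible once $\theta m\ge 1$. In any case your remainder carries a single factor $Q_\lambda$, whichever admissible exponent you pick. As written, you obtain the reverse inclusion only for $\theta m<1$, and then only modulo that independence result.

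The missing idea is a splitting whose remainder carries $m$ factors of $Q_\lambda$. Since $z^m$ and $(1-z)^m$ are coprime, there are polynomials with $(1-z)^m\psi(z)+z^m\chi(z)=1$. Explicitly, $\psi(z)=\sum_{k=0}^{m-1}\binom{m-1+k}{k}z^k$ and $\chi(z)=\psi(1-z)$. Put
\[
x_0(\lambda)=\psi(P_\lambda)Q_\lambda^m x,\qquad x_1(\lambda)=\chi(P_\lambda)P_\lambda^m x\in\D(\B^m).
\]
Then $\nor{x_0(\lambda)}{\X}\le C\nor{Q_\lambda^m x}{\X}$. Since $\B^m x_1(\lambda)=\lambda^m\chi(P_\lambda)Q_\lambda^m x$, you also get $\lambda^{-m}\nor{x_1(\lambda)}{\D(\B^m)}\le C(\lambda^{-m}\nor{x}{\X}+\nor{Q_\lambda^m x}{\X})$. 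Thus
\[
K(\lambda^{-m},x)\le C\left(\nor{Q_\lambda^m x}{\X}+\lambda^{-m}\nor{x}{\X}\right).
\]
Combined with $K(t,x)\le\nor{x}{\X}$ for $t\ge 1$ and your substitution $t=\lambda^{-m}$, this gives the reverse norm inequality for every $0<\theta<1$ and $1\le p\le\infty$. It needs neither Hardy's inequality nor the independence of $\D_p^{\sigma}$ from $m$. With this replacement, your two-sided $K$-functional comparison becomes a complete, self-contained proof of the cited result.
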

\begin{lemma}\cite[Theorem 2.8]{K:67}\label{lminclusp}
    For every $\alpha>0$, 
    \begin{eqnarray*}
        \D_1^{\alpha}\subset \D(\B^{\alpha})\subset \D_{\infty}^{\alpha}.
    \end{eqnarray*}
\end{lemma}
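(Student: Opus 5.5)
We prove the two inclusions separately, working directly with the objects introduced above: the operators $\B^{\alpha}_{\sigma}$ for $\sigma\ge\alpha$, the spaces $\D^{\sigma}_p$, and the definition of $\B^{\alpha}$ as the smallest closed extension. Throughout we fix an integer $m>\alpha$. This is allowed because both $\D^{\sigma}_p$ and $\B^{\alpha}_{\sigma}$ are independent of the choice of $m>\sigma$.

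\emph{First inclusion $\D_1^{\alpha}\subset\D(\B^{\alpha})$.} Take $\sigma=\alpha$ in the definition. Let $x\in\D^{\alpha}_1$. By definition, $\lambda\mapsto\lambda^{\alpha}(\B(\lambda\id+\B)^{-1})^m x$ lies in $L^1((0,\infty),\frac{d\lambda}{\lambda};\X)$. This is exactly the statement that
\begin{equation*}
\int_0^\infty \lambda^{\alpha-1}\left(\B(\lambda \id+\B)^{-1}\right)^m x\, d\lambda
\end{equation*}
converges absolutely in $\X$, both near $0$ and near $\infty$. Hence $\B^{\alpha}_{\alpha}x$ is well defined, so $x\in\D(\B^{\alpha}_{\alpha})$. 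Since $\B^{\alpha}$ is a closed extension of $\B^{\alpha}_{\alpha}$, we get $x\in \D(\B^{\alpha})$. This inclusion is essentially tautological.

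\emph{Second inclusion $\D(\B^{\alpha})\subset\D^{\alpha}_{\infty}$.} We aim for the uniform bound
\begin{equation*}
\sup_{\lambda>0}\lambda^{\alpha}\left\|\left(\B(\lambda\id+\B)^{-1}\right)^m x\right\|_{\X}\le C\left(\|x\|_{\X}+\|\B^{\alpha}x\|_{\X}\right).
\end{equation*}
We first prove it on a core and then extend by closure. Take $x\in \D^{\sigma}_1$ with $\sigma\geq m$, for instance $x\in\D(\B^{m})$; such $x$ form a core for $\B^{\alpha}$ by the very definition of the smallest closed extension. On this core we use the factorization
\begin{equation*}
\left(\B(\lambda\id+\B)^{-1}\right)^m x=\B^{m-\alpha}(\lambda\id+\B)^{-m}\,\B^{\alpha}x.
\end{equation*}
This relies on the additivity $\B^{m-\alpha}\B^{\alpha}=\B^{m}$ and on the commutation of fractional powers with resolvents, both of which follow from the integral representations. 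The factorization reduces the problem to the operator estimate
\begin{equation*}
\|\B^{\beta}(\lambda\id+\B)^{-m}\|_{\mathcal L(\X)}\le C\lambda^{\beta-m},\qquad \beta=m-\alpha\in(0,m).
\end{equation*}
To prove this estimate, we apply the moment inequality
\begin{equation*}
\|\B^{\beta}y\|\le C\|y\|^{1-\beta/m}\|\B^m y\|^{\beta/m}
\end{equation*}
to $y=(\lambda\id+\B)^{-m}z$. The uniform bounds on $\lambda(\lambda\id+\B)^{-1}$ and $\B(\lambda\id+\B)^{-1}$ give $\|y\|\lesssim\lambda^{-m}\|z\|$ and $\|\B^m y\|\lesssim\|z\|$. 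Combining these yields $\|\B^{\beta}y\|\lesssim\lambda^{\beta-m}\|z\|$, as required. The moment inequality itself can be obtained from the Balakrishnan formula for $\B^{\beta}_+$ by splitting the $\lambda$-integral at a level $\mu$ and then optimizing in $\mu$.

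\emph{Extension to all of $\D(\B^{\alpha})$.} For each fixed $\lambda$, the map $x\mapsto\lambda^{\alpha}(\B(\lambda\id+\B)^{-1})^m x$ is bounded on $\X$. Given $x\in\D(\B^{\alpha})$, choose core elements $x_n$ with $x_n\to x$ and $\B^{\alpha}x_n\to\B^{\alpha}x$. Passing to the limit for each $\lambda$ transfers the uniform bound from the $x_n$ to $x$, so $x\in\D^{\alpha}_{\infty}$.

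The main obstacle is the rigorous justification of the factorization and of the moment inequality within Komatsu's non-selfadjoint Banach framework. This is where the additivity and commutation properties of fractional powers from \cite{K:66} are genuinely needed.
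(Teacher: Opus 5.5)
The paper does not prove this lemma; it quotes Komatsu's Theorem 2.8, so there is no in-paper argument to compare with. Your reconstruction is correct in substance, given the standard facts you invoke: additivity $\B^{m-\alpha}\B^{\alpha}y=\B^m y$ for $y\in\D(\B^m)$, commutation of $\B^\alpha$ with resolvents, and the moment inequality. Note that the first inclusion is immediate only because the closure of $\B^\alpha_\sigma$ is independent of $\sigma\ge\alpha$. You take that independence from the definition, and it is where the real content of that inclusion lies.

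Three points need correcting or can be simplified.

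\emph{The core.} The parenthetical ``for instance $x\in\D(\B^m)$'' is wrong. The inclusion $\D(\B^m)\subset\D^\sigma_1$ holds only for $\sigma<m$. For $x\in\D(\B^m)$ you only get $\lambda^\sigma\nor{(\B(\lambda\id+\B)^{-1})^{m'}x}{\X}\lesssim\lambda^{\sigma-m}\nor{\B^m x}{\X}$ as $\lambda\to\infty$. Indeed, a diagonal operator with eigenvalues $2^k$ and $x_k=2^{-km}/k$ gives $x\in\D(\B^m)\setminus\D^m_1$. What your factorization needs is the reverse inclusion $\D^\sigma_1\subset\D(\B^\sigma)\subset\D(\B^m)$ for $\sigma\ge m$. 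Then the core $\D^\sigma_1$ supplied by the definition lies in $\D(\B^m)$. $\D(\B^m)$ is also a core, being squeezed between it and $\D(\B^\alpha)$.

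\emph{The closure step is unnecessary.} For every $x\in\D(\B^\alpha)$ and $\lambda>0$, set $y=(\lambda\id+\B)^{-m}x\in\D(\B^m)$. Then
\[
(\B(\lambda\id+\B)^{-1})^m x=\B^m y=\B^{m-\alpha}\B^{\alpha}y=\B^{m-\alpha}(\lambda\id+\B)^{-m}\B^{\alpha}x,
\]
so the uniform bound holds directly on all of $\D(\B^\alpha)$.

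\emph{The moment inequality.} The first-order Balakrishnan formula only gives $\nor{\B^\beta y}{\X}\lesssim\nor{y}{\X}^{1-\beta}\nor{\B y}{\X}^{\beta}$ for $\beta\in(0,1)$. It does not give the order-$m$ inequality you state. There are two easy fixes:
\begin{itemize}
\item Take $m=\lfloor\alpha\rfloor+1$, which is allowed since $\D^\alpha_\infty$ does not depend on $m>\alpha$. Then $\beta=m-\alpha\in(0,1]$, and $\nor{\B(\lambda\id+\B)^{-m}}{\mathcal{L}(\X)}\lesssim\lambda^{1-m}$ gives the same $\lambda^{\beta-m}$ bound.
\item Alternatively, split the order-$m$ integral defining $\B^\beta_\sigma y$ with $\beta\le\sigma<m$. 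This is legitimate because $y\in\D(\B^m)\subset\D^\sigma_1$ for such $\sigma$.
\end{itemize}
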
 
In particular, combining the two previous theorems, we obtain 
 \begin{eqnarray}
 \label{sandwichinterp}
  S(1,\theta,\X; 1,\theta-1,\D(\B^m))   \subset \D(\B^{\theta m})\subset S(\infty,\theta,\X; \infty,\theta-1,\D(\B^m)).
    \end{eqnarray}
Note that $\D(\B^{\theta m})$ can sometimes be equal to the complex interpolation space, but it seems to require more restrictive conditions, see for instance \cite[Theorem 2.30, Theorem 16.3]{YagiBook}.

This allows to obtain the following interpolation result.
\begin{lemma}\label{lm:inter}
 For any $0<\theta<1$ and $m\in \N^{*}$, there exists $D_{m,\theta}$ so that for any $T\in \mathcal{L}(\X)$ satisfying
    \begin{align*}
        \nor{T}{\mathcal{L}(\X)}\leq C_0; \quad \nor{T}{\mathcal{L}(\D(B^m),\X)}\leq C_m,
    \end{align*}
   we have
     \begin{align*}
    \nor{T}{\mathcal{L}(\D(B^{\theta m}),\X)}\leq D_{m,\theta} C_0^{1-\theta}C_m^{\theta}.
    \end{align*} 
\end{lemma}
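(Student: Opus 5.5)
The plan is to derive Lemma~\ref{lm:inter} from the sandwich \eqref{sandwichinterp}. The real-interpolation functor $S(p,\theta,\cdot;p,\theta-1,\cdot)$ of Lions--Peetre is exact of exponent $\theta$, and the continuous embedding $\D(\B^{\theta m})\subset S(\infty,\theta,\X;\infty,\theta-1,\D(\B^m))$ lets us pull the interpolated bound back to $\D(\B^{\theta m})$. The target space plays no role: on it we interpolate trivially between $\X$ and $\X$, and we have $S(\infty,\theta,\X;\infty,\theta-1,\X)=\X$ with equivalent norms.

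\textbf{Step 1.} Apply the interpolation property of the Lions--Peetre spaces to the operator $T$. It is bounded $\X\to\X$ with norm $\le C_0$, and $\D(\B^m)\to\X$ with norm $\le C_m$. Hence it maps
\[
S(\infty,\theta,\X;\infty,\theta-1,\D(\B^m))\longrightarrow S(\infty,\theta,\X;\infty,\theta-1,\X)=\X
\]
with norm at most $c_\theta C_0^{1-\theta}C_m^\theta$. Concretely, I would use the equivalent $K$-method description. Write $x=x_0(t)+x_1(t)$ with $x_0\in\X$ and $x_1\in\D(\B^m)$, and bound
\[
\|Tx\|_\X\le C_0\|x_0\|_\X+C_m\|x_1\|_{\D(\B^m)}\le C_0K\!\left(\tfrac{C_m}{C_0},x\right).
\]
Since $K(s,x)\le s^\theta\|x\|_{(\X,\D(\B^m))_{\theta,\infty}}$, this gives $\|Tx\|_\X\le C_0^{1-\theta}C_m^\theta\|x\|_{\theta,\infty}$. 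The $(\theta,\infty)$ $K$-space coincides with $S(\infty,\theta,\cdot;\infty,\theta-1,\cdot)$ up to equivalent norms with constants depending only on $\theta$. The degenerate case $C_0=0$ is trivial, since then $T=0$.

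\textbf{Step 2.} Use the right-hand inclusion in \eqref{sandwichinterp}, which comes from Lemma~\ref{lminclusp} combined with Komatsu's Theorem 3.1. It should be a \emph{continuous} embedding, $\|x\|_{\theta,\infty}\le c'_{m,\theta}\|x\|_{\D(\B^{\theta m})}$. If only set inclusion is stated, continuity follows from the closed graph theorem, since both are Banach spaces continuously embedded in $\X$. Composing with Step~1 gives the claim with $D_{m,\theta}=c_\theta c'_{m,\theta}$, which is independent of $T$.

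The main obstacle is bookkeeping rather than mathematics: checking that the constants in the norm equivalences (Komatsu's $\D^{\theta m}_\infty$ norm versus the Lions--Peetre or $K$-norm, and the domain norm of $\B^{\theta m}$) depend only on $m$, $\theta$ and $\B$, and not on $T$. This is exactly what makes $D_{m,\theta}$ uniform, as needed in Lemma~\ref{claimalpha} where $T=e^{tA}$ varies with $t$.
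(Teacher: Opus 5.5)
Your proposal is correct and is essentially the paper's proof. The paper applies the Lions--Peetre interpolation property with $p=\infty$, using $S(\infty,\theta,\X;\infty,\theta-1,\X)=\X$, to get $\nor{T}{\mathcal{L}(S(\infty,\theta,\X;\infty,\theta-1,\D(B^m)),\X)}\leq C_0^{1-\theta}C_m^{\theta}$, and then takes $D_{m,\theta}$ to be the constant of the embedding $\D(B^{\theta m})\subset S(\infty,\theta,\X;\infty,\theta-1,\D(B^m))$ from \eqref{sandwichinterp}; your $K$-functional computation and your closed-graph argument for the continuity of that embedding fill in details the paper leaves implicit.
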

\bnp
By the interpolation property (see \cite[Theorem 3.1]{PL:64}) applied with $p=\infty$ and  using  that $S(\infty,\theta,\X; \infty,\theta-1,\X)=\X$, we get that 
    \begin{align*}
    \nor{T}{\mathcal{L}( S(\infty,\theta,\X; \infty,\theta-1,\D(\B^m)),\X)}\leq  C_0^{1-\theta}C_m^{\theta}.
    \end{align*} 
    The embedding \eqref{sandwichinterp} allows to conclude with $D_{m,\theta}$ the constant of the embedding $\D(\B^{\theta m})\subset S(\infty,\theta,\X; \infty,\theta-1,\D(\B^m))$.
\enp
\begin{lemma}
\label{lm:iterpower}
  For any $m\in \N$, there exists $D_m$ so that for any $T\in \mathcal{L}(\X)$, commuting with $\B$, so that 
    \begin{align*}
 \nor{T}{\mathcal{L}(\D(\B),\X)}\leq C_1,
    \end{align*}
    we have,
     \begin{align*}
    \nor{T^m}{\mathcal{L}(\D(\B^{m}),\X)}\leq D_m  C_1^{m}.
    \end{align*} 
\end{lemma}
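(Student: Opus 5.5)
The plan is to argue by induction on $m$. The key ingredients are that $T$ commutes with $\B$ and that $\B$ is invertible, which holds since $0\in\rho(A)$ under the standing assumptions with $\B=-A$. Because of invertibility, the graph norm $\nor{x}{\D(\B^k)}$ is equivalent to $\nor{\B^k x}{\X}$: indeed $\nor{x}{\X}\leq \nor{\B^{-k}}{\mathcal{L}(\X)}\nor{\B^kx}{\X}$. Likewise, $\nor{\B^j x}{\X}$ for $j\leq k$ is controlled by $\nor{\B^kx}{\X}$. I fix these equivalence constants once, depending only on $m$ and $\B$; they are absorbed into $D_m$. The case $m=0$ is trivial, since $T^0=\id$ and $D_0=1$. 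The case $m=1$ is the hypothesis.

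The first step is to check that commutation gives the right invariance. If $x\in\D(\B^{k})$, then $Tx\in \D(\B^{k})$ and $\B^k T x = T\B^k x$. The meaning of "commuting with $\B$" is $T\D(\B)\subset\D(\B)$ and $\B T=T\B$ on $\D(\B)$. Iterating $k$ times gives the claim for $\B^k$.

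The second step is to rewrite the hypothesis. It is equivalent, up to constants, to
\begin{equation*}
\nor{Ty}{\X}\lesssim C_1 \nor{\B y}{\X},\quad \forall y\in \D(\B),
\end{equation*}
or, with $y=\B^{-1}z$, to $\nor{T\B^{-1}}{\mathcal{L}(\X)}\lesssim C_1$.

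The third step is the computation. For $x\in\D(\B^m)$, write $x=\B^{-m}z$ with $z=\B^m x$. Commutation gives $T\B^{-1}=\B^{-1}T$ on $\X$: apply $\B^{-1}$ to $\B T\B^{-1}z = T z$, which is valid because $\B^{-1}z\in\D(\B)$. Hence
\begin{equation*}
T^m x = T^m \B^{-m} z = (T\B^{-1})^m z,
\end{equation*}
and therefore
\begin{equation*}
\nor{T^m x}{\X}\leq \nor{T\B^{-1}}{\mathcal{L}(\X)}^m\nor{\B^m x}{\X}\lesssim C_1^m\nor{x}{\D(\B^m)}.
\end{equation*}
The implicit constant comes only from the norm equivalences, so it depends on $m$ and $\B$ but not on $T$. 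This gives $D_m$.

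The only delicate point is the main obstacle, namely justifying that $T\B^{-1}$ and $\B^{-1}$ commute and that the norm equivalences hold. Both rest on $0\in\rho(\B)$. If one wanted to avoid invertibility, I would instead write $\B_1=\id+\B$, which is invertible by the Komatsu assumptions with $\lambda=1$. I would then use $\nor{x}{\D(\B^m)}\simeq\nor{(\id+\B)^m x}{\X}$ and run the same argument with $(\id+\B)^{-1}$ in place of $\B^{-1}$.
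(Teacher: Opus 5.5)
Your argument is correct, but it takes a different route from the paper's.

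\textbf{The paper's route.} The paper uses a genuine induction that peels off one factor of $T$ at a time. It writes $\|T^{m+1}x\|_{\X}\le D_mC_1^m\|Tx\|_{\D(B^m)}$, commutes $B^m$ past $T$, and applies the hypothesis to both $x$ and $B^mx$. This produces the terms $\|Bx\|_{\X}$ and $\|B^m x\|_{\X}$. These are absorbed into $\|x\|_{\D(B^{m+1})}$ through Lemma~\ref{equivnorm}, i.e.\ Komatsu's control of intermediate powers by the graph norm. Invertibility of $B$ is never used.

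\textbf{Your route.} You use $0\in\rho(B)$, which is available here since $B=-A$ and $i\R\subset\rho(A)$. This lets you factor $T^m=(TB^{-1})^mB^m$ on $\D(B^m)$, with
$\|TB^{-1}\|_{\mathcal{L}(\X)}\le C_1(1+\|B^{-1}\|_{\mathcal{L}(\X)})$, and conclude by submultiplicativity. This gives the explicit constant $D_m=(1+\|B^{-1}\|_{\mathcal{L}(\X)})^m$. It needs only the trivial bound $\|B^mx\|_{\X}\le\|x\|_{\D(B^m)}$, so Lemma~\ref{equivnorm} is not needed at all. It is a cleaner argument in the setting where the lemma is actually applied.

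\textbf{The trade-off.} Your main argument depends on invertibility, which is not among the Komatsu-type hypotheses under which the appendix nominally states the lemma. Your fallback with $\id+B$ restores generality. However, it then requires $\|(\id+B)^mx\|_{\X}\lesssim\|x\|_{\D(B^m)}$, which means controlling the intermediate powers $B^jx$ for $0<j<m$. That is exactly Lemma~\ref{equivnorm} again, so in the general case both proofs rest on the same input.

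A minor point: although you announce an induction, your third step is a direct argument valid for all $m$ at once.
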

\bnp
The result is obtained by iteration. $m=0$ is direct with $D_0=1$. Assume the result is true at step $m$.  For $x\in \D(\B^{m+1})$, we write by iteration step and using the assumption on $T$
\begin{align*}
    \nor{T^{m+1}x}{\X}=&\nor{T^{m}Tx}{\X}\leq D_m  C_1^{m}\nor{Tx}{\D(\B^{m})}\leq   D_m C_1^{m}\left(\nor{Tx}{\X}+\nor{\B^{m}Tx}{\X}\right)\\
    \leq  &  D_m  C_1^{m}\left(\nor{Tx}{\X}+\nor{T \B^{m}x}{\X}\right)\\
    \leq &  D_m  C_1^{m+1}\left(\nor{x}{\D(\B)}+\nor{ \B^{m}x}{\D(\B)}\right)\\
    \leq &  D_m  C_1^{m+1}\left(\nor{x}{\X}+\nor{\B x}{\X}+\nor{\B^{m}x}{\X}+\nor{\B^{m+1}x}{\X}\right)\\
    \leq & D_{m+1 } C_1^{m+1}\nor{x}{\D(\B^{m+1})},
\end{align*}
where $D_{m+1}$ is a product of $D_m$ and some constants given by Lemma~\ref{equivnorm} below.
\enp
\begin{lemma}\label{equivnorm}
For any $m\in \N^*$ and $0\leq k\leq m$ there exists $C>0$ so that 
\begin{equation*}
    \nor{\B^k x}{\X}\leq C \nor{x}{\D(\B^{m})}, \quad \forall x\in \D(\B^{m}).
\end{equation*}
Moreover, if $\alpha>\beta>0$, then $\D(\B^{\alpha})\subset \D(\B^{\beta})$.
\end{lemma}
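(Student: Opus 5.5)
The first claim, for integer $0\le k\le m$, I would prove by induction on $m$, using only that $\B$ is closed and $0\in\rho(\B)$. Recall that $\D(\B^m)$ carries the graph norm $\nor{x}{\X}+\nor{\B^m x}{\X}$. Since $\B^{-1}\in\mathcal{L}(\X)$ and $\B^{-m}$ maps $\X$ onto $\D(\B^m)$, every $x\in\D(\B^m)$ can be written $x=\B^{-m}y$ with $y=\B^m x$. Then $\B^k x=\B^{-(m-k)}\B^m x$, and hence
\begin{equation*}
\nor{\B^k x}{\X}\le \nor{\B^{-1}}{\mathcal{L}(\X)}^{m-k}\nor{\B^m x}{\X}\le C\nor{x}{\D(\B^m)}.
\end{equation*}
This argument does not even require an induction. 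It does use invertibility of $\B=-A$, which holds because $0\in\rho(A)$ under Assumption~\ref{assumptions}. The case $k=0$ is trivial.

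If one prefers not to use $0\in\rho(\B)$, there is an alternative route through a Landau–Kolmogorov-type moment inequality, $\nor{\B x}{\X}\le C\nor{x}{\X}^{1/2}\nor{\B^2x}{\X}^{1/2}$, which one obtains via the resolvent identity $x=\lambda(\lambda+\B)^{-1}x+(\lambda+\B)^{-1}\B x$ together with the uniform bounds on $\lambda(\lambda+\B)^{-1}$. Iterating it gives the general case. Since $0\in\rho(A)$ is available, I would use the first, simpler argument.

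For the inclusion $\D(\B^\alpha)\subset\D(\B^\beta)$ when $\alpha>\beta>0$, I would use Komatsu's sandwich \eqref{sandwichinterp} together with Lemma~\ref{lminclusp}. First, the spaces $\D_p^\sigma$ are decreasing in $\sigma$: take $\alpha>\beta$ and a common integer $m>\alpha$. If $x\in\D_\infty^\alpha$, then $\lambda^\alpha(\B(\lambda+\B)^{-1})^m x$ is bounded. For $\lambda\ge1$ this gives $\lambda^\beta\nor{(\B(\lambda+\B)^{-1})^m x}{\X}\le C\lambda^{\beta-\alpha}$, which is integrable against $d\lambda/\lambda$ on $[1,\infty)$. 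For $\lambda\le1$, we use $\B(\lambda+\B)^{-1}=\id-\lambda(\lambda+\B)^{-1}$, which is uniformly bounded, so $\lambda^\beta\nor{(\B(\lambda+\B)^{-1})^m x}{\X}\le C\lambda^\beta\nor{x}{\X}$, again integrable near $0$. Hence $\D_\infty^\alpha\subset\D_1^\beta$. Lemma~\ref{lminclusp} then gives
\begin{equation*}
\D(\B^\alpha)\subset\D_\infty^\alpha\subset\D_1^\beta\subset\D(\B^\beta).
\end{equation*}

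The main subtle point is checking that the definition of $\D_p^\sigma$ is independent of the choice of $m$, so that a common $m$ can be used for both $\alpha$ and $\beta$. The paper asserts this independence, citing Komatsu. The continuity of the inclusion follows from the closed graph theorem, or directly from the estimates above.
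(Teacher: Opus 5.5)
Your proof is correct, but it takes a different route from the paper. The paper gives no argument of its own: it simply invokes Komatsu~\cite[Theorem 2.5]{K:66}, with the remarks on equivalence of norms on p.~294 there. That result is stated for general non-negative operators in Komatsu's sense, where $0$ may lie in $\sigma(\B)$. In that generality, controlling $\nor{\B^k x}{\X}$ by $\nor{x}{\X}+\nor{\B^m x}{\X}$ is genuinely a moment-inequality statement, essentially the alternative you sketch.

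For the first claim you use invertibility instead. This is legitimate here, since the lemma is only applied with $\B=-A$, and $0\in i\R\subset\rho(A)$ by Assumption~\ref{assumptions}. It turns the claim into a one-line computation via $\B^k x=\B^{-(m-k)}\B^m x$, at the cost of generality.

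For the inclusion, your chain $\D(\B^{\alpha})\subset\D^{\alpha}_{\infty}\subset\D^{\beta}_{1}\subset\D(\B^{\beta})$ is sound. Splitting at $\lambda=1$ gives integrability against $d\lambda/\lambda$: at infinity you use the factor $\lambda^{\beta-\alpha}$, and near $0$ the uniform bound on $\B(\lambda\id+\B)^{-1}=\id-\lambda(\lambda\id+\B)^{-1}$. The same estimates also give continuity of the embedding. This step still rests on Komatsu, through Lemma~\ref{lminclusp} and the independence of $\D^{\sigma}_{p}$ from $m$, so it is not more self-contained than the citation. Its merit is that it uses only facts the paper already records in its appendix.

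Since you use $0\in\rho(\B)$ anyway, an equally short alternative is $\D(\B^{\alpha})=\operatorname{Ran}\B^{-\alpha}$ together with $\B^{-\alpha}=\B^{-\beta}\B^{-(\alpha-\beta)}$. That route likewise depends on Komatsu's theory of negative powers.
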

This is a direct consequence of \cite[Theorem 2.5]{K:66}, see the comments on page 294, stating the equivalence of the norms. 
\section*{Acknowledgement}
The authors would like to thank one more time Jean-Michel Coron for his kindness and everything he has done for the community of control over the  years. 

 They would also like to thank Matthieu L\'eautaud for bringing \cite{L:23} to their attention, which led to a cleaner presentation of their result for the damped wave equation in the case  $\d\M=\emptyset$. Finally, the authors are very grateful to the anonymous referees, whose careful reading substantially improved the presentation of the paper.
\bibliographystyle{plain}

\bibliography{biblio2}

\end{document}